\newtheorem{theorem}[equation]{Theorem}
\newtheorem{corollary}[equation]{Corollary}
\newtheorem{proposition}[equation]{Proposition}
\newtheorem{definition}[equation]{Definition}   
\newtheorem{definitions}[equation]{Definitions}
\newtheorem{remark}[equation]{Remark}
\newtheorem{example}[equation]{Example}
\newtheorem{question}[equation]{Question}
\numberwithin{equation}{section}
\numberwithin{equation}{section}
\newcommand{\AR}{\mathsf{A}}
\newcommand{\BR}{\mathsf{B}}
\newcommand{\CR}{\mathsf{C}}
\newcommand{\DR}{\mathsf{D}}
\newcommand{\ER}{\mathsf{E}}
\newcommand{\FR}{\mathsf{F}}
\newcommand{\GR}{\mathsf{G}}
\renewcommand{\dim}{\mathsf{dim}}
\renewcommand{\deg}{\mathsf{deg}}
\renewcommand{\det}{\mathsf{det}}
\renewcommand{\ker}{\mathsf{ker}}
\newcommand{\lang}{\langle}
\newcommand{\rang}{\rangle}
\newcommand{\cox}{h}
\title{Chip firing on Dynkin diagrams and McKay quivers}
\author{Georgia Benkart}
\address{Department of Mathematics,
University of Wisconsin-Madison,
480 Lincoln Dr.,
Madison, WI 53706}
\email{benkart@math.wisc.edu}
\author{Caroline Klivans}
\address{Applied Mathematics \& Computer Science,
Brown University,
182 George St.,
Providence RI 02906}
\email{Caroline\_Klivans@brown.edu}
\author{Victor Reiner}
\address{School of Mathematics,
University of Minnesota,
206 Church St. SE,
Minneapolis, MN 55455}
\email{reiner@math.umn.edu}
\keywords{Chip firing, toppling, sandpile, avalanche-finite matrix, Z-matrix, M-matrix, McKay correspondence,  McKay quiver, root system, Dynkin diagram, minuscule weight, highest root, numbers game, abelianization}
\subjclass{
17B22, 
05E10, 
14E16  
}
\def\ZZ{\mathbb{Z}}
\def\NN{\mathbb{N}}
\def\CC{\mathbb{C}}
\def\RR{\mathbb{R}}
\def\QQ{\mathbb{Q}}
\def\zero{\mathbf{0}}
\def\one{\mathbf{1}}
\def\AAA{\mathfrak{A}}
\def\SSS{\mathfrak{S}}
\def\GGG{\mathfrak{G}}
\def\ggg{\mathfrak{g}}
\def\GL{\mathsf{GL}}
\def\SL{\mathsf{SL}}
\def\SO{\mathsf{SO}}
\def\SU{\mathsf{SU}}
\def\PSU{\mathsf{PSU}}
\def\exx{\mathsf{e}}
\def\KC{\mathsf{K}(C)}
\def\KG{\mathsf{K}(\gamma)}
\def\ke{\mathsf{ker}}
\def\Hom{\mathsf{Hom}}
\def\Irr{\mathsf{Irr}}
\def\coker{\mathsf{coker}}
\def\im{\mathsf{im}}
\def\height{\mathsf{ht}}
\def\stab{\mathsf{stab}}
\def\supp{\mathsf{supp}}
\def\rad{\mathsf{rad}}
\def\ab{\mathsf{ab}}
\def\ad{\mathsf{ad}}
\def\sc{\mathsf{sc}}
\begin{document} 

\begin{abstract}
Two classes of avalanche-finite matrices and their critical groups (integer cokernels) 
are studied from the viewpoint of chip-firing/sandpile dynamics, 
 namely,  the Cartan matrices of finite root systems and  the McKay-Cartan matrices for 
finite subgroups $G$ of general linear groups.
In the root system case, the recurrent and superstable configurations 
are identified explicitly
and are related to minuscule dominant weights.  In the McKay-Cartan case for finite subgroups of
the special linear group, the cokernel is 
related to the abelianization of the subgroup $G$.  In the special 
case of the classical McKay correspondence, 
the critical group and  the abelianization are shown to be isomorphic.
\end{abstract}

\maketitle


\begin{section}{Introduction}
\label{intro-section}\end{section}

The chip-firing model is a discrete dynamical system classically
modeling the distribution of a discrete commodity on a graphical
network by a Laplacian matrix.  Much early work was done in the context of the {\it abelian
sandpile model}, 
studying the avalanching dynamics of granular
flow on a grid.  The long-term stabilizing configurations exhibit a
phenomenon deemed {\it self-organized criticality}~\cite{BTW, Dhar}.  
Chip-firing dynamics and long-term behavior of the model have been
related to areas such as economic models~\cite{Biggs},
energy minimization~\cite{BakerShokrieh}, and face numbers of
matroids~\cite{Merino}.   

Recent work has established 
that much of the good behavior of abelian sandpiles 
or chip-firing models  
of graphs and directed graphs (digraphs) generalizes naturally
to certain integer matrices that have
been called {\it avalanche-finite matrices}  or {\it nonsingular $M$-matrices}  
(see, e.g.,  Guzm\'an and Klivans \cite{GuzmanKlivans} or the paper \cite{PostnikovShapiro} by Postnikov
and Shapiro,  where they
are called toppling matrices).
Assume $C$ is such a matrix in $\ZZ^{\ell \times \ell}$, and $C^t$ is its transpose.
 The  {\it critical group}  of $C$ is 
$$
\KC:=\coker(C^t: \ZZ^\ell \rightarrow \ZZ^\ell)=\ZZ^\ell / \im(C^t).
$$
The stable configurations 
alluded to above are known as the {\it critical} or {\it recurrent} configurations,  and they form a  system of distinguished 
coset representatives  for $\im(C^t)$ in $\ZZ^\ell$. 
Closely related is an alternative system of coset representatives called
the {\it superstable} configurations.
Basic facts on avalanche-finite matrices,  
recurrent and superstable configurations are reviewed in Section~\ref{chip-firing-section}.

With this in mind,  the present paper considers two kinds of matrices previously unidentified as avalanche-finite matrices.
The first is the 
Cartan matrix $C$ for a finite, crystallographic, irreducible root system,
studied in Section~\ref{root-system-section}.
Here the critical group $\KC$ has an auxiliary interpretation
as the {\it fundamental group} of the root system, that is,
the weight lattice modulo the root lattice.
Our  first main result, proven in Section~\ref{Dynkin-chips-section}, 
identifies  the superstable and recurrent configurations of $\KC$
in terms of the {\it Weyl vector} $\varrho$ 
(the half-sum of all positive roots) 
and the {\it minuscule dominant weights} $\lambda$.  \medskip

\begin{theorem}
\label{Cartan-recurrents-theorem}
For  the Cartan matrix $C$ of a  finite,  crystallographic,  irreducible
root system,  
\begin{itemize}
\item[{\rm (i)}] the superstable configurations are the zero vector 
$\mathbf{0}$ and  the minuscule dominant weights $\lambda$;
\item[{\rm (ii)}] the recurrent configurations are $\varrho$ and $\varrho-\lambda$
for all minuscule dominant weights $\lambda$.
\end{itemize}
\end{theorem}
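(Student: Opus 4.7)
The plan is to translate chip-firing for $C$ into root-system language, where superstability becomes minimality in a coset of $P/Q$ under the root order, and then to apply the standard characterization of minuscule weights via the highest-weight modules $V(\lambda)$. Identifying $\ZZ^\ell$ with the weight lattice $P$ via the fundamental-weight basis $\omega_1,\ldots,\omega_\ell$, the cone $\NN^\ell$ becomes the set $P^+$ of dominant weights, and a direct calculation shows that the $i$-th column of $C^t$ is the simple root $\alpha_i=\sum_jC_{ij}\omega_j$. Consequently a firing $\sigma\mapsto\sigma-C^t\mathbf{c}$ with $\mathbf{c}\in\NN^\ell\setminus\{\mathbf{0}\}$ subtracts a nonzero element $\sum_i c_i\alpha_i\in Q^+$ of the positive-root cone from the corresponding weight $\lambda\in P^+$, and $\lambda$ is superstable exactly when no such subtraction keeps it in $P^+$; i.e., $\lambda$ is minimal within its coset of $P/Q$ under the root order.

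With this reformulation, part~(i) is the assertion that the minimal dominant weights are $\mathbf{0}$ together with the minuscule dominant weights. The cleanest route uses the classical description of the dominant weights of the irreducible module $V(\lambda)$: they are exactly the $\mu\in P^+$ with $\mu\preceq\lambda$ (in root order) and $\mu\equiv\lambda\pmod{Q}$. Hence $\lambda$ is minimal in its $P/Q$-coset if and only if $\lambda$ is the only dominant weight of $V(\lambda)$, if and only if the weights of $V(\lambda)$ form the single Weyl orbit $W\lambda$. For $\lambda\neq\mathbf{0}$ this last condition is the defining property of minuscule dominant weights; for $\lambda=\mathbf{0}$ it holds trivially since $V(\mathbf{0})$ is one-dimensional. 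This yields part~(i). Verifying this on the small non-simply-laced case $B_2$ (where the superstable configurations turn out to be $\mathbf{0}$ and $\omega_2$, the spin weight) provides a useful reality check on the convention.

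Part~(ii) follows from part~(i) via the standard involution between superstable and recurrent configurations reviewed in Section~\ref{chip-firing-section}, which sends a superstable $\sigma$ to the recurrent $(C_{11}-1,\ldots,C_{\ell\ell}-1)-\sigma$. Since every diagonal entry of a Cartan matrix equals $2$, the maximal stable configuration $(1,\ldots,1)$ equals $\sum_i\omega_i=\varrho$, the Weyl vector, and part~(i) then yields the recurrent configurations $\varrho$ and $\varrho-\lambda$ as $\lambda$ ranges over the minuscule dominant weights. The main obstacle I expect is not in the root-system argument itself but in pinning down the convention relating $C$, $C^t$, and the simple roots cleanly enough that the identification ``$i$-th column of $C^t$ equals $\alpha_i$'' is unambiguous in the non-simply-laced types; once that bookkeeping is set, the rest is essentially a direct application of the weight theory of $V(\lambda)$.
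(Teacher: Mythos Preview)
Your proof is correct and matches the paper's first proof almost exactly: both translate superstability into root-order minimality among dominant weights within a $P/Q$-coset, invoke the standard characterization of minuscule weights (the paper cites this as Proposition~\ref{minuscule-dominant-properties}(d), while you justify it via the dominant weights of $V(\lambda)$), and then deduce (ii) from (i) using the superstable/recurrent duality with $v^C=\one=\varrho$. The paper also supplies a second, constructive proof of (ii) via explicit toppling sequences using the burning configurations $\tilde{\alpha}$ and $\alpha^*$, which yields extra combinatorial content (Remarks~\ref{max-chains-in-root-order-remark} and~\ref{numbers-game-remark}) that your approach does not.
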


The second kind of avalanche-finite matrix is
what we refer to as the
\emph{McKay-Cartan matrix} $C$ associated to an $n$-dimensional  faithful 
representation $\gamma: G \  \hookrightarrow \ \GL_n(\CC)$ of a finite group $G$.
Assume 
$\{\one_G=\chi_0, \chi_1,\ldots, \chi_\ell\}$ is the set of irreducible complex characters of $G$, and   
$
\chi_\gamma \cdot  \chi_i = \sum_{j=0}^\ell m_{ij} \chi_j.
$ 
Then $C$ is the $\ell \times \ell$ matrix  with $(i,j)$-entry  given by 
$c_{ij}:=n \delta_{ij} - m_{ij}$ for $1 \leq i,j \leq \ell$, where $\delta_{ij}$ is the Kronecker delta.
Our second main result is established in Section~\ref{McKay-quiver-section}. \medskip

\begin{theorem}
\label{McKay-Cartan-matrices-are-toppling-theorem}
The McKay-Cartan matrix $C$ of a faithful 
representation $\gamma: G \  \hookrightarrow \ \GL_n(\CC)$ of a finite group G
is  an avalanche-finite matrix.
\end{theorem}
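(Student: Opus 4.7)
The plan is to use the standard characterization that a $Z$-matrix of the form $sI - A$ with $A$ entrywise non-negative is avalanche-finite precisely when $s > \rho(A)$, where $\rho(A)$ denotes the Perron--Frobenius spectral radius. By construction $C = nI - M$ with $M := (m_{ij})_{1 \le i,j \le \ell}$ having non-negative integer entries, so $C$ is a $Z$-matrix, and the task reduces to showing $\rho(M) < n$.

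The key idea is to compare $M$ with the \emph{full} McKay matrix $\widetilde M := (m_{ij})_{0 \le i,j \le \ell}$, which also includes the trivial character $\chi_0$. Taking dimensions in the defining relation $\chi_\gamma \cdot \chi_i = \sum_{j=0}^\ell m_{ij} \chi_j$ shows that the strictly positive vector $d := (\dim \chi_0, \dim \chi_1, \ldots, \dim \chi_\ell)^t$ satisfies $\widetilde M d = n d$. Since $\widetilde M$ is entrywise non-negative, Perron--Frobenius yields $\rho(\widetilde M) = n$. Because $M$ is obtained from $\widetilde M$ by deleting the row and column indexed by $\chi_0$, it then suffices to show that $\widetilde M$ is an \emph{irreducible} non-negative matrix, for then every proper principal submatrix has strictly smaller spectral radius.

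Irreducibility of $\widetilde M$ amounts to strong connectedness of the McKay quiver of $\gamma$, which I would derive from the Burnside--Brauer theorem on faithful characters: if $\chi_\gamma$ is faithful then every irreducible character of $G$ appears as a constituent of some tensor power $\chi_\gamma^k$. In matrix terms, this says that for every $j$ there exists $k$ with $(\widetilde M^k)_{0,j} > 0$, so there is a directed path from vertex $0$ to vertex $j$ in the McKay quiver. For the reverse direction, apply Burnside--Brauer to the dual irreducible $\overline{\chi_j}$ to find $l$ with $\overline{\chi_j}$ appearing in $\chi_\gamma^l$, and then invoke Frobenius reciprocity $\langle \chi_\gamma^l \chi_j, \chi_0 \rangle = \langle \chi_\gamma^l, \overline{\chi_j} \rangle$ to conclude $(\widetilde M^l)_{j,0} > 0$. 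Composing paths through vertex $0$ yields strong connectedness between any two vertices.

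The main obstacle is this third step: extracting directed paths in \emph{both} directions of the McKay quiver from Burnside--Brauer, which requires dualizing characters correctly in order to reverse arrows. Once $\widetilde M$ is known to be irreducible, the strict Perron--Frobenius inequality for proper principal submatrices immediately gives $\rho(M) < \rho(\widetilde M) = n$, so $C = nI - M$ is a nonsingular $M$-matrix, hence avalanche-finite.
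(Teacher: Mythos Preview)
Your argument is correct, and it takes a genuinely different route from the paper's proof. The paper invokes a result of Steinberg that the quadratic form $\tilde{Q}(y)=y^t\tilde{C}y$ on $\RR^{\ell+1}$ is nonnegative semidefinite with radical exactly $\RR\delta^{(e)}$ when $\gamma$ is faithful; restricting to the hyperplane $y_0=0$ shows $Q(x)=x^tCx$ is positive definite. Since $C$ need not be symmetric, the paper then uses the contragredient involution $i\mapsto i^*$ (so $C^t=P^tCP$ for a permutation matrix $P$) to deduce that $Q'(x)=x^tC^tx$ is also positive definite, whence $C+C^t$ is positive definite and criterion~(iii) of Proposition~\ref{toppling-equivalences-prop} applies.

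Your proof instead establishes criterion~(iv) (or rather the equivalent spectral-radius characterization of nonsingular $M$-matrices) directly via Perron--Frobenius: the dimension vector $d$ is a positive right eigenvector of $\widetilde{M}$ with eigenvalue $n$, so $\rho(\widetilde{M})=n$; Burnside--Brauer plus the dualization trick gives strong connectedness of the McKay quiver, hence irreducibility of $\widetilde{M}$; and the strict Perron inequality for proper principal submatrices yields $\rho(M)<n$. This avoids both Steinberg's quadratic form and the contragredient symmetry, trading them for standard Perron--Frobenius theory and Burnside--Brauer (the latter of which the paper also uses, but only later when constructing burning configurations). Your approach is arguably more self-contained; the paper's approach, on the other hand, makes visible the positive-definiteness structure that underlies the affine/finite Cartan matrix analogy.
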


As we discuss in Section \ref{rng-section},  the
abelian group $\KC$ coming from the McKay-Cartan
matrix $C$ has a {\it multiplicative} structure 
as a {\it rng} (=ring without unit).
This results from viewing $\KC$ as an {\it ideal} inside
the quotient ring  which is the 
{\it (virtual) representation ring} $R(G)$ of  $G$ with 
the principal ideal generated by $n \cdot 1 - \chi_\gamma$
factored out. Here $n$ is the degree $\gamma$,
and $\chi_\gamma$ is its character.

Section~\ref{abelianization-section} proves
the following result for faithful representations 
$\gamma:G  \ \hookrightarrow \  \SL_n(\CC)$ into special linear groups, relating $\KC$
to the \emph{abelianization} $G^\ab = G/[G,G]$   and
its {\it Pontrjagin dual} or {\it character group} $\widehat{G^{\ab}}$.

\begin{theorem} \label{abelianization-theorem}  
{For a faithful 
representation $\gamma:G \   \hookrightarrow \ \SL_n(\CC)$ of a finite group $G$,    
there is a surjection  $\KC \twoheadrightarrow \widehat{G^{\ab}}$.}
\end{theorem}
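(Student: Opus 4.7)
The plan is to build the surjection explicitly via determinants of representations. Identifying $\widehat{G^{\ab}}$ with the (multiplicatively written) group $L$ of one-dimensional characters of $G$ — these being precisely the characters trivial on the commutator subgroup — I would define $\Phi : \ZZ^\ell \to L$ by $e_i \mapsto \det \chi_i$ for $1 \leq i \leq \ell$, where $\det \chi_i$ denotes the one-dimensional character $g \mapsto \det \rho_i(g)$ for any representation $\rho_i$ affording $\chi_i$.

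The central step will be to check that $\Phi$ factors through $\KC$, i.e.\ that every row of $C$ lies in $\ker \Phi$. Row $i$ demands the relation
\[
(\det \chi_i)^n \ = \ \prod_{j=1}^\ell (\det \chi_j)^{m_{ij}} \quad \text{in } L.
\]
To derive it, I would apply $\det$ to both sides of the genuine isomorphism of representations $\gamma \otimes \rho_i \cong \bigoplus_{j=0}^\ell \rho_j^{\oplus m_{ij}}$, using the standard identity $\det(V \otimes W) = (\det V)^{\dim W}(\det W)^{\dim V}$. The left side becomes $(\det \gamma)^{\dim \rho_i}(\det \chi_i)^n$; the right side becomes $\prod_{j=0}^\ell (\det \chi_j)^{m_{ij}}$, with the $j=0$ factor dropping out since $\det \chi_0 = 1$. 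At this point the hypothesis $\gamma(G)\subseteq \SL_n(\CC)$ enters decisively: it forces $\det \gamma = 1 \in L$, which cancels the $(\det \gamma)^{\dim \rho_i}$ factor and leaves exactly the required identity.

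Once $\Phi$ descends to $\overline{\Phi} : \KC \to L$, surjectivity is immediate: every one-dimensional irreducible character $\chi_i$ of $G$ (with $i \geq 1$) is its own determinant, so $\Phi(e_i) = \chi_i$, and as $i$ ranges over the indices of the degree-one irreducibles these exhaust all of $L$. The main ``obstacle'' is conceptual rather than technical: one has to recognize that the $\SL_n$ hypothesis is precisely what is needed to kill the spurious $(\det \gamma)^{\dim \rho_i}$ factor, and that $\widehat{G^{\ab}}$ is the natural target for the determinant operation on irreducible characters. Without the $\SL_n$ assumption the same recipe would only produce a map to $\widehat{G^{\ab}}$ modulo the subgroup generated by $\det \gamma$.
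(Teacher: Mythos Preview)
Your proposal is correct and follows essentially the same approach as the paper: the paper defines the same determinant map $e_i \mapsto \det_{\chi_i}$, applies $\det$ to both sides of $\chi_\gamma \cdot \chi_i = \sum_j m_{ij}\chi_j$ via the identity $\det(U_1\otimes U_2) = (\det U_1)^{\dim U_2}(\det U_2)^{\dim U_1}$, and uses $\det_\gamma = \one_G$ from the $\SL_n$ hypothesis to conclude. The only cosmetic difference is that the paper works on $\ZZ^{\ell+1}$ using the presentation $\KG = \ZZ^{\ell+1}/(\ZZ e_0 + \im(\tilde{C}^t))$ and separately kills $e_0$, whereas you work directly on $\ZZ^\ell$ and absorb the $j=0$ term via $\det\chi_0 = 1$; the content is the same.
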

 
Section~\ref{abelianization-section} discusses examples,
including the motivating case where all our results apply: 
McKay's original correspondence \cite{McKay} for  finite subgroups $G$ of $\SL_2(\CC)$. 
McKay observed  that  the extended matrix $\tilde C = (c_{ij})$, with $c_{ij} = n \delta_{ij} - m_{ij}$ for $0 \leq i,j \leq \ell$,
for the natural 2-dimensional representation $\gamma: G \   \hookrightarrow  \SL_2(\CC)$ coming from 
the action of $G$ on $\CC^2$ by matrix multiplication coincides with an affine 
Cartan matrix for a simply-laced finite root system.  Our last result is the following.

\begin{theorem}  \label{McKay-abelianization-theorem}
For a faithful representation $\gamma:G \   \hookrightarrow \ \SL_2(\CC)$ of a finite group $G$,
 there is an isomorphism $\KC \cong \widehat{G^{\ab}}$.
\end{theorem}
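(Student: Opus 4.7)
The plan is to leverage Theorem~\ref{abelianization-theorem}, which already supplies a surjection $\KC \twoheadrightarrow \widehat{G^{\ab}}$ for any faithful $\gamma: G \hookrightarrow \SL_n(\CC)$ and in particular for $n = 2$. Since both sides are finite abelian groups and $|\widehat{G^{\ab}}| = |G^{\ab}|$ by Pontrjagin duality, the surjection is forced to be an isomorphism as soon as $|\KC| = |G^{\ab}|$. So the entire theorem reduces to this numerical equality.

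To obtain $|\KC|$, I would invoke McKay's correspondence in the form recalled in the introduction: for $G \hookrightarrow \SL_2(\CC)$, the extended matrix $\tilde C$ coincides with the affine Cartan matrix of a simply-laced finite root system of type $A$, $D$, or $E$. Deleting the row and column indexed by the trivial character $\chi_0$ identifies the McKay-Cartan matrix $C$ with the Cartan matrix of the corresponding finite simply-laced root system. The interpretation in Section~\ref{root-system-section} of $\KC$ as the weight lattice modulo the root lattice then gives $\KC \cong P/Q$, whose order is the index of connection $f = |\det C|$.

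It remains to verify $|G^{\ab}| = f$. A uniform route is to note that the one-dimensional irreducible characters of $G$ correspond under the McKay bijection to those nodes of the extended Dynkin diagram where $\dim \chi_i = 1$, i.e.\ the nodes of mark $1$; there are $|G^{\ab}|$ of them. On the Lie-theoretic side, it is classical that the mark-$1$ nodes of a simply-laced extended Dynkin diagram form a single orbit under the group of diagram automorphisms fixing the affine node, and that orbit is a torsor for $P/Q$. So the mark-$1$ count is also $|P/Q| = f$, closing the argument. Alternatively one can just check the five families of finite subgroups of $\SL_2(\CC)$ directly: the cyclic group $\ZZ/n$ against $\tilde A_{n-1}$; the binary dihedral group of order $4n$ against $\tilde D_{n+2}$, using that its abelianization is $\ZZ/2 \times \ZZ/2$ or $\ZZ/4$ according to the parity of $n$, in agreement with $P/Q$ in type $D$; and the binary tetrahedral, octahedral, icosahedral groups, whose abelianizations $\ZZ/3$, $\ZZ/2$, $\{1\}$ match the fundamental groups of $E_6$, $E_7$, $E_8$.

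The main obstacle I anticipate is that this argument is not truly uniform: the hypothesis $n = 2$ is used only to invoke the ADE classification via McKay, and no direct construction of an inverse map $\widehat{G^{\ab}} \to \KC$ compatible with the surjection of Theorem~\ref{abelianization-theorem} is produced. A more satisfying proof would exhibit such an inverse intrinsically, for example by lifting characters of $G^{\ab}$ to classes in the representation ring of $G$ and tracking them through the rng structure on $\KC$ described in Section~\ref{rng-section}; but doing this without leaning on the classification appears genuinely harder and is not needed for the stated result.
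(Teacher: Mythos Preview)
Your proposal is correct and follows essentially the same route as the paper: reduce the surjection of Theorem~\ref{abelianization-theorem} to a cardinality equality, identify $|\KC|$ with the index of connection $f=|P(\Phi)/Q(\Phi)|$ via McKay's identification of $C$ with a simply-laced Cartan matrix, and then equate $f$ with $|\widehat{G}|$ by observing that the one-dimensional characters of $G$ are exactly the nodes $i$ with $\delta^{(e)}_i=1$. The paper invokes Proposition~\ref{minuscule-dominant-properties}(a),(b) for the count of mark-$1$ nodes rather than your torsor statement (which, incidentally, should refer to automorphisms of the extended diagram rather than those fixing the affine node); your case-by-case alternative is also fine.
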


Thus,  the fundamental group of a simply-laced finite root system
is (noncanonically) isomorphic to $G^\ab$ for its McKay subgroup $G$.
This turns out to be equivalent to a result of Steinberg; 
see Remark~\ref{Steinberg-abelianization-remark} below.

\section*{Acknowledgments}  This work was inspired by group discussions 
at the workshop {\it Algebraic Combinatorixx}  at the Banff International Research Station (BIRS) in 2011 and began at the workshop 
{\it Whittaker Functions, Schubert Calculus and Crystals}  at the Institute for Computational and
Experimental Research in Mathematics (ICERM) in 2013.
The authors express their appreciation to those institutes for their support and hospitality.  
The second and third authors thank the CMO-BIRS-Oaxaca Institute
for its hospitality during their 2015 workshop  {\it Sandpile Groups}.
In particular, they thank Sam Payne for a helpful discussion there
leading to the formulation of Proposition~\ref{ring-and-rng-prop},
and thank Shaked Koplewitz for allowing them to sketch his proof of 
Proposition~\ref{abelian-isomorphism-characterization}.
The third author also thanks T. Schedler for helpful comments and gratefully 
acknowledges partial support  by NSF grant DMS-1001933.

\begin{section}{Avalanche-finite matrices and chip firing}
\label{chip-firing-section}  \end{section} 

  In this section, we review some  basic notions ($Z$-matrix,
 $M$-matrix, avalanche-finite matrix, critical group,  recurrent and superstable configurations) that can be found,  for example,  in  Gabrielov \cite{Gabrielov}, Guzm\'an and Klivans
 \cite{GuzmanKlivans}, and Postnikov and Shapiro \cite[\S
   13]{PostnikovShapiro}.  
 We show  variants of certain concepts, (e.g. burning configurations), that  originated 
 in  the context of  abelian sandpile models and chip firing on graphs  can 
 be adapted to the matrix case (see Definition~\ref{burning-configuration-definition}).  In 
 Theorem \ref{thm:burning} we establish a useful relation between burning configurations 
 and  recurrent configurations for an avalanche-finite matrix.

A matrix $C= (c_{ij})$ in $\ZZ^{\ell \times \ell}$ with $c_{ij} \leq 0$ 
for all $i \neq j$ 
is called a {\it $Z$-matrix}. 
Assume $\mathbb N = \{0,1,2, \ldots\}$, and  
say that the elements $v = [v_1, \ldots, v_\ell]^t \in \mathbb N^\ell$ are  {\it (nonnegative) 
chip configurations}, viewed as assigning $v_i$ chips to state
$i$ for each  $1,2,\ldots,\ell$.   For a fixed  $Z$-matrix $C$,
call  a nonnegative chip configuration $v$  {\it stable} if 
$(0 \leq ) \, v_i < c_{ii}$ for $i=1,2,\ldots,\ell$.
If $v$ is unstable,  then  $v_i \geq c_{ii}$ for some $i$,  and a new nonnegative chip
configuration $v'$ can be created by subtracting the $i^{th}$ row of $C$ from $v$,
that is, $v':= [v'_1, \ldots, v'_\ell]^t \in \mathbb N^\ell$, where 
$v'_j:=v_j - c_{ij}$ for $j=1,2,\ldots,\ell.$ 
The  result $v'$  is referred to as  the 
{\it ($C$-)firing} or {\it ($C$-)toppling} of $v$ at state $i$.  
\begin{definition} \rm \
A $Z$-matrix is called an  {\it avalanche-finite} matrix
if every nonnegative chip configuration can be brought to a stable one
by a sequence of such topplings.
 \end{definition}
Denote the zero and all-ones vectors in $\RR^\ell$
by
$$
\begin{aligned}
\zero&:=[0,0,\ldots,0]^t,\\ 
\one&:=[1,1,\ldots,1]^t.\\
\end{aligned}
$$
For $u, v$ in $\RR^\ell$, let $u \geq v$ (resp. $u > v$) mean 
that $u_i \geq v_i$ (resp. $u_i > v_i$) for $i=1,2,\ldots,\ell$.
Call a diagonal matrix $D$ {\it positive} if $[D_{11},D_{22},\ldots,D_{\ell\ell}]^t > \zero$.

\begin{proposition}
\label{toppling-equivalences-prop}
For a $Z$-matrix $C$ in $\ZZ^{\ell \times \ell}$, 
the following conditions are equivalent:
\begin{itemize}
\item[{\rm (i)}]
$C$ is an avalanche-finite matrix.
\item[{\rm (ii)}]
$C^t$ is an avalanche-finite matrix.
\item[{\rm (iii)}]
There exists a positive diagonal matrix $D$ 
with $DC + (DC)^t$ positive definite { (that is, all its eigenvalues are positive)}.
\item[{\rm (iv)}]
The eigenvalues of $C$ all have positive real part.
\item[{\rm (v)}]
$C^{-1}$ exists and has all nonnegative entries.
\item[{\rm (vi)}]
There exists $r$ in $\RR^\ell$ with $r > \zero$ and
$Cr > \zero$.
\end{itemize}
\end{proposition}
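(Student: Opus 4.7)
The plan is to establish the equivalence in two stages: first invoke the classical theory of nonsingular $M$-matrices to get (iii)$\Leftrightarrow$(iv)$\Leftrightarrow$(v)$\Leftrightarrow$(vi), then relate the chip-firing conditions (i) and (ii) to these by proving (vi)$\Leftrightarrow$(i). Since each of (iii)–(vi) is manifestly invariant under replacing $C$ by $C^t$ (eigenvalues are unchanged, $(C^t)^{-1} = (C^{-1})^t$ preserves the sign pattern, and $DC + (DC)^t$ transposes to a matrix of the same form with $D$ replaced by itself on the other side), the equivalence (i)$\Leftrightarrow$(ii) will come along for the ride once (i)$\Leftrightarrow$(vi) is known.

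For the first stage, I would cite the standard equivalences for nonsingular $M$-matrices, where versions of (iii)–(vi) appear in Berman–Plemmons and Fiedler–Pták among an exhaustive list of equivalent characterizations. Since the paper needs these mainly as a package, only a brief sketch or a direct reference is needed; in particular, (vi)$\Rightarrow$(v) follows by showing each column of $C^{-1}$ is nonnegative via a Perron–Frobenius-type argument applied to $sI - C$, and (v)$\Rightarrow$(iii) follows by taking $D = \text{diag}(C^{-1}\mathbf{1})$ (or a related positive vector).

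The substantive new content is the equivalence with the dynamical condition (i). For (vi)$\Rightarrow$(i), fix $r > \zero$ with $Cr > \zero$ and use $r^t v$ as a Lyapunov function: a firing at state $i$ replaces $v$ by $v - C^t e_i$, which decreases $r^t v$ by the fixed positive amount $(Cr)_i \geq \min_j (Cr)_j > 0$. Because the $Z$-condition $c_{ij} \leq 0$ for $i \neq j$ guarantees that firings preserve nonnegativity of $v$, we have $r^t v \geq 0$ at all times, so only finitely many firings are possible. For (i)$\Rightarrow$(vi), consider the chip configuration $v = N\mathbf{1}$ for $N$ large and stabilize it to some $v^*$ with $\zero \leq v^* < [c_{11},\ldots,c_{\ell\ell}]^t$. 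If $f \in \mathbb{N}^\ell$ records the number of firings at each state, then $N\mathbf{1} - C^t f = v^*$, whence $C^t f > \zero$ for $N$ sufficiently large.

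The key small obstacle is to verify that $f$ has strictly positive entries, not merely nonnegative ones, so that it can serve as the vector required in (vi) for $C^t$. This follows from the $Z$-property: if $f_j = 0$ then $(C^t f)_j = \sum_{i \neq j} c_{ij} f_i \leq 0$, contradicting $C^t f > \zero$. Thus $f > \zero$ and $C^t f > \zero$, which is condition (vi) for $C^t$. By the transpose-invariance of (vi) established in the $M$-matrix stage, this gives (vi) for $C$, completing the cycle. Chaining everything yields (i)$\Leftrightarrow$(vi)$\Leftrightarrow\cdots\Leftrightarrow$(iii), and the symmetric argument (or direct appeal to transpose-invariance) produces (ii) as well.
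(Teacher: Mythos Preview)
Your proposal is correct and complete. The paper's own proof is purely by citation: it invokes Plemmons \cite{Plemmons} for the equivalence of (iii)--(vi) and Gabrielov \cite{Gabrielov} for the equivalence (i)$\Leftrightarrow$(v), then notes that (i)$\Leftrightarrow$(ii) follows. Your approach agrees with the paper on the first stage (citing the $M$-matrix literature for (iii)--(vi)) but diverges on the second: rather than appealing to Gabrielov, you supply a direct Lyapunov argument for (vi)$\Rightarrow$(i) using the linear functional $r^t v$, and a constructive argument for (i)$\Rightarrow$(vi) by extracting a positive firing vector from the stabilization of $N\mathbf{1}$. This buys you a self-contained proof of the dynamical equivalence that does not depend on tracking down Gabrielov's preprint, at the modest cost of a paragraph of explicit reasoning; the paper's version is shorter but treats (i)$\Leftrightarrow$(v) as a black box. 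One cosmetic point: your route from (i) actually lands on (vi) for $C^t$ and then transfers to $C$ via the transpose-invariance of the $M$-matrix package, which is perfectly valid but slightly indirect---you could equally well have concluded (i)$\Leftrightarrow$(ii) immediately at that point, since you have shown (i) for $C$ implies (vi) for $C^t$, and (vi) for $C^t$ implies (i) for $C^t$.
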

\begin{proof}
The equivalence of (iii),(iv),(v),(vi) can be found, for example, 
in Plemmons \cite[Thm.~1]{Plemmons}.  The equivalence of (i) and (v) 
is due to Gabrielov \cite{Gabrielov}. The
equivalence of (i) and (ii) then follows.
\end{proof}
The matrices of Proposition~\ref{toppling-equivalences-prop} are commonly known as {\it  (nonsingular)  M-matrices} and arise in a broad range of mathematical disciplines.  The paper~\cite{Plemmons} by Plemmons contains $40$ equivalent conditions for a 
$Z$-matrix to be an $M$-matrix.  
 
\begin{definition} \rm \
The  {\it critical group} $\KC$ of  an avalanche-finite matrix $C$ is 
the cokernel,
$$
\KC:=\coker(C^t: \ZZ^\ell \rightarrow \ZZ^\ell):=\ZZ^\ell/\im(C^t).
$$
\end{definition} 

\begin{remark} \rm \ 
In defining $\KC=\coker(C^t)$, 
there is little danger in replacing $C$ by its transpose $C^t$ when convenient.
This is because Proposition~\ref{toppling-equivalences-prop} shows 
$C^t$ is an avalanche-finite matrix if and only if $C$ is, and nonsingular integer matrices $C$
have a (non-canonical) abelian group isomorphism
$\coker(C^t) \cong \coker(C)$, or a canonical
isomorphism 
$$\coker(C^t) \cong \widehat{\coker(C)}.$$
Here $\widehat{A}$ for a finite abelian group $A$ is its group of
characters or {\it Pontrjagin dual} \cite[Exer. 5.2.14]{DummitFoote}, 
$$
\widehat{A}:=\Hom(A,\CC^\times) \cong \Hom(A,\QQ/\ZZ),
$$
which satisfies $\widehat{A} \cong A$, but not canonically.

\end{remark}

The terminology ``critical group" for $\KC$ comes from 
certain coset representatives,
called the critical or recurrent configurations, which
are distinguished by their toppling dynamics, as we explain next.
The details about their existence and uniqueness can be found in  \cite[Lemma 13.2]{PostnikovShapiro}. 

\begin{definition} \rm \
For an avalanche-finite matrix $C$ and a nonnegative configuration $v$, 
there is a unique stable configuration,
denoted $\stab_C(v)$ and called the {\it stabilization of $v$}, 
that is reachable by a sequence of valid $C$-topplings from $v$;  moreover,  $\stab_C(v)$  is 
independent of the topplings used to reach stability.   
\end{definition}

Note that this implies
\begin{equation}
\label{stabilization-fact}
\stab_C(v+p)=\stab_C(\stab_C(v)+p)\text{ for }\   p \in \mathbb N^\ell,
\end{equation}
since any sequence of topplings that stabilize $v \longmapsto \stab_C(v)$
gives a sequence of valid topplings $v+p \longmapsto \stab_C(v)+p$,
which can be performed first when computing $\stab_C(v+p)$.

\begin{definition} \rm \ 
For each $i=1,2,\ldots,\ell$, the 
{\it $i^{th}$ avalanche operator} $X_i$ is the map on the set of stable configurations defined by $X_i(v):=\stab_C(v+e_i)$,  where $e_i$ is the
$i$th standard unit basis vector of $\ZZ^\ell$.
\end{definition}

It turns out (see \cite{Dhar} or \cite[Lemma 13.3]{PostnikovShapiro}) that
the avalanche operators commute:  $X_i X_j = X_j X_i$.
In fact, \eqref{stabilization-fact} implies the more general result, 
\begin{equation}
\label{avalanches-rewritten}
X_1^{p_1}\cdots X_\ell^{p_\ell}(v)=\stab_C(v+p),
\end{equation}
 for any vector $p=(p_1, \ldots, p_\ell)  \in \mathbb N^\ell$.  The {\it abelian sandpile model} is
 a Markov chain whose state space is the set of stable configurations and whose
 transitions are given by randomly choosing a site $i \in \{1,
 \ldots, \ell\}$ and performing the avalanche operator $X_i$.

The next proposition gives various known equivalent definitions of the recurrent configurations of the model.  For example,  (d) is the definition used in
\cite[\S13, p. 3138]{PostnikovShapiro},
while  (c) is used in \cite[Defn. 4.10]{GuzmanKlivans}, 
and (e)  is related to Dhar's burning algorithm, see Theorem~\ref{thm:burning}.  
 Moreover, (e) is the definition that we will appeal to in the proof of Theorem~\ref{Cartan-recurrents-theorem}.

To state the proposition-definition, we  introduce
the {\it support} $\supp(p):=\{i: p_i \neq 0\}$ of
a vector $p = [p_1,\dots,p_\ell]^t$ in $\ZZ^\ell$  and the {\it digraph} $D(C)$ associated
to $C$,  which has node set $\{1,2,\ldots,\ell\}$, and directed arcs
$i \rightarrow j$ whenever $c_{ij} < 0$, that is, whenever chip firing
at node $i$ adds at least one chip to node $j$.  

\begin{proposition}
\label{recurrent-definitions-prop}
 Let $C$ in $\ZZ^{\ell \times \ell}$ be an avalanche-finite matrix.
The following are equivalent for $v$ in $\ZZ^\ell$.
Define $v$ to be {\rm ($C$-)recurrent} if one of them holds (hence all of them hold):   

\begin{itemize}
\item[{\rm (a)}] $v=\stab_C(v+p)$
for some   $p \in \mathbb N^\ell$, \  $p > {\bf 0}$. 

\item[{\rm (b)}] $v=\stab_C(v+Np)$
for some  $p \in \mathbb N^\ell$, \  $p > {\bf 0}$,  and every integer $N \geq 1$.

\item[{\rm (c)}] $v=\stab_C(u)$
for some $u \in \mathbb N^\ell$ with $u_i \geq c_{ii}$ for all $i$.

\item[{\rm (d)}] $v=\stab_C(v+p_i e_i)(=X_i^{p_i}(v))$ for $i=1,2,\ldots,\ell$
for some $p = (p_1, \dots, p_\ell) \in \mathbb N^\ell$, \  $p > {\bf 0}$. 

\item[{\rm (e)}] $v=\stab_C(v+p)$
for some  $p \in \mathbb N^\ell$, with the property that 
every node $j=1,2,\ldots,\ell$ has at least one directed
path $i=i_0 \rightarrow i_1 \rightarrow \cdots \rightarrow i_{m-1} \rightarrow i_m=j$
 in the digraph $D(C)$ from a node $i$ in $\supp(p)$.

\end{itemize}
\end{proposition}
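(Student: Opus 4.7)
My plan is to prove Proposition~\ref{recurrent-definitions-prop} by organizing the implications around (a), exploiting \eqref{stabilization-fact}, \eqref{avalanches-rewritten}, the commutativity of the avalanche operators $X_i$, and the finiteness of the set of stable configurations.

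The equivalence (a)$\Leftrightarrow$(b) is essentially formal: (b)$\Rightarrow$(a) is the $N=1$ case, and (a)$\Rightarrow$(b) is an easy induction using \eqref{stabilization-fact}:
\[
\stab_C(v + Np) \;=\; \stab_C\!\bigl(\stab_C(v + (N-1)p) + p\bigr) \;=\; \stab_C(v+p) \;=\; v.
\]
Equivalence (a)$\Leftrightarrow$(d) follows from \eqref{avalanches-rewritten}, which recasts $\stab_C(v+p) = X_1^{p_1}\cdots X_\ell^{p_\ell}(v)$; one direction is immediate. For (a)$\Rightarrow$(d), I would fix $i$, set $T_i := \prod_{j \neq i} X_j^{p_j}$, and iterate (a) to get $v = X_i^{Np_i}\, T_i^N(v)$ for every $N \geq 1$. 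Finiteness of stable configurations yields $N_1 < N_2$ with $T_i^{N_1}(v) = T_i^{N_2}(v)$, and applying $X_i^{(N_2-N_1)p_i}$ to the identity $v = X_i^{N_1 p_i}\, T_i^{N_1}(v)$ together with commutativity forces $X_i^{(N_2-N_1)p_i}(v) = v$. Taking $q_i := (N_2-N_1)p_i > 0$ for each $i$ then assembles the vector witnessing (d).

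Next, for (a)$\Leftrightarrow$(c), the direction (a)$\Rightarrow$(c) is immediate by choosing $N$ large enough in (b) that $v_i + Np_i \geq c_{ii}$ for all $i$ and setting $u := v + Np$. The converse (c)$\Rightarrow$(a) is more delicate. Given $v = \stab_C(u)$ with $u_i \geq c_{ii}$, I plan to show that the sequence $w_n := \stab_C(nu)$ is periodic from $n = 1$ with period dividing the order $d$ of $u$ in $\KC$. Coset periodicity $w_n \equiv w_{n+d} \pmod{\im(C^t)}$ follows immediately since $du \in \im(C^t)$, the integer firing vector $d(C^t)^{-1}u$ being nonnegative by Proposition~\ref{toppling-equivalences-prop}(v). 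To upgrade to the equality $w_n = w_{n+d}$ for $n \geq 1$, one observes that both $nu$ and $(n+d)u$ exceed the diagonal of $C$ componentwise, so each stabilization fires every node at least once and lands on the same distinguished representative within their common coset. Consequently $v = w_1 = w_{1+d} = \stab_C(v + du)$ via \eqref{stabilization-fact}, delivering (a) with $p := du > \mathbf{0}$.

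Finally, for (a)$\Leftrightarrow$(e), the direction (a)$\Rightarrow$(e) is immediate since strictly positive $p$ satisfies $\supp(p) = \{1,\ldots,\ell\}$ and thus trivially reaches every node by a length-zero path. The main obstacle is (e)$\Rightarrow$(a). Given $v = \stab_C(v+p)$ together with the path hypothesis, iterating as in (a)$\Leftrightarrow$(b) (which needs only $p \geq 0$) yields $\stab_C(v + Np) = v$ for all $N \geq 1$, with nonnegative firing vector $k_N = N(C^t)^{-1}p$ by Proposition~\ref{toppling-equivalences-prop}(v). The positivity pattern of $(C^{-1})_{ij}$ records directed reachability from $i$ to $j$ in $D(C)$, via the Neumann series expansion of $(I - D^{-1}A)^{-1}$ where $C = D - A$ with $D$ the diagonal of $C$; the path condition in (e) therefore upgrades $(C^t)^{-1}p$ to componentwise strict positivity. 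Choosing $N$ large enough that $k_N$ is a strictly positive integer vector, every node fires at least once during $v + Np \to v$, and from this one exhibits a strictly positive $p'$ with $\stab_C(v + p') = v$ by augmenting $Np$ with a suitable element of $\im(C^t) \cap \NN^\ell$ whose support covers $\{1,\ldots,\ell\} \setminus \supp(p)$. The hardest steps will be the coset-representative argument in (c)$\Rightarrow$(a) and the support-augmentation step in (e)$\Rightarrow$(a); both rest fundamentally on the positivity properties of $(C^t)^{-1}$ guaranteed by Proposition~\ref{toppling-equivalences-prop}(v).
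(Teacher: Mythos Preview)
Your handling of (a)$\Leftrightarrow$(b) and (a)$\Leftrightarrow$(d) is fine and matches the paper's strategy. There are, however, two genuine gaps.

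\textbf{(c)$\Rightarrow$(a).} Your periodicity argument is both unnecessary and circular. You assert that since $nu$ and $(n{+}d)u$ both exceed the diagonal of $C$, their stabilizations ``land on the same distinguished representative within their common coset.'' No such uniqueness statement is available at this point in the paper: Theorem~\ref{recurrents-represent-cosets-thm} (recurrents are a system of coset representatives) is stated \emph{after} this proposition, and in any case you are in the middle of establishing what ``recurrent'' means. Without that input the equality $w_n=w_{n+d}$ does not follow; there can be several stable configurations in a single coset (e.g.\ for $C=\left[\begin{smallmatrix}3&-1\\-1&3\end{smallmatrix}\right]$, both $(0,0)$ and $(2,2)$ are stable and congruent modulo $\im(C^t)$). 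The paper's argument is a one-liner you overlooked: since $v=\stab_C(u)$ is stable, $v_i<c_{ii}\le u_i$ for every $i$, so $p:=u-v>\mathbf{0}$ and $v=\stab_C(u)=\stab_C(v+p)$.

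\textbf{(e)$\Rightarrow$(a).} Your Neumann-series argument correctly shows that the firing vector $k_N=N(C^t)^{-1}p$ is a strictly positive integer vector, but the final ``augmenting'' step does not work. If you set $p'':=Np+q$ with $q\in\im(C^t)\cap\NN^\ell$, then by \eqref{stabilization-fact}
\[
\stab_C(v+Np+q)=\stab_C\!\bigl(\stab_C(v+Np)+q\bigr)=\stab_C(v+q),
\]
and you have no reason to know $\stab_C(v+q)=v$; if $q>\mathbf{0}$ this is exactly the statement (a) you are trying to prove. Knowing that every node fires during $v+Np\to v$ is also not enough by itself: an intermediate configuration $v'$ reached after every node has fired need not satisfy $v'-v>\mathbf{0}$ componentwise. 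The paper closes this gap with a direct ``flooding'' construction: replace $p$ by $N_0p$ with $N_0$ enormous, and explicitly perform topplings in waves along the distance stratification from $\supp(p)$ in $D(C)$, so that after the flooding the configuration is $v+p'$ with $p'>\mathbf{0}$. Since the entire stabilization of $v+N_0p$ ends at $v$, continuing from $v+p'$ gives $\stab_C(v+p')=v$, which is (a).
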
   

\begin{proof}
We will check these implications:

$$
\begin{matrix}
  &               &  &         &  d  &         &\\
  &               &  &\swarrow &     &\nwarrow& \\
e &\xleftrightharpoons[\quad]{\quad}& a&         &\longrightarrow     &         &b\\
  &               &  &\nwarrow &     & \swarrow& \\  
  &               &  &         &  c  &
\end{matrix}
$$

\noindent
{\sf (a) implies (b): \,}  
Note $v=\stab_C(v+p)$ implies, 
by iterating \eqref{stabilization-fact}, that
$$
v=\stab_C(v+p)=\stab_C((v+p)+p)=\cdots=\stab_C(v+Np).
$$

\vskip.1in
\noindent
{\sf (b) implies (c): \,}  
If $p > {\bf 0}$,  then $u:=v+Np$ has $u_i \geq c_{ii}$ for large $N$.

\vskip.1in
\noindent
{\sf (c) implies (a):\,}
If $v=\stab_C(u)$,  then $v$ is stable, 
and hence $u_i \geq c_{ii} > v_i$, so one has $p:=u-v >  {\bf 0}$ with
$v=\stab(u)=\stab_C(v+p)$.

\vskip.1in
\noindent
{\sf (b) implies (d):\,}
If $v=\stab_C(v+Np)$ with $p > { \bf 0}$ and $N=1,2,...$, 
choose $N > 0$ sufficiently large so that the self-map 
$Y:=X_1^{p_1} \cdots X_{i-1}^{p_{i-1}} X_i^{p_i-1} X_{i+1}^{p_{i+1}} \cdots X_\ell^{p_\ell}$
acting on the (finite) 
set of all stable configurations has $Y^N=Y^{N+M}=Y^{N+2M}\cdots$ for some finite
order $M > 0$.  In particular,
$Y^N(v)=Y^{N+M}(v)$, and hence
$$
\begin{aligned}
v&=\stab_C(v+(N+M)p) 
 = (X_i Y)^{N+M}(v) 
 = X_i^{N+M} Y^{N+M}(v)
 = X_i^{N+M} Y^{N}(v)\\
 &= X_i^{M} (X_iY)^{N}(v)
 = X_i^{M} \stab_C(v+Np)
 = X_i^{M} (v).
\end{aligned}
$$

\vskip.1in
\noindent
{\sf (d) implies (a):\, }
Note $v=\stab_C(v+p_ie_i)$ for all $i$ implies, 
by iterating \eqref{stabilization-fact}, that
$$
v=\stab_C(v+p_1e_1)=\stab_C((v+p_1e_1)+p_2e_2)=\cdots=\stab_C(v+p).
$$

\vskip.1in
\noindent
{\sf (a) implies (e): \,}
Trivial, since $p > { \bf 0}$ means $\supp(p) = \{1,2,\ldots,\ell\}$.

\vskip.1in
\noindent
{\sf (e) implies (a):\, }
If $v=\stab_C(v+p)$ with $p$ as in (e), let 
$M:=\max_{i=1}^\ell \{c_{ii}\}$ and choose a tower of integers 
$1=:N_\ell \ll N_{\ell-1} \ll \cdots N_2 \ll N_1 \ll N_0$
where $N_{d} M < N_{d-1}$.
Then iterating \eqref{stabilization-fact} gives
$v=\stab_C(v+N_0p)$.  We claim  
$v=\stab_C(v+N_0p)$ can be computed by first ``flooding the network with chips''
as follows.   
  
Let $S_0:=\supp(p)$, and let $S_d$ for $d=1,2,\ldots,\ell-1$
denote the nodes whose shortest directed path from $\supp(p)$ has $d$ steps.
One can first do $N_1$ topplings at each node in $S_1$, 
then $N_2$ topplings at each node in $S_2$, and so on, finishing
with $N_{\ell-1}$ topplings at each node in $S_{\ell-1}$.
At the $d^{th}$ stage, each node in $S_d$  
will have received at least $N_{d-1}$ chips from 
nodes  in $S_{d-1}$, and since $N_{d-1} > N_{d} M \geq N_{d} c_{ii}$,
it will have the $N_{d} c_{ii}$ chips that it needs to
do $N_d$ valid topplings.

After these ``flooding'' topplings, the result has the form 
$v+p'$ where $p'_i \geq N_\ell=1$ for all $i$, which one can continue
toppling until stability is achieved.  
Hence $v=\stab_C(v+N_0p)=\stab_C(v+p')$, so (a) is satisfied.
\end{proof} 

\begin{theorem}\cite{Dhar}, \cite[Thm.~13.4]{PostnikovShapiro}
\label{recurrents-represent-cosets-thm}
For any avalanche-finite matrix $C$ in $\ZZ^{\ell \times \ell}$,
the recurrent configurations in $\ZZ^\ell$ form
a system of coset representatives for 
$\coker(C^t)=\ZZ^\ell/\im(C^t)$.
\end{theorem}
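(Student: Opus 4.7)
The plan is to show that the natural map $\phi: R \to \KC$ sending a recurrent configuration $v$ to its coset $v + \im(C^t)$ is bijective, where $R$ denotes the set of recurrent configurations; since each toppling subtracts a column of $C^t$, the map $\phi$ is well-defined. Surjectivity is quick: applying Proposition~\ref{toppling-equivalences-prop}(vi) to $C^t$ gives some $r > \zero$ with $C^t r > \zero$, so for any $v \in \ZZ^\ell$ and $N$ sufficiently large, $u := v + N C^t r$ lies in the same coset as $v$ and satisfies $u_i \geq c_{ii}$ for all $i$. By Proposition~\ref{recurrent-definitions-prop}(c), the stabilization $\stab_C(u)$ is a recurrent representative of the coset $v + \im(C^t)$.

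For injectivity, the strategy is to bound $|R|$ from above by $|\KC|$ via a group action. I would first establish the auxiliary lemma that $\stab_C(v + q) \in R$ for any $v \in R$ and $q \in \NN^\ell$: writing $v = \stab_C(v + p_0)$ with $p_0 > \zero$ and iterating~\eqref{stabilization-fact} shows $\stab_C(\stab_C(v+q) + p_0) = \stab_C(v + q)$, which is condition (a) of Proposition~\ref{recurrent-definitions-prop}. It follows that each avalanche operator $X_i$ restricts to a self-map of $R$, and the identity $w = X_i(\stab_C(w + p - e_i))$ (derived from (a) with $p > \zero$) shows this self-map is surjective and hence bijective on the finite set $R$. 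Thus the $X_i$'s generate an abelian subgroup $G \le \mathrm{Perm}(R)$. I would then show $G$ acts transitively on $R$: given $v, w \in R$, use Proposition~\ref{recurrent-definitions-prop}(c) to write $w = \stab_C(u)$ with $u_i \geq c_{ii}$, and note that $q := u - v$ lies in $\NN^\ell$ since $v_i < c_{ii} \leq u_i$; then $X^q(v) = \stab_C(v + q) = \stab_C(u) = w$. A transitive action of an abelian group is automatically free, so $|G| = |R|$.

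Finally, I would verify that the surjection $\ZZ^\ell \twoheadrightarrow G$, $e_i \mapsto X_i$, contains $\im(C^t)$ in its kernel. It suffices to check this on the generators $r_j$ (the $j$-th row of $C$, equivalently the $j$-th column of $C^t$); since $c_{jj} > 0$ and $c_{ji} \leq 0$ for $i \neq j$, the relation $X^{r_j} = \mathrm{id}_R$ is equivalent to
\[ \stab_C(v + c_{jj}\, e_j) \;=\; \stab_C\Bigl(v + \sum_{i \neq j} |c_{ji}|\, e_i\Bigr) \quad \text{for every } v \in R. \]
This holds because the left-hand configuration has $v_j + c_{jj} \geq c_{jj}$ chips at site $j$, and a single valid $C$-firing there produces the right-hand configuration. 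Hence $G$ is a quotient of $\KC$, so $|R| = |G| \leq |\KC|$. Combined with the surjectivity of $\phi$ (which forces $|\KC| \leq |R|$), this gives $|R| = |\KC|$ and bijectivity of $\phi$. I expect the main obstacle to be the coordinated bookkeeping that each $X_i$ acts as a permutation of $R$ and that $\im(C^t)$ lies in the kernel; a direct attack on injectivity of $\phi$ without introducing $G$ would need to show $\stab_C(v + q) = v$ whenever $v \in R$ and $q \in \im(C^t) \cap \NN^\ell$, which appears to require essentially the same toppling identity together with the free-action argument above.
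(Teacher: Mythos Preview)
The paper does not supply its own proof of Theorem~\ref{recurrents-represent-cosets-thm}; it merely cites the result from Dhar and from Postnikov--Shapiro. So there is no in-paper argument to compare against.

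Your argument is correct and is essentially the standard proof of this fact in the sandpile literature: surjectivity via shifting into the region $\{u : u_i \geq c_{ii}\}$ and invoking characterization~(c), then the cardinality bound via the abelian monoid/group generated by the avalanche operators acting simply transitively on $R$. One small point to tighten: in your surjectivity step, Proposition~\ref{toppling-equivalences-prop}(vi) only supplies $r \in \RR^\ell$ with $r > \zero$ and $C^t r > \zero$, but you need $r \in \ZZ^\ell$ so that $N C^t r \in \im(C^t)$. This is easily repaired (the set of such $r$ is open, hence contains a rational and, after clearing denominators, an integer vector), but you should say so explicitly. Everything else---the closure of $R$ under avalanche operators, the surjectivity of each $X_i$ on $R$, transitivity, freeness from abelianness, and the toppling identity showing each row of $C$ lies in the kernel of $\ZZ^\ell \twoheadrightarrow G$---is handled cleanly.
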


Closely related is  the following notion.

\begin{definition} \rm \
\label{superstable-definition}
A configuration $u \in \mathbb N^\ell$  is said to be  {\it superstable
for a $Z$-matrix $C$}   
if $z \in \mathbb N^\ell$ and $u-C^t z \in \mathbb N^\ell$ together imply that 
$z=\zero$.  
\end{definition}

When $C$ is an avalanche-finite matrix,  the superstable configurations give another set of 
coset representatives for $\KC =  \coker(C^t)=\ZZ^\ell/\im(C^t)$,  which are distinguished 
as follows (see \cite[Thm. 4.6]{GuzmanKlivans}):  
$u$ is superstable  if and only if $u$
{\it uniquely} minimizes the energy function 
\begin{equation}\label{eq:energy}
E(u):=\Vert C^{-1}u \Vert^2 = ( C^{-1}u, C^{-1}u )
\end{equation} 
among all nonnegative vectors within its coset $u+\im(C^t)$, where $(\cdot, \cdot)$ denotes the usual
inner product on $\mathbb R^\ell$.  
There is also a simple relation between the superstable configurations and
the recurrent configurations. 

\begin{theorem} \cite[Thms. 4.14, 4.15]{GuzmanKlivans}
\label{superstable-recurrent-duality}
For an avalanche-finite matrix $C = (c_{ij})$, the vector $v^C$ defined by 
\begin{equation}\label{eq:vC}
v^C:=[c_{11}-1,\ldots,c_{\ell\ell}-1]^t
\end{equation}  
has the property that $u$ in $\NN^\ell$ is  superstable 
if and only if $v^C-u$ is recurrent.  
\end{theorem}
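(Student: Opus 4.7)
The plan is to exploit the involution $\phi: u \mapsto v^C - u$ on the set of stable configurations $\{u \in \NN^\ell : 0 \le u_i \le c_{ii}-1\}$ and show that $\phi$ carries superstable configurations into recurrent ones. Since both the superstable set and the recurrent set are complete systems of coset representatives for $\KC = \coker(C^t)$---the former as recalled immediately before the theorem statement, the latter by Theorem~\ref{recurrents-represent-cosets-thm}---they have equal cardinality $|\KC|$. As $\phi$ is an involution (hence injective), it suffices to prove the single inclusion $\phi(\text{superstables}) \subseteq \text{recurrents}$; the reverse inclusion, and hence the full biconditional, follows by a cardinality comparison.

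First I would check that every superstable $u$ is in fact stable, so that $\phi(u) = v^C - u$ lies in $\NN^\ell$: plugging $z = e_i$ into Definition~\ref{superstable-definition}, if $u_i \ge c_{ii}$ then $u - C^t e_i \in \NN^\ell$ (its $i$-th entry is $u_i - c_{ii} \ge 0$ and its $j$-th entry for $j \neq i$ is $u_j - c_{ij} \ge u_j \ge 0$, since $c_{ij} \le 0$), violating superstability. Thus $\phi$ is a well-defined involution on the stable set.

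Next, given a superstable $u$, set $r := v^C - u$ and verify that $r$ is recurrent by producing a witness for condition (c) of Proposition~\ref{recurrent-definitions-prop}. Using that $C^{-1}$ has nonnegative entries by Proposition~\ref{toppling-equivalences-prop}(v), and that by part (vi) applied to $C^t$ there exists $s > \zero$ with $C^t s > \zero$, I would choose an integer vector $z \in \NN^\ell$ (a suitable positive scaling of $s$) so that $(C^t z)_i \ge u_i + 1$ for every $i$. Setting $w := r + C^t z$, we get $w_i \ge c_{ii}$ for all $i$, meeting the hypothesis of (c). It remains to check $\stab_C(w) = r$. This amounts to the standard ``least-action principle'' for $M$-matrices: whenever $w \in \NN^\ell$ and $w - C^t z \in \NN^\ell$ for some $z \in \NN^\ell$, there is a valid firing sequence from $w$ in which site $i$ fires exactly $z_i$ times. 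Applied to the $z$ constructed above, this gives a legal toppling sequence from $w$ that terminates at $r$; since $r$ is stable, confluence of $\stab_C$ then forces $\stab_C(w) = r$.

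The main obstacle is this least-action step: showing the firing vector $z$ is actually realizable by a legal toppling sequence. A clean way to handle it is a short induction on $\|z\|_1$, at each stage picking a site $i$ with $z_i > 0$ whose current chip count is at least $c_{ii}$ (existence of such a site, given that $w - C^t z \ge \zero$, follows from the $M$-matrix structure of $C$), firing there, and recursing on $z - e_i$ together with the updated configuration. Once this principle is in hand, the forward direction of the theorem is complete, and the reverse direction---$v^C - u$ recurrent $\Rightarrow u$ superstable---follows automatically from the cardinality argument at the outset.
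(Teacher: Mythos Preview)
The paper does not actually prove this theorem; it is quoted from \cite{GuzmanKlivans} without proof. So there is no ``paper's own proof'' to compare against, and I evaluate your argument on its merits.

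Your overall strategy (show $\phi$ carries superstables into recurrents, then count) is sound, but the least-action step has a genuine gap. You claim that whenever $w\in\NN^\ell$, $z\in\NN^\ell$, and $w-C^t z\in\NN^\ell$, there is a legal firing sequence from $w$ with firing vector exactly $z$, and that at each stage a fireable site in $\supp(z)$ exists ``from the $M$-matrix structure of $C$''. This is false as stated: for $C=\left[\begin{smallmatrix}2&-1\\-1&2\end{smallmatrix}\right]$, $w=(1,1)$, $z=(1,1)$, one has $w-C^t z=(0,0)\ge\zero$, yet $w$ is already stable and no site can fire. Notice that in this counterexample $u=v^C-(w-C^t z)=(1,1)$ is \emph{not} superstable, which is the real obstruction. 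The point is that your inductive step must invoke the superstability of $u$, which you have not used anywhere in the argument after the first paragraph.

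Here is the fix. At a general stage you have a current configuration $w'\ge\zero$ and a remaining firing vector $z'\in\NN^\ell$ with $w'-C^t z'=r=v^C-u$. Suppose for contradiction that no $i\in\supp(z')$ has $w'_i\ge c_{ii}$; then $w'_i\le (v^C)_i$ for $i\in\supp(z')$. For $i\notin\supp(z')$ one has $(C^t z')_i=\sum_{j\in\supp(z')} c_{ji} z'_j\le 0$ (as $c_{ji}\le 0$ for $j\ne i$), so $w'_i=r_i+(C^t z')_i\le r_i\le (v^C)_i$. Hence $v^C-w'\ge\zero$, and then
\[
u - C^t z' \;=\; (v^C - r) - C^t z' \;=\; v^C - w' \;\ge\; \zero
\]
with $z'\ne\zero$, contradicting superstability of $u$. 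This supplies the missing inductive step; with it, your proof goes through.
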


For a given avalanche-finite matrix, in general it is hard to predict or parameterize
its set of recurrent (or superstable) configurations.  However, we will show in
Section~\ref{Dynkin-chips-section} that the Cartan matrix of a finite,
crystallographic, irreducible {\it root system} is always an avalanche-finite matrix,
and we will identify its recurrent and superstable configurations explicitly.

One way to test whether or not a configuration is recurrent
is to use a {\it burning configuration}.  The following conditions were stated by Dhar~\cite{Dhar}
for undirected graphs and by Speer~\cite{Speer} for directed graphs, (see also \cite[Thm.~ 2.27]{Perkinson}).  We give the
necessary variant for a general avalanche-finite matrix.  

\begin{definition} \rm \
\label{burning-configuration-definition}
A vector $b$ in $\NN^\ell$ is a {\rm burning configuration}
for an avalanche-finite matrix $C$ in $\ZZ^{\ell \times \ell}$ if
  \begin{itemize}  
  \item[(i)] $b$ is the image of some element of $\ZZ^\ell$ under $C^t$, and
  \item[(ii)] every node $j = 1,2,\ldots,\ell$ has at least one directed path from $i$ to $j$ in the digraph $D(C)$ from a node $i$ in $\supp(b)$  to $j$.
  \end{itemize}
  \end{definition}
  
Note it follows from  (ii) that a burning configuration $b \neq \zero$.

\begin{theorem}\label{thm:burning}
 Assume $b$  is a burning configuration for the avalanche-finite matrix $C$.   Then 
 a configuration $v$ is recurrent  if and only if $\stab_C(v+b) = v$.
\end{theorem}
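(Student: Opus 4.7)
The plan is to treat the two directions separately, leveraging the characterizations of recurrence in Proposition~\ref{recurrent-definitions-prop} and the uniqueness statement of Theorem~\ref{recurrents-represent-cosets-thm}.

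For the direction $(\Leftarrow)$, I would simply take $p := b$ in Proposition~\ref{recurrent-definitions-prop}(e): part (ii) of Definition~\ref{burning-configuration-definition} guarantees the required directed-path condition from $\supp(b)$, while the hypothesis $v = \stab_C(v+b)$ is precisely the stabilization identity demanded by~(e). Notably, this direction uses only condition~(ii) of the burning definition.

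For the direction $(\Rightarrow)$, the key idea is to produce $w := \stab_C(v+b)$ and show both that $w$ is recurrent and that $w \equiv v \pmod{\im(C^t)}$, from which uniqueness of the recurrent coset representative (Theorem~\ref{recurrents-represent-cosets-thm}) forces $w = v$. Using recurrence of $v$ via Proposition~\ref{recurrent-definitions-prop}(a), I can select $p > \zero$ in $\NN^\ell$ with $v = \stab_C(v+p)$. Two applications of \eqref{stabilization-fact} then yield
\[
\stab_C(w + p) \;=\; \stab_C(v+b+p) \;=\; \stab_C(\stab_C(v+p)+b) \;=\; \stab_C(v+b) \;=\; w,
\]
exhibiting $w$ itself as recurrent by criterion~(a). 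For the coset statement, condition~(i) of Definition~\ref{burning-configuration-definition} gives $b \in \im(C^t)$, and since $\stab_C$ operates by subtracting rows of $C$, one has $w \equiv v + b \equiv v \pmod{\im(C^t)}$.

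I do not foresee a serious obstacle: the two halves of Definition~\ref{burning-configuration-definition} are precisely engineered for the two directions. Condition~(ii) supplies the path hypothesis of criterion~(e) for the easy direction, while condition~(i) supplies the coset hypothesis needed to invoke Theorem~\ref{recurrents-represent-cosets-thm} in the harder direction. The only care required is in threading the abelian identity \eqref{stabilization-fact} through the correct chain of nested stabilizations to witness that the candidate $w$ is itself recurrent.
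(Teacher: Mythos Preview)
Your proof is correct and follows the same overall architecture as the paper's: for $(\Leftarrow)$ both invoke Proposition~\ref{recurrent-definitions-prop}(e) directly, and for $(\Rightarrow)$ both show that $w:=\stab_C(v+b)$ is recurrent and lies in the same coset of $\im(C^t)$ as $v$, then appeal to Theorem~\ref{recurrents-represent-cosets-thm}.

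The one genuine difference is in how recurrence of $w$ is established. The paper writes $\stab_C(v+b)=X_1^{b_1}\cdots X_\ell^{b_\ell}(v)$ and asserts (without proof) the principle that avalanche operators preserve recurrence; it also inserts a preliminary observation, via the superstability of $v^C-v$, that $v+b$ is genuinely unstable. Your argument instead verifies criterion~(a) for $w$ directly, threading \eqref{stabilization-fact} through the chain $\stab_C(w+p)=\stab_C(v+b+p)=\stab_C(\stab_C(v+p)+b)=\stab_C(v+b)=w$. This is arguably more self-contained: it avoids both the unproven preservation principle and the detour through superstability, at the modest cost of one extra invocation of \eqref{stabilization-fact}.
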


\begin{proof}  ($\impliedby$ ) If $\stab_C(v+b) = v$, then $v$ is recurrent by Proposition~\ref{recurrent-definitions-prop}(e).

($\implies$)  We suppose now that $v$ is recurrent and $b = C^t z$ for $z \in \ZZ^\ell$,   and first argue that the configuration $v+b$ is unstable. 
Since $v$ is recurrent for $C$, the vector $u:= v^{C} - v$ is superstable, where $v^C$ is as in \eqref{eq:vC},  and hence
$$v^{C} - (v+b) =  u - b = u - C^{t}z$$
cannot lie in $\mathbb N^{\ell}$, that is, $v+b$ is not componentwise less than $v^{C}$, so $v+b$ is unstable.  Now we show  $\stab_C(v+b)$ is recurrent.  By definition,  it must be stable.  To see recurrence, consider expanding $b$ in terms of avalanche operators:
$$\stab_C(v+b) = X_1^{b_1}\cdots X_\ell^{b_\ell}(v).$$ 
If $v$ is recurrent, and performing a sequence of topplings on $v$  
results in a stable configuration $x$, then $x$ is also recurrent.  
Hence $\stab_C(v+b)$ is recurrent.  Since $b$ lies in $\im(C^t)$, $v$ and $\stab_C(v+b)$ are in the same coset modulo $\im(C^t)$.  But, recurrent configurations are unique per equivalence class,  and so $v$ must equal $\stab_C(v+b)$.  
  \end{proof}

\begin{remark} \rm \ 
\label{number-of-firings-remark} 
Given a burning configuration $b$ for $C$, let $z = {
  [z_1,\dots,z_\ell]^t}:=(C^t)^{-1}b$.  Then $z \in \NN^\ell$, since
$C^{-1}$ has nonnegative entries by
Proposition~\ref{toppling-equivalences-prop}.  Any stabilization
process from $v+b$ to $v$ for any recurrent $v$ has exactly $z_i$
firings of node $i$.
\end{remark}

\begin{remark} \rm \ 
\label{digraph-remark}
There is an extensive theory of critical groups for avalanche-finite matrices $C$
that come from directed graphs (see \cite{ChapmanGarciaEtAl, BjornerLovasz, HLMPPW, Speer, Wagner}).  
In it, one starts with a directed graph $D$
on node set $\{0,1,2,\ldots,\ell\}$ with $m_{ij}$ arcs directed from node $i$ to node $j$,
and  assumes  the distinguished ``source" node $0$ has at least one directed path to every other node $j$.  One then defines a {\it Laplacian matrix} $\tilde{L}$ in which $\tilde{L}_{ij}=\delta_{ij} d_i - m_{ij}$,  where $d_i$ is the outdegree of vertex $i$, and $\delta_{ij}$ is the Kronecker delta.
From this one derives the {\it reduced Laplacian} $L$ from $\tilde{L}$ by
striking out the $0^{th}$ row and column.  This always gives a 
avalanche-finite matrix; see Postnikov and Shapiro \cite[Prop. 13.1.2]{PostnikovShapiro}.
Its cokernel is the critical group $\mathsf{K}(D):=\coker(L) \cong \coker(L^t)$.
Here the cardinality $|\mathsf{K}(D)|$ also counts {\it arborescences} 
in $D$: directed trees in which every vertex has a directed path toward   
vertex $0$.
\end{remark}   

In fact, we will often obtain our 
avalanche-finite matrix $C \in \ZZ^{\ell \times \ell}$ by
striking out a row and column in a singular matrix 
$\tilde{C}$ in $\ZZ^{(\ell+1) \times (\ell+1)}$. 
Here we collect for later use  
some equivalent descriptions of the cokernels in this context. 
For  this purpose, we fix an ordered $\ZZ$-basis $\{e_0,e_1,\ldots,e_\ell\}$ for $\ZZ^{\ell+1}$.

\begin{proposition}
\label{Cartan-extended-Cartan-prop} 
Assume $C$ in $\ZZ^{\ell \times \ell}$ is obtained
from some $\tilde{C}=(c_{ij})_{i,j=0,1,\ldots,\ell}$ 
in $\ZZ^{(\ell+1) \times (\ell+1)}$ by removing the $0^{th}$ row and $0^{th}$ column.
Let $\delta=[\delta_0,\delta_1,\ldots,\delta_\ell]^t$ be a primitive
vector in $\ZZ^{\ell+1}$ in the nullspace of $\tilde{C}$, 
that is, $\tilde{C}\delta=\mathbf{0}$ with $\gcd(\delta)=1$, so that there is an inclusion of sublattices
\begin{equation}
\label{sublattice-inclusions}
\im(C^t) 
\ \  \subseteq  \ \ 
 \delta^\perp:=\{x \in \ZZ^{\ell+1}: x \cdot \delta=0\} 
\  \  \subseteq  \ \
\ZZ^{\ell+1}.
\end{equation}
Then one has
\begin{equation}
\label{perp-presentation}
\coker(\tilde{C}^t) \cong \ZZ  \oplus 
  \left( \delta^\perp/\im(\tilde{C}^t) \right).
\end{equation}
Assuming the  stronger condition that $\delta_0=1$ gives
\begin{equation}
\label{alternate-presentation}
\coker(C) \cong  
\ZZ^{\ell+1} / \big( \ZZ e_0 + \im(\tilde{C}) \big).
\end{equation}
Under  the even stronger assumption that  $\tilde{C}\delta=\mathbf{0}=\tilde{C^t}\gamma$
for some $\gamma, \delta \in \ZZ^{\ell+1}$ with $\gamma_0=\delta_0=1$, then
\begin{align}
\label{perp-cokernel-versus-cokernel}
\coker(C^t) &\cong \delta^\perp/\im(\tilde{C}^t),\\
\label{cokernel-relation}
\coker(\tilde{C}^t) &\cong \ZZ \oplus \coker(C^t).
\end{align}
\end{proposition}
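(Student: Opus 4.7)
The plan is to handle the four assertions in order, leveraging repeatedly the splitting of $\ZZ^{\ell+1}$ induced by $\delta$. First I would record the fundamental inclusion: since $\tilde C\delta=\zero$, for every $x\in\ZZ^{\ell+1}$ one has $(\tilde C^t x)\cdot\delta=x\cdot(\tilde C\delta)=0$, so $\im(\tilde C^t)\subseteq\delta^\perp$, which is the content of \eqref{sublattice-inclusions}. Because $\gcd(\delta)=1$, the linear functional $x\mapsto x\cdot\delta$ maps $\ZZ^{\ell+1}$ onto $\ZZ$ with kernel $\delta^\perp$; since $\ZZ$ is free, the resulting short exact sequence $0\to\delta^\perp\to\ZZ^{\ell+1}\to\ZZ\to 0$ splits, giving an (abstract) direct-sum decomposition $\ZZ^{\ell+1}\cong\ZZ\oplus\delta^\perp$ in which $\im(\tilde C^t)$ sits entirely in the $\delta^\perp$ summand.

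Given that splitting, \eqref{perp-presentation} is immediate: quotienting by $\im(\tilde C^t)$ leaves the $\ZZ$ summand untouched, so $\coker(\tilde C^t)\cong\ZZ\oplus(\delta^\perp/\im(\tilde C^t))$. Next, for \eqref{alternate-presentation} I would exploit the hypothesis $\delta_0=1$: the relation $\tilde C\delta=\zero$ rearranges to $\tilde C e_0=-\sum_{j=1}^\ell\delta_j\tilde C e_j$, showing that the zeroth column of $\tilde C$ is an \emph{integer} combination of the remaining columns. Consequently, in the quotient $\ZZ^{\ell+1}/\ZZ e_0$ (which we identify with $\ZZ^\ell$ via $\{e_1,\dots,e_\ell\}$), the image of $\tilde C$ is generated by the images of $\tilde C e_1,\dots,\tilde C e_\ell$, and those images are precisely the columns of $C$. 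Hence the image of $C$ in $\ZZ^\ell$ matches the image of $\tilde C$ in $\ZZ^{\ell+1}/\ZZ e_0$, yielding $\coker(C)\cong\ZZ^{\ell+1}/(\ZZ e_0+\im(\tilde C))$.

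Finally, for \eqref{perp-cokernel-versus-cokernel} and \eqref{cokernel-relation}, I would first apply \eqref{alternate-presentation} to $\tilde C^t$ in place of $\tilde C$; the extra hypothesis $\tilde C^t\gamma=\zero$ with $\gamma_0=1$ is exactly what is needed, and it produces $\coker(C^t)\cong\ZZ^{\ell+1}/(\ZZ e_0+\im(\tilde C^t))$. Now the hypothesis $\delta_0=1$ promotes the abstract splitting of paragraph one to a concrete \emph{internal} direct-sum decomposition $\ZZ^{\ell+1}=\ZZ e_0\oplus\delta^\perp$, because $e_0\cdot\delta=1$ makes $e_0$ a section of the projection $x\mapsto x\cdot\delta$. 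Since $\im(\tilde C^t)\subseteq\delta^\perp$, this internal decomposition gives
\[
\ZZ^{\ell+1}/(\ZZ e_0+\im(\tilde C^t))\;=\;(\ZZ e_0\oplus\delta^\perp)/(\ZZ e_0\oplus\im(\tilde C^t))\;\cong\;\delta^\perp/\im(\tilde C^t),
\]
proving \eqref{perp-cokernel-versus-cokernel}; combining with \eqref{perp-presentation} immediately yields \eqref{cokernel-relation}.

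The main obstacle, such as it is, lies in \eqref{alternate-presentation}: I need the zeroth column of $\tilde C$ to be an \emph{integral} combination of the other columns, not merely a rational one, and that is precisely what $\delta_0=1$ buys (a weaker primitivity hypothesis would give an isomorphism only up to torsion). The remaining steps are formal bookkeeping once one is careful to distinguish the abstract splitting used for \eqref{perp-presentation} from the internal splitting used for \eqref{perp-cokernel-versus-cokernel}.
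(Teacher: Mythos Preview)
Your argument is correct and follows essentially the same line as the paper's: both proofs split $\ZZ^{\ell+1}$ via the functional $x\mapsto x\cdot\delta$ for \eqref{perp-presentation}, and both eliminate the zeroth column of $\tilde C$ using $\tilde C\delta=\zero$ with $\delta_0=1$ for \eqref{alternate-presentation}. The one pleasant difference is in \eqref{perp-cokernel-versus-cokernel}: the paper works directly with the projection $\pi:\ZZ^{\ell+1}\to\ZZ^\ell$ that forgets the zeroth coordinate, checking that $\pi$ restricts to an isomorphism on $\delta^\perp$ and carries $\im(\tilde C^t)$ onto $\im(C^t)$, whereas you instead reapply \eqref{alternate-presentation} to $\tilde C^t$ (using $\gamma_0=1$) and then invoke the internal decomposition $\ZZ^{\ell+1}=\ZZ e_0\oplus\delta^\perp$---this is a tidy repackaging that avoids redoing the column-elimination computation, but the underlying content is the same.
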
  

\begin{proof}
To prove \eqref{perp-presentation},
note that primitivity of $\delta$
gives surjectivity at the end of this short exact sequence
\begin{equation}
\label{delta-short-exact-sequence}
0 \rightarrow \delta^\perp 
  \longrightarrow \ZZ^{\ell+1} \overset{(-) \cdot \delta}{\longrightarrow} \ZZ 
   \rightarrow 0.
\end{equation}
The inclusions in \eqref{sublattice-inclusions}
combined with \eqref{delta-short-exact-sequence}
give this short exact sequence

$$ 0 \rightarrow \delta^\perp/\im(\tilde{C}^t) 
  \longrightarrow \ZZ^{\ell+1}/\im(\tilde{C}^t) 
   \overset{(-) \cdot \delta}{\longrightarrow} \ZZ 
   \rightarrow 0.
$$
This sequence must split,  since $\ZZ$ is a free (hence projective) $\ZZ$-module,
showing \eqref{perp-presentation}.

To see \eqref{alternate-presentation},  
observe that
$$
\ZZ e_0 + \im(\tilde{C})
=  \im \left[ \begin{matrix} 
    1 & c_{00} & c_{01} & \cdots & c_{0\ell}\\
    0 & c_{10} & c_{11} & \cdots & c_{1\ell}\\
    \vdots & \vdots & \vdots & \ddots & \vdots\\
    0 & c_{\ell0} & c_{\ell 1} & \cdots & c_{\ell \ell}\end{matrix} \right]
=  \im \left[ \begin{matrix} 
    1 & c_{01} & \cdots & c_{0\ell}\\
    0 & c_{11} & \cdots & c_{1\ell}\\
    \vdots & \vdots & \ddots & \vdots\\
    0 & c_{\ell 1} & \cdots & c_{\ell \ell}\end{matrix} \right]
= \im \left[ \begin{matrix} 
    1 & \mathbf{0}\\
    \mathbf{0} & C\end{matrix} \right], 
$$
where the second equality used $\tilde{C}\delta=\zero$ and $\delta_0=1$.
This proves \eqref{alternate-presentation}, since
$$
\ZZ^{\ell+1}/\left(\ZZ e_0 + \im(\tilde{C}) \right)
\cong \coker\left[ \begin{matrix} 
    1 & \mathbf{0}\\
    \mathbf{0} & C\end{matrix} \right] \cong \coker(C).
$$
To verify \eqref{perp-cokernel-versus-cokernel} holds, note that the projection
$$
\begin{array}{rcl}
\ZZ^{\ell+1} &\overset{\pi}{\longrightarrow} &\ZZ^\ell \\
\left[ x_0,x_1,\ldots, x_\ell \right]^t &\longmapsto & \left[x_1,\ldots,x_\ell \right]^t
\end{array}
$$
restricts to a lattice isomorphism $\pi:\delta^\perp \rightarrow \ZZ^\ell$
since $\delta_0=1$ forces $x_0=-\sum_{i=1}^\ell \delta_i x_i$ for 
$x$ in $\delta^\perp$.  We further claim that $\pi(\im(\tilde{C}^t))=\im(C^t)$, 
since
$$
\im(\tilde{C}^t)
=  \im \left[ \begin{matrix} 
    c_{00} & c_{10} & \cdots & c_{\ell,0}\\
    c_{01} & c_{11} & \cdots & c_{\ell 1}\\
    \vdots & \vdots & \ddots & \vdots\\
    c_{0\ell} & c_{1\ell} & \cdots & c_{\ell \ell}\end{matrix} \right]
=  \im \left[ \begin{matrix} 
    c_{10} & \cdots & c_{\ell,0}\\
    c_{11} & \cdots & c_{\ell 1}\\
    \vdots & \ddots & \vdots\\
    c_{1\ell} & \cdots & c_{\ell \ell}\end{matrix} \right]
=\pi^{-1} \im(C^t),
$$
where the second (resp. third)  equality used 
$\tilde{C}^t\gamma=\zero$  (resp. $ \tilde{C} \delta=\zero$).
Hence \eqref{perp-cokernel-versus-cokernel} follows.

Lastly, \eqref{cokernel-relation} follows by combining
 \eqref{perp-presentation} and
\eqref{perp-cokernel-versus-cokernel}.
\end{proof}

\begin{example} \rm \
One can compute for the matrix 
$
\tilde{C}=\left[ \begin{matrix} \ \, 30 & -15 \\  -20 & \ \, 10 \end{matrix} \right]
$
that 
$$
\coker(\tilde{C})=\coker(\tilde{C}^t) \cong \ZZ \oplus \ZZ/5\ZZ.
$$
One has $\tilde{C} \delta=0$ for
$\delta=\left[ \begin{matrix} 1\\2 \end{matrix} \right]$
having $\delta_0=1$, which agrees with
\eqref{perp-presentation}, since
$$
\delta^\perp/\im\tilde{C}^t
=\ZZ\left[ \begin{matrix} -2\\ \ \,1 \end{matrix} \right]
/\left(
\ZZ\left[ \begin{matrix} \ \, 30\\ -15 \end{matrix} \right]
+\ZZ\left[ \begin{matrix} -20\\ \  \, 10 \end{matrix} \right]
\right)
\cong\ZZ/\left(-15\ZZ+10\ZZ\right) 
= \ZZ/5\ZZ.
$$
Removing the $0^{th}$ row and column from $\tilde{C}$ gives
the matrix $C=[10] \in \ZZ^{1 \times 1}$, having
$$
\coker(C) \cong \coker(C^t) = \ZZ/10\ZZ,
$$
which agrees with \eqref{alternate-presentation}, since
$$
\ZZ^2/\left( \ZZ e_0+\im\tilde{C} \right)
=\ZZ^2/\im \left[ \begin{matrix} 1 &  30 & -15 \\ 0& 20 & \ \  10 \end{matrix} \right]
=\ZZ^2/\im \left[ \begin{matrix} 1 & 0 \\0 &  10 \end{matrix} \right]
$$
On the other hand, both \eqref{perp-cokernel-versus-cokernel},
\eqref{cokernel-relation} fail here, because $\tilde{C}^t \gamma=\mathbf{0}$,
where $\gamma=\left[\begin{matrix} 2\\3 \end{matrix}\right]$ 
and $\gamma_0 \neq 1$. 
\end{example}
 
\begin{section}{Review of root systems}
\label{root-system-section}\end{section}

Here we briefly review standard definitions and
facts about root systems  focusing
on what is needed for Sections~\ref{Dynkin-chips-section}
and \ref{abelianization-section}.
Good references are 
Humphreys \cite[\S III.9,10]{Humphreys}, and
Bourbaki \cite[\S IV.1]{Bourbaki456}.  \medskip

\subsection{Basic definitions}
For $\alpha \neq \zero$ in a real vector space $V=\RR^\ell$, with a  
positive definite inner product $(\cdot,\cdot)$,
define  
$
\displaystyle{\alpha^\vee:=\frac{2\alpha}{(\alpha,\alpha)}}
$
so that $(\alpha^\vee)^\vee=\alpha$.   Let 
$s_\alpha: V \rightarrow V$ be the {\it reflection} in the hyperplane 
perpendicular to $\alpha$ given by
$$
s_\alpha(v):=v - (v,\alpha^\vee) \alpha.
$$
Then  $s_\alpha(\alpha)=-\alpha$,  and $s_\alpha$ fixes the hyperplane
$\alpha^\perp$ pointwise.  In particular, $s_\alpha^2=\text{id}_V$.

 \begin{definition} 
A finite subset  $\Phi$ of nonzero vectors in $V$ is a {\it root system} if  \begin{itemize}
\item [\rm{(i)}]  $\Phi$ spans $V$;
\item [\rm{(ii)}]  $s_\alpha(\Phi)=\Phi$ for $\alpha \in \Phi$;
\item [\rm{(iii)}]  $\Phi \cap \RR \alpha=\{\pm \alpha\}$ for  $\alpha \in \Phi$.
\end{itemize}  \end{definition}
Condition (iii) says that the root system is \emph{reduced}.   The finite root systems considered here 
will always be assumed to be reduced.  If,  in addition, $\Phi$ satisfies 

\begin{itemize}
\item[{\rm(iv)}] $(\beta, \alpha^\vee) \in \ZZ$ for all $\alpha,\beta \in \Phi$, 
\end{itemize}
then  $\Phi$ is said to be \emph{crystallographic}.

The reflections $s_\alpha$ for $\alpha$ in a root system $\Phi$  preserve $(\cdot,\cdot)$ and generate
a subgroup $W:=\langle s_\alpha: \alpha \in \Phi \rangle$ 
of the orthogonal group $\mathsf{O}_V((\cdot,\cdot))$ called the {\it Weyl group}.
The set  $\Phi^\vee:=\{\alpha^\vee\}$  
forms another root system, called the {\it dual root system} to $\Phi$,
with $s_{\alpha^\vee}=s_\alpha$  for all $\alpha \in \Phi$, so that  $\Phi$ and $\Phi^\vee$ share the
same Weyl group $W$.

 It is  well known that a root system $\Phi$ always contains
an $\RR$-basis $\{\alpha_1,\ldots,\alpha_\ell\}$ for $V$   of {\it simple roots}
characterized by the property that $\Phi$ has a decomposition 
$\Phi = \Phi_+ \cup (-\Phi_+)$ into {\it positive} and {\it negative} roots, 
where 
$
\Phi_+:=\{\alpha \in \Phi: \alpha=\sum_{i=1}^\ell c_i \alpha_i \text{ with }c_i  \text{ in } \NN \  \text{for all}\ i \}.
$
The Weyl group acts (simply) transitively on the
sets of simple roots.

The {\it Dynkin diagram} of $\Phi$ is the graph with $\ell$ vertices corresponding to the
simple roots $\{\alpha_1,\ldots,\alpha_\ell\}$, 
with the $i$th vertex  joined to the $j$th by $(\alpha_i,\alpha_j^\vee)(\alpha_j,\alpha_i^\vee)$
edges  for $i\neq j$.   When $\alpha_i$ and $\alpha_j$ have different lengths (equivalently, when
$(\alpha_i,\alpha_i) \neq (\alpha_j, \alpha_j)$),  then an arrow is drawn
on the edges  connecting vertices $i$ and $j$ and  pointing to the shorter of the two 
roots.

The  root system $\Phi$  is {\it irreducible}  if
there does not exist a partition $\Phi =\Phi_1 \sqcup \Phi_2$
into two nonempty subsets $\Phi_1,\Phi_2$ having $(\alpha,\beta)=0$
for every $\alpha$ in $\Phi_1$ and $\beta$ in $\Phi_2$.  It is a common occurrence in the study of root systems that in order to prove a result for all root systems, it is sufficient to prove the result in the irreducible case.   Irreducibility  is 
equivalent to connectedness of the Dynkin diagram.

If the inner product $(\cdot,\cdot)$ on $V$  can be scaled so that $\alpha^\vee=\alpha$ 
for all $\alpha$ in $\Phi$, then all roots have the same length,  and $\Phi$ is said to
be {\it simply laced}.  Otherwise,  a (reduced) irreducible root system has  exactly two root lengths. 
The crystallographic condition ensures that every element of $\Phi$ is a $\mathbb Z$-linear combination
of the simple roots and that the $\mathbb{Z}$-span of $\Phi$ determines  a well-defined lattice structure $Q(\Phi)$, the {\it root lattice} of $\Phi$. 

The {\it weight lattice} of a root system $\Phi$
is 
$$
P(\Phi):=\{v \in V: (v,\alpha^\vee) \in \ZZ 
            \text{ for all }\alpha \text{ in }\Phi\}.
$$
For a crystallographic root system $\Phi$, the weight lattice 
$P(\Phi)$ contains the root lattice $Q(\Phi)$  as a (full rank) sublattice
 and has $\{\alpha_1, \dots, \alpha_\ell\}$ as an ordered $\ZZ$-basis.

\subsection{Fundamental weights}
\label{weights-subsection}
Fix an ordered set $\{\alpha_1,\ldots,\alpha_\ell\}$ of simple roots  for the root system $\Phi$.
Then the ordered set of 
{\it simple coroots} $\{\alpha_1^\vee,\ldots,\alpha_\ell^\vee\}$ in  $\Phi^\vee$ forms
a basis for $V$,
{and the {\it fundamental weights} $(\lambda_1,\ldots,\lambda_\ell)$
give the dual basis with respect to $(\cdot,\cdot)$;  that is,
$(\lambda_i,\alpha_j^\vee)=\delta_{i,j}$}.
Thus $\{ \lambda_1,\ldots, \lambda_\ell\}$  determines
an ordered $\ZZ$-basis for $P(\Phi)$,
and we will always identify 
$\ZZ^\ell$ with $P(\Phi)$
via these mutually inverse isomorphisms:
\begin{equation}\label{eq:mutinv} \begin{split}
\begin{array}{rcl} 
\ZZ^\ell & \longrightarrow & P(\Phi) \\
v= [v_1,\ldots,v_\ell]^t & \longmapsto & \lambda:=\sum_{i=1}^\ell v_i \lambda_i \\
 & & \\
P(\Phi) & \longrightarrow &\ZZ^\ell \\
\lambda & \longmapsto & 
  v:= \big[(\lambda, \alpha^\vee_1),\ldots,(\lambda,\alpha^\vee_\ell)\big]^t.\end{array}
\end{split} \end{equation}

\begin{definition}
For a crystallographic root system $\Phi$,  its
{\rm Cartan matrix} $C = (c_{ij})$ in $\ZZ^{\ell \times \ell}$ relative
to the choice of a set  $\{ \alpha_1,\ldots,\alpha_\ell\}$ of ordered simple roots
is given by  $c_{ij}:=(\alpha_i,\alpha_j^\vee)$ for $i,j=1,2,\ldots,\ell$.
Thus, the $i^{th}$ row of $C$
or $i^{th}$ column of $C^t$ expresses the simple root
$\alpha_i$ in the fundamental weights,
that is, $$\alpha_i= \sum_{j=1}^\ell c_{ij} \lambda_j.$$
\end{definition}

The cokernel $\coker(C^t)$ of the transpose $C^t$ of the Cartan matrix  is called
the {\it fundamental group} of $\Phi$, and  can be 
reinterpreted  as
$$
\coker(C^t) \cong P(\Phi)/Q(\Phi),
$$
the weight lattice modulo root lattice.
Note 
$\widehat{\coker(C^t)} 
\cong \coker(C) 
\cong P(\Phi^\vee)/Q(\Phi^\vee).
$
Their common cardinality, 
$
f:=|\coker(C^t)|=|\coker(C)|
$,
is called the {\it index of connection} for $\Phi$.
 
The fundamental weights $\{\lambda_i\}_{i=1}^\ell$ span the
extreme rays of a cone called the (closed) {\it fundamental chamber} or {\it dominant Weyl  chamber},
$$
\begin{aligned}
F&=\{v \in V: (v,\alpha) \geq 0 \text{ for all }\alpha\text{ in }\Phi_+\}\\
&=\{v \in V: (v,\alpha_i) \geq 0 \text{ for }i=1,2,\ldots,\ell \}  \\
&=\{v \in V: (v,\alpha_i^\vee) \geq 0 \text{ for }i=1,2,\ldots,\ell \} \\ 
&=\left\{\sum_{i=1}^\ell c_i \lambda_i : c_i \in \RR_{\geq 0} \right\},
\end{aligned}
$$
and elements of $F \cap P(\Phi)$ are referred to as the {\it dominant weights}. 
 
The cone $F$ forms a fundamental domain for $W$,
meaning that each $W$-orbit intersects $F$ in  exactly  one point.
For $v$ in $V$, one can define a set that quantifies 
how ``far'' $v$ is from $F$
$$
M(v):=\{ \alpha \in \Phi_+: (v,\alpha) < 0\}
$$
so that $F:=\{v \in V: M(v) = \varnothing\}$.
Then one has the following (see for example \cite[Lemma 4.5.2]{BjornerBrenti}): 
for  every $v$ not in $F$,  there is at  least one simple root $\alpha_i$
in $M(v)$, and for any simple root $\alpha_i$ in $M(v)$ 
the inequality  $| M(s_{\alpha_i}(v)) | < | M(v) |$ holds.   

\subsection{Root ordering and highest root}
\label{root-order-section}

There is an important partial ordering on the set $\Phi_+$ of positive roots of
a crystallographic root system, called the {\it root ordering}, defined by
$\alpha \leq \beta$ if $\beta-\alpha =\sum_{i=1}^\ell c_i \alpha_i$
with $c_i \geq 0$.  Here are some of its properties:

\begin{itemize}

\item[(a)] 
Every $\beta$ in $\Phi_+$ has at least one simple
root $\alpha_i$ with $(\beta,\alpha_i)>0$,
else the expansion $\beta=\sum_{i=1}^\ell k_i \alpha_i$
with $k_i \geq 0$ would give the contradiction
$0 < (\beta,\beta) =\sum_{i=1}^\ell k_i (\beta,\alpha_i) \leq 0$.
Then this $\alpha_i$  satisfies either
$\beta=\alpha_i$ or $\beta-\alpha_i$ in $\Phi_+$ (and $\beta-\alpha_i <\beta$
in root order);
see \cite[Lemma 9.4]{Humphreys}.  

\item[(b)] There are (at most) 
two roots within the fundamental chamber $F$: 
\begin{itemize}
\item  
the {\it highest root} $\tilde \alpha  :=\tilde{\alpha}(\Phi)$, which is the unique
maximum element relative to the root order on $\Phi_+$;
\item the {\it highest short root} $\alpha^*:=  \alpha^\ast(\Phi)$ in $\Phi_+$,
characterized  by the property that its corresponding coroot
$(\alpha^*)^\vee=\tilde{\alpha}(\Phi^\vee)$ is the highest root
in the dual root system $\Phi^\vee$, with respect to $\{\alpha^{\vee}_i\}_{i=1}^\ell$.  
\end{itemize}
\end{itemize}
Moreover,  $\alpha^*=\tilde{\alpha}$ if and only if $\Phi$ is  simply laced.

\begin{example} \rm \
In the root system $\Phi$ of type $\BR_\ell$ ($\ell \geq 2$), if one chooses as the
simple roots 
$$
\{e_1-e_2,e_2-e_3,\ldots,e_{\ell-1}-e_\ell,e_\ell\} 
$$
(using the identifications in \eqref{eq:mutinv}), 
then
$
\tilde{\alpha}=e_1+e_2 \neq 2e_1=\alpha^*.
$
The root order on $\BR_4$ is displayed below:
$$
\xymatrix@C=3pt@R=6pt{
         &       &       &   &       &       &\tilde{\alpha}=e_1+e_2\\
         &       &       &   &       &e_1+e_3\ar@{-}[ur]& \\
         &       &       &   &e_1+e_4\ar@{-}[ur]& &e_2+e_3\ar@{-}[ul]\\
         &       &       &\alpha^*=e_1\ar@{-}[ur]&  &e_2+e_4\ar@{-}[ur] \ar@{-}[ul]& \\
         &       &e_1-e_4\ar@{-}[ur]& &e_2\ar@{-}[ur] \ar@{-}[ul]& &e_3+e_4\ar@{-}[ul]\\
         &e_1-e_3\ar@{-}[ur]& &e_2-e_4\ar@{-}[ur] \ar@{-}[ul]& &e_3 \ar@{-}[ur] \ar@{-}[ul]&\\
\alpha_1 =e_1-e_2 \ar@{-}[ur]& &\alpha_2 =e_2-e_3 \ar@{-}[ur] \ar@{-}[ul]& &\alpha_3 =e_3-e_4\ar@{-}[ur] \ar@{-}[ul]& &\alpha_4 =e_4\ar@{-}[ul]
}
$$
\end{example}

\noindent
Note  it is a consequence of the fact that
$\tilde{\alpha},\alpha^*$ belong to $F$ that their  expansions relative to 
fundamental  weights have nonnegative coefficients:
\begin{equation}
\label{positive-expansions-for-highest-roots}
\begin{aligned}
\tilde{\alpha} &= \sum_{i=1}^\ell q_i \lambda_i, 
  \quad \text{ with }q:=[q_1,\ldots,q_\ell]^t \geq \zero,\\
\alpha^* &= \sum_{i=1}^\ell q^*_i \lambda_i, 
  \quad \text{ with }q^*:=[q^*_1,\ldots,q^*_\ell]^t \geq \zero.\\
\end{aligned}
\end{equation}

\subsection{ Extended Cartan matrix}
\label{extended-Cartan-matrix-section}

\begin{definition}
For a finite, crystallographic, irreducible root system $\Phi$,
 the {\rm extended Cartan matrix} $\tilde{C}$ in $\ZZ^{(\ell+1) \times (\ell+1)}$
is given by $$\tilde{C}_{ij}:=(\alpha_i,\alpha_j^\vee)$$ for $i,j=0,1,2,\ldots,\ell$,
where $\alpha_0:=-\tilde{\alpha}$.
\end{definition}

While the Cartan matrix $C$ is nonsingular, the extended Cartan matrix
will, by its definition, have these left/right nullspaces:
\begin{equation} \label{eq:nullspace}
\begin{array}{rllll}
\text{If } \ \tilde{\alpha}& =\tilde{\alpha}(\Phi)=&\displaystyle \sum_{i=1}^\ell \delta_i \alpha_i,
 &\text{ then } \ker(\tilde{C}^t)=\RR \delta 
    &\text{ where }\delta:=[1,\delta_1,\ldots,\delta_\ell]^t \in \mathbb N^{\ell+1}.\\
\text{If } \alpha^*(\Phi^\vee)& =\tilde{\alpha}(\Phi)^\vee=&\displaystyle \sum_{i=1}^\ell \phi_i \alpha^\vee_i,
  &\text{ then } \ker(\tilde{C})=\RR \phi 
    &\text{ where }\,\phi:=[1,\phi_1,\ldots,\phi_\ell]^t \in \mathbb N^{\ell+1}.
\end{array}
\end{equation} 
The  coordinates of this last vector $\phi$ give the ``marks''  labeling the nodes on the affine diagrams
in Kac \cite[Ch. 4, Table Aff 1]{Kac}.
When $\Phi$ is simply laced,  then 
$\tilde{\alpha}(\Phi)=\tilde{\alpha}(\Phi^\vee)
=\alpha^*(\Phi)=\alpha^*(\Phi^\vee)$
and $\delta=\delta^\vee=\phi$.
\medskip

\subsection{Weyl vector and minuscule weights}

 There is a distinguished element $\varrho$ in the fundamental chamber $F$,  the so-called  {\it Weyl vector}, which
is  the half-sum of all the positive roots.  Since for each  simple root $\alpha_i$, the corresponding reflection $s_{\alpha_i}$ permutes the set $\Phi_+ \setminus \{\alpha_i\}$, one has 
$$\varrho - (\varrho, \alpha_i^\vee) \alpha_i = s_{\alpha_i}(\varrho) = \frac{1}{2}\sum_{\alpha \in \Phi_+\setminus \alpha_i} \alpha \, -\, \frac{1}{2}\alpha_i = \varrho - \alpha_i,$$
implying that $(\varrho, \alpha_i^\vee) = 1$ for all $i$.   Then it follows from the duality of the bases $\{\alpha_i^\vee\}$ and
$\{\lambda_i\}$ that the expansion for $\varrho$ in terms of the fundamental weights  is given by 
\begin{equation}
\label{half-sums-of-roots-coroots}
\varrho:=\varrho(\Phi)
  := \frac{1}{2} \sum_{\alpha \in \Phi_+}  \alpha
        = \sum_{i=1}^\ell \lambda_i.
        \end{equation}

\begin{definition} A weight $\lambda$ in $P(\Phi)$ is {\rm minuscule} if 
$(\lambda,\alpha^\vee)$ lies in $\{-1,0,1\}$ for all $\alpha$ in $\Phi$.
\end{definition}
 
The minuscule weights $\lambda$ which are {\it dominant}, 
that is, lie in $F$, 
have important properties.  We collect some of
them here, compiling exercises from
Bourbaki \cite[Exer. VI.1.24(a,c),  VI.2.2, VI.2.5(a,d)]{Bourbaki456}
and
Humphreys \cite[Exer. III.13.13]{Humphreys}, 
some worked out by Stembridge in \cite[\S 1.2]{Stembridge}.

\begin{proposition}
\label{minuscule-dominant-properties}
Let $\Phi$ be a finite, crystallographic, irreducible root system.

\begin{enumerate}

\item[{\rm (a)}] 
The minuscule dominant weights
are exactly the fundamental weights $\lambda_i$ having coefficient
$\delta_i^\vee=1$ in this expansion:
\begin{equation}
\label{minuscule-dominant-characterization}
\tilde{\alpha}(\Phi^\vee)=\sum_{i=1}^\ell \delta^\vee_i \alpha_i^\vee.
\end{equation}

\item[{\rm(b)}] 
There are $f-1$ minuscule dominant weights if 
 $f=|P(\Phi)/Q(\Phi)|$ is the index of connection.

\item[{\rm(c)}] 
The zero vector $\mathbf{0}$ together with
the minuscule dominant weights give a system of
coset representatives for the nonzero cosets 
in $\coker(C^t)=P(\Phi)/Q(\Phi)$.

\item[{\rm(d)}] 
The zero vector $\mathbf{0}$ and
the minuscule dominant weights can be
characterized as follows:  each is the
unique element $\lambda$ in $F \cap P(\Phi)$ 
in its coset $\lambda+Q(\Phi)$
which is minimal in the {\it root
ordering} on $F \cap P(\Phi)$:
if $\mu$ in $F \cap P(\Phi)$
has $\lambda-\mu=\sum_{i=1}^\ell z_i \alpha$ with $z_i$ in $\NN$,
then $\mu=\lambda$.
\end{enumerate}
\end{proposition}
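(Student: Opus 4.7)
The plan is to establish (a) first, then derive (c) and (d) via pairing with $\tilde{\alpha}(\Phi^\vee)$, and handle (b) by an independent geometric argument.

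For (a), I would fix a dominant weight $\lambda = \sum_i n_i \lambda_i \in F \cap P(\Phi)$ with each $n_i \in \NN$, and argue that the maximum of $(\lambda, \alpha^\vee)$ over $\alpha \in \Phi$ is attained at $\alpha^\vee = \tilde{\alpha}(\Phi^\vee)$. This is because every positive coroot $\alpha^\vee$ satisfies $\tilde{\alpha}(\Phi^\vee) - \alpha^\vee \in \sum_i \NN \alpha_i^\vee$ (the highest root is the top of the root order on $\Phi^\vee_+$), and each simple coroot pairs nonnegatively with the dominant $\lambda$; negative coroots yield nonpositive pairings. Expanding via \eqref{minuscule-dominant-characterization} gives $(\lambda, \tilde{\alpha}(\Phi^\vee)) = \sum_i \delta_i^\vee n_i$, so $\lambda$ is minuscule iff this sum is at most $1$. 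Since each $\delta_i^\vee$ is a positive integer and each $n_i \in \NN$, the solutions are exactly $\lambda = \zero$ and $\lambda = \lambda_i$ for those $i$ with $\delta_i^\vee = 1$, yielding (a).

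For the forward direction of (d), suppose $\lambda \in \{\zero\} \cup \{\lambda_i : \delta_i^\vee = 1\}$ and $\mu \in F \cap P(\Phi)$ with $\lambda - \mu = \sum_i z_i \alpha_i$, $z_i \in \NN$. Pairing with $\tilde{\alpha}(\Phi^\vee)$ gives $1 \geq (\lambda, \tilde{\alpha}(\Phi^\vee)) \geq (\mu, \tilde{\alpha}(\Phi^\vee)) \geq 0$ since the simple roots pair nonnegatively with $\tilde{\alpha}(\Phi^\vee) \in F$, so $\mu$ is itself $\zero$ or minuscule dominant by (a); a short examination of which elements of $\lambda + Q(\Phi)$ satisfy $\lambda - \mu \in \sum \NN \alpha_i$ then forces $\mu = \lambda$. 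For the converse (non-minuscule nonzero $\mu$ is not minimal in its coset), I would construct a strictly smaller $\mu' \in F \cap P(\Phi)$ in the same coset: since $(\mu, \tilde{\alpha}(\Phi^\vee)) \geq 2$, the element $\mu - \tilde{\alpha}(\Phi)$ fails to lie in $F$ only across certain simple-root walls, and iteratively reflecting across those walls (staying in the same $Q(\Phi)$-coset since simple reflections alter a weight by an integer multiple of a simple root) descends in root order. Part (c) then follows by noting that each coset of $Q(\Phi)$ in $P(\Phi)$ meets $F \cap P(\Phi)$ (translate any representative by $W$ into $F$), and the root order restricted to any one coset is well-founded enough to produce a minimum, which by (d) must be $\zero$ or minuscule dominant.

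For (b), the plan is to invoke the classical fact that $\Omega := P(\Phi^\vee)/Q(\Phi^\vee)$, a group of order $f$, acts faithfully by diagram automorphisms of the extended Dynkin diagram of $\Phi$ and acts simply transitively on $\{i \in \{0, 1, \ldots, \ell\} : \delta_i^\vee = 1\}$ (setting $\delta_0^\vee := 1$). This yields exactly $f$ such nodes and hence $f-1$ in $\{1, \ldots, \ell\}$; alternatively, (b) can be verified case-by-case against the tables in \cite{Kac} or \cite{Bourbaki456}. The main obstacle I anticipate is the converse direction of (d): producing a strictly smaller coset representative inside $F$ from a non-minuscule nonzero dominant $\mu$ is not purely formal, because subtracting $\tilde{\alpha}(\Phi)$ may leave $F$ and the reflections needed to reenter $F$ must be shown not to cancel the descent. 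Rather than attempt this explicitly, the cleanest route may be to combine (a), (b), and the uniqueness half of (d) with a pigeonhole argument: $f$ cosets of $Q(\Phi)$ versus exactly $f$ candidate minimal representatives ($\zero$ together with the $f-1$ minuscule dominant weights by (b)) forces each coset to contain exactly one, simultaneously giving (c) and the converse of (d).
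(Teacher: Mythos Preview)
The paper does not actually prove Proposition~\ref{minuscule-dominant-properties}; it records the result as a compilation of exercises from Bourbaki \cite[Exer.~VI.1.24, VI.2.2, VI.2.5]{Bourbaki456} and Humphreys \cite[Exer.~III.13.13]{Humphreys}, pointing to Stembridge \cite[\S1.2]{Stembridge} for worked arguments.  So there is no in-paper proof to compare against, and your proposal has to stand on its own.

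Your argument for (a) is correct and is the standard one.  Your two suggested routes to (b)---diagram automorphisms of the affine diagram acting simply transitively on the nodes with $\delta_i^\vee=1$, or a case check against the tables---are both legitimate.

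The genuine gap is in (d) and in the pigeonhole workaround.  Your ``forward direction'' of (d) shows only that if $\lambda$ is a candidate (zero or minuscule dominant) and $\mu\in F\cap P(\Phi)$ lies strictly below $\lambda$, then $\mu$ is again a candidate.  The ``short examination'' meant to force $\mu=\lambda$ is exactly the assertion that no two distinct candidates are comparable in root order, equivalently that they lie in distinct cosets of $Q(\Phi)$---which is (c).  And the pigeonhole does not rescue this: for ``$f$ cosets versus $f$ candidates forces a bijection'' you need each coset to contain at least one candidate, and that is precisely the converse of (d) (every non-minuscule nonzero dominant weight has something strictly below it in $F\cap P(\Phi)$), which you yourself flagged as the obstacle.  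So the circle does not close: (a), (b), and your partial forward step together do not yield (c) or the full (d).

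The missing ingredient, supplied in \cite[Exer.~VI.1.24]{Bourbaki456} and \cite[\S1.2]{Stembridge}, is a direct proof of the converse of (d): given dominant $\mu\neq\zero$ with $(\mu,\alpha^\vee)\geq 2$ for some $\alpha\in\Phi_+$, one chooses such an $\alpha$ extremal in the root order and checks that $\mu-\alpha$ is again dominant, giving a strict descent within the coset.  Your sketch (subtract $\tilde\alpha(\Phi)$ and reflect back into $F$) runs into the difficulty you anticipated---the reflections can undo the descent---and also conflates $\tilde\alpha(\Phi)$ with $\tilde\alpha(\Phi^\vee)$ in the non-simply-laced case.  Once the converse of (d) is in hand directly, well-foundedness gives each coset a minimal dominant representative which must be a candidate; then (b) and the count give (c), and uniqueness in (d) follows.
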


\begin{section}{Chip firing with Cartan matrices}
\label{Dynkin-chips-section}\end{section} 

Fixing a finite, crystallographic, irreducible root system $\Phi$,
and a choice of simple roots $\alpha_1,\ldots,\alpha_\ell$,
we wish to consider its Cartan matrix $C$ in $\ZZ^{\ell\times \ell}$ 
as a $Z$-matrix, and do topplings with respect to $C$.
If $e_1,\ldots, e_\ell$ are  the standard unit basis vectors in $\ZZ^\ell$,
then under our usual root system identification  
$\ZZ^\ell \longleftrightarrow P(\Phi)$ in \eqref{eq:mutinv}  sending 
$v \mapsto \sum_{i=1}^\ell v_i \lambda_i$, 

\begin{itemize}
\item one identifies $e_i=\lambda_i$,
so that an avalanche operator acts as $X_i(v)=v + \lambda_i$, 
\item one identifies  
$q=\tilde{\alpha}$ and $q^*=\alpha^*$
where $q, q^*$ are defined in \eqref{positive-expansions-for-highest-roots},
\item one identifies $\one = \varrho$ due to 
\eqref{half-sums-of-roots-coroots}, 
\item a firing/toppling at some node $i \in \{1,\dots, \ell\}$ attempts to replace
$v$ with $v-\alpha_{i}$, using the 
interpretation of the entries of  row $i$ of $C$  as the coordinates of $\alpha_i$ with respect
to the fundamental weights, 
but it gives a {\it valid} toppling if and only if $v_{i}=(v,\alpha_{i}^\vee) \geq 2=c_{ii}$.  
\end{itemize}

\subsection{Identifying the recurrent,  superstable,  and burning configurations of a Cartan matrix}
The following proposition seems  well known, 
but is only implicit in 
Postnikov and Shapiro \cite[Proof of Prop.~13.1]{PostnikovShapiro} and
proven by Kac \cite[Chap.~4]{Kac} in different language.

\begin{proposition} The
Cartan matrix $C$ of  a finite, crystallographic, irreducible
root system is an avalanche-finite matrix.
\end{proposition}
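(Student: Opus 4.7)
The plan is to verify condition (iii) of Proposition~\ref{toppling-equivalences-prop}, namely to produce a positive diagonal matrix $D$ making $DC+(DC)^t$ positive definite. This is the cleanest route in the root-system setting because a Cartan matrix of finite type is classically symmetrizable.

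First I would check that $C$ is a $Z$-matrix. For distinct simple roots $\alpha_i$ and $\alpha_j$, the pairing $(\alpha_i,\alpha_j)$ is nonpositive---a standard property of any base of a finite root system---so the entry $c_{ij}=(\alpha_i,\alpha_j^\vee)=2(\alpha_i,\alpha_j)/(\alpha_j,\alpha_j)$, which differs from $(\alpha_i,\alpha_j)$ by the positive scalar $2/(\alpha_j,\alpha_j)$, is also nonpositive.

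Next I would take the positive diagonal matrix $D$ with entries $d_i:=1/(\alpha_i,\alpha_i)$. A direct calculation gives
$$(DC)_{ij}=d_ic_{ij}=\frac{2(\alpha_i,\alpha_j)}{(\alpha_i,\alpha_i)(\alpha_j,\alpha_j)},$$
which is manifestly symmetric in $i$ and $j$. Hence $DC+(DC)^t=2DC$ equals the Gram matrix $[(\alpha_i^\vee,\alpha_j^\vee)]$ of the simple coroots $\alpha_1^\vee,\ldots,\alpha_\ell^\vee$. Since these form an $\RR$-basis for $V$ and $(\cdot,\cdot)$ is positive definite on $V$, their Gram matrix is positive definite, and condition (iii) is verified.

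I do not anticipate any serious obstacle: the crux is the symmetrization step, which is really just the classical observation that the Cartan matrix scales on the left to the Gram matrix of a basis of $V$. An alternative route would be to verify condition (vi) by taking $r$ to be the coefficients of $\varrho^\vee:=\sum_i\lambda_i^\vee$ in the simple-coroot basis, since then $(Cr)_i=(\alpha_i,\varrho^\vee)=1>0$; but this presupposes that those coefficients are strictly positive, a fact essentially equivalent to the nonnegativity of $C^{-1}$ and hence less elementary than the symmetrization argument.
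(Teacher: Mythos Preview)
Your proof is correct. You verify condition~(iii) of Proposition~\ref{toppling-equivalences-prop} by symmetrizing: with $d_i=1/(\alpha_i,\alpha_i)$ the matrix $DC$ becomes $\tfrac12[(\alpha_i^\vee,\alpha_j^\vee)]$, hence $DC+(DC)^t$ is the Gram matrix of a basis of $V$ and is positive definite.

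The paper takes a different route, essentially the alternative you sketch at the end. It verifies condition~(vi) for $C^t$ (hence (ii), hence (i)): taking $r$ to be the coefficient vector of $\varrho$ in the simple-root basis, one has $r^tC=\one$ by \eqref{half-sums-of-roots-coroots}, so $C^t r=\one>\zero$. Your symmetrization argument is self-contained once one knows the simple roots are a basis with nonpositive mutual pairings; the paper's argument instead relies on $r>\zero$, i.e.\ on the coefficients of $\varrho$ in the $\alpha_i$ being strictly positive. Your worry that this last fact is ``essentially equivalent to the nonnegativity of $C^{-1}$'' is a bit overstated: it follows directly from $\varrho=\tfrac12\sum_{\alpha\in\Phi_+}\alpha$, since each simple root $\alpha_i$ already contributes $\tfrac12$ to the $i$th coefficient. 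So both routes are short and elementary; yours has the virtue of making no appeal to $\varrho$ at all, while the paper's has the virtue of tying the proof to the same object that later governs the recurrent configurations.
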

\begin{proof}
By Proposition~\ref{toppling-equivalences-prop}(ii,v), it suffices to 
note that the coefficient vector $r (> \zero)$ of
$\varrho= \sum_{i=1}^\ell r_i \alpha_i$ has the entries of $r^t C$ expressing 
$\varrho$ in the basis $\{\lambda_i\}_{i=1}^\ell$, and hence 
$r^tC=\one (> \zero)$ by \eqref{half-sums-of-roots-coroots}.\footnote{An alternative way to prove this is to use the fact that the matrix $C^{-1}$ expressing the $\lambda_i$ in terms of the simple roots has nonnegative coefficients
as seen in \cite[Table 1, Sec. 13.2]{Humphreys}. Of course, the two statements are
equivalent by  Proposition~\ref{toppling-equivalences-prop}.}
\end{proof} 

\begin{proposition}\label{P:burn}  The burning configurations for a
Cartan matrix $C$ of  a finite, crystallographic, irreducible
root system $\Phi$ are the nonzero elements $b$ of the fundamental chamber $F$ that lie in the root lattice $Q(\Phi)$.
\end{proposition}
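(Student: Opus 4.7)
The plan is to translate both clauses of Definition~\ref{burning-configuration-definition} through the identification $\ZZ^\ell \longleftrightarrow P(\Phi)$ given by $v \longmapsto \sum_i v_i\lambda_i$, using the dictionary laid out in the bullets at the start of Section~\ref{Dynkin-chips-section}.

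For clause (i), I would first observe that the $i^{th}$ column of $C^t$ has entries $(c_{i1},\ldots,c_{i\ell})^t$, which under the identification corresponds to $\sum_j c_{ij}\lambda_j = \alpha_i$. Hence $\im(C^t)$ is exactly the $\ZZ$-span of the simple roots, i.e., the root lattice $Q(\Phi)$. Combined with the requirement $b \in \NN^\ell$, which under the identification is exactly $F \cap P(\Phi)$ (the dominant weights, by the characterization of $F$ in Section~\ref{weights-subsection}), clause (i) becomes $b \in F \cap Q(\Phi)$.

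For clause (ii), I would note that the off-diagonal entry $c_{ij}=(\alpha_i,\alpha_j^\vee)$ is strictly negative precisely when $\alpha_i$ and $\alpha_j$ are joined in the Dynkin diagram, and that $c_{ij}<0$ if and only if $c_{ji}<0$. Therefore the digraph $D(C)$ is the Dynkin diagram with each edge replaced by a pair of opposite arcs. Since $\Phi$ is irreducible, the Dynkin diagram is connected, so $D(C)$ is strongly connected. Consequently, clause (ii) reduces to the single requirement that $\supp(b)$ be nonempty, i.e., $b \neq \zero$.

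Combining the two translations gives exactly the stated characterization: the burning configurations are the nonzero elements of $F \cap Q(\Phi)$. There is no real obstacle here; the only thing to be careful about is keeping the roles of $C$ and $C^t$ straight (so that $\im(C^t)$, not $\im(C)$, is $Q(\Phi)$) and noting that irreducibility, invoked exactly once, is what turns clause (ii) into the mild nondegeneracy condition $b\neq \zero$.
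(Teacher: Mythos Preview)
Your proof is correct and follows essentially the same approach as the paper: both translate condition~(i) of Definition~\ref{burning-configuration-definition} together with $b\in\NN^\ell$ into $b\in F\cap Q(\Phi)$ via the identification $\im(C^t)=Q(\Phi)$, and both reduce condition~(ii) to $b\neq\zero$ using connectedness of the Dynkin diagram. Your extra remark that $c_{ij}<0\iff c_{ji}<0$ (so $D(C)$ is strongly connected) is a nice detail that the paper leaves implicit.
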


\begin{proof}   If $b \in F \cap Q(\Phi)$,   then $b$ is  the image under  $C^t$  of an element of $\ZZ^\ell$ (namely, its coefficient vector relative to the simple roots), 
and  $b \geq \zero$ because $b$ is an element of $F$.     When $b \neq \zero$, there is 
a path from node $i$ in $\supp(b)$ to any node $j$  of the Dynkin diagram, because the diagram is connected. 
The conditions of Theorem \ref{thm:burning} are satisfied, and $b$ is a burning configuration.
Conversely, if $b \in \ZZ^\ell$ is a burning configuration for $C$, then $b \geq \zero$, and $b =  C^t z$  for some $z \in \ZZ^\ell$ so that $b \in F \cap Q(\Phi)$.   \end{proof}

\begin{remark} \rm \
\label{high-roots-are-burning-remark}
The highest root $\tilde \alpha$ and highest short root $\alpha^*$ of $\Phi$ lie in $F \cap Q(\Phi)$
and hence give burning configurations for any  finite, crystallographic, irreducible
root system $\Phi$.      When $\det(C) = 1$,  (i.e. when $\Phi$ is of type $\ER_8$, $\FR_4$, or $\GR_2$), so that $P(\Phi)=Q(\Phi)$,
the burning configurations for $\Phi$ are exactly the dominant weights $\big(F \cap P(\Phi)\big)\setminus \{\zero\}$. 

 When using the highest root $\tilde \alpha = \sum_{i=1}^\ell \delta_i\alpha_i$  as a burning configuration, 
Remark~\ref{number-of-firings-remark} implies that
the number of topplings from $v+\tilde{\alpha}$ to $v$ will be $\height(\tilde \alpha) := \sum_{i=1}^\ell \delta_i  = \cox-1$, 
where $\cox$  is the {\it Coxeter number},   
$$
{\cox}: = \frac{|\Phi|}{\ell},
$$
 which is also equal to the order of a {\it Coxeter element}
$w = s_{\alpha_1} \cdots s_{\alpha_\ell} \in W$. (See for example, \cite[Sec.~3.18]{Humphreys2}, for some of its other incarnations.)
Similarly, when $\alpha^*$ is used as a burning
configuration, the number of topplings from $v + \alpha^*$ to $v$ is given by the height  $\height(\alpha^*)$ of $\alpha^*$.
\end{remark} 

Our main result of this section is 
Theorem~\ref{Cartan-recurrents-theorem}, which  
identifies the recurrent and superstable
configurations for a Cartan matrix $C$.

\vskip.1in
\noindent
{\bf Theorem~\ref{Cartan-recurrents-theorem}.}
{\it 
The Cartan matrix $C$ of a finite, crystallographic,  irreducible
root system has as its
\begin{itemize}
\item[{\rm (i)}] superstable configurations the zero vector
$\mathbf{0}$ and the minuscule dominant weights $\lambda_i$, 
\item[{\rm (ii)}] recurrent configurations $\one=\varrho$ and 
$\one-e_i=\varrho-\lambda_i$
for all minuscule dominant weights $\lambda_i$.
\end{itemize}
}
 
 \medskip
 
The assertions (i) and (ii)  in the theorem are equivalent via
Theorem~\ref{superstable-recurrent-duality}, since $c_{ii}=2$ for $i=1,2,\ldots,\ell$, so that $v^C=\one=\varrho$.  We will give two proofs:
the first proves (i) and has the advantage of brevity, while the second
proves (ii) and has the advantage of identifying certain
stabilization sequences with other known combinatorial
objects-- see Remarks~\ref{max-chains-in-root-order-remark} and \ref{numbers-game-remark} below.

\begin{proof}[Proof of Theorem~\ref{Cartan-recurrents-theorem} 
using (i)]
Our identification of $\ZZ^\ell$ with the weight lattice
$P(\Phi)$ identifies
$\NN^\ell$ with the set $F \cap P(\Phi)$ of dominant weights.  Recall
from Definition~\ref{superstable-definition} that the superstable configurations
are the $u$ in $\NN^\ell$ for which $z \in \NN^\ell$ together 
with $u-C^tz$ lying in $\NN^\ell$ forces $z=\zero$.  Such $u$ then correspond to $\lambda$ in  $F \cap P(\Phi)$ for which
$z =  [z_1,\dots, z_\ell]^t \in \NN^\ell$ together with
 $\lambda-\sum_{i=1}^\ell z_i \alpha_i=:\mu$ lying in $F \cap P(\Phi)$
forces $\lambda=\mu$.  But this is exactly the characterization of  $\mathbf{0}$
and the minuscule dominant weights given by
Proposition~\ref{minuscule-dominant-properties}\,(d).
\end{proof}

\begin{proof}[Proof of Theorem~\ref{Cartan-recurrents-theorem} using (ii)]   
We first explain why the assertions in   (ii)  follow once we show
\begin{equation}\label{rho-is-recurrent}  \varrho=\stab_C(\varrho+\tilde{\alpha}),  \quad \hbox{\rm and} \end{equation} 
\begin{equation}\label{rho-minus-minuscule-is-recurrent}  \varrho-\lambda_i =\stab_C\left( (\varrho-\lambda_i) +\alpha^*\right)\end{equation} for $\lambda_i$ any minuscule dominant weight.   
 
Indeed, since Remark~\ref{high-roots-are-burning-remark} shows
that $\tilde{\alpha}$ and $ \alpha^*$ are both burning configurations,
\eqref{rho-is-recurrent} and  \eqref{rho-minus-minuscule-is-recurrent}
would  show that the elements in (ii) are $C$-recurrent. 
As the set of elements in (ii)
has cardinality $f=|\coker(C^t)|$ from 
Proposition~\ref{minuscule-dominant-properties}(b),
we would then know from 
Theorem~\ref{recurrents-represent-cosets-thm} that there
are no other $C$-recurrent configurations.
\vskip.1in
\noindent
{\sf Proof of  \eqref{rho-is-recurrent}.}
Use Property (a) of the root ordering from Section~\ref{root-order-section} 
repeatedly to express the highest root  as
$$
 \begin{array}{rllll}
\varrho+\tilde{\alpha}
 &=&\varrho +\beta_{\cox-1}
 &=&\varrho+ \alpha_{i_1}+\alpha_{i_2}+ \cdots 
 +\alpha_{i_{\cox-2}}+\alpha_{i_{\cox-1}}\\
&\overset{{\text{topple }i_{\cox-1}}}{\longrightarrow}&
  \varrho +\beta_{\cox-1}
 &=&\varrho+ \alpha_{i_1}+\alpha_{i_2}+ \cdots 
 +\alpha_{i_{\cox-2}} \\
& \vdots\\
&\overset{{\text{topple }i_3}}{\longrightarrow}&
  \varrho +\beta_{2}
 &=&\varrho+ \alpha_{i_1}+\alpha_{i_2}\\
&\overset{{\text{topple }i_2}}{\longrightarrow}&
  \varrho +\beta_{1}
 &=&\varrho+ \alpha_{i_1}\\
&\overset{{\text{topple }i_1}}{\longrightarrow}&
   \varrho.
  & &
\end{array}
$$
The reason this works is 
$
v=\varrho +\beta_{k} 
$
has
$
v_{i_k}=(\varrho +\beta_{k}, \alpha_{i_k}^\vee)
=(\varrho, \alpha_{i_k}^\vee) +(\beta_{k}, \alpha_{i_k}^\vee) \geq 2,
$
since $(\varrho, \alpha_{i_k}^\vee)=1$ by \eqref{half-sums-of-roots-coroots}, 
and the integer $(\beta_k, \alpha^\vee_{i_k})$ is strictly positive 
by construction.  Thus,  $\varrho = \stab_C(\varrho + \tilde \alpha)$ as claimed.

\vskip.1in
\noindent
{\sf Proof of  \eqref{rho-minus-minuscule-is-recurrent}.}
Fix a minuscule dominant weight $\lambda$, that is, a minuscule
weight lying in the fundamental chamber $F$.  Section~\ref{root-order-section}
characterizes $\lambda$ as a fundamental weight $\lambda_i$ 
for which \eqref{minuscule-dominant-characterization} gives 
$
1=\delta^\vee_i
=(\lambda_i,\tilde{\alpha}(\Phi^\vee))
=(\lambda_i,(\alpha^*)^\vee).
$
Consequently, one has
$$
s_{\alpha^*}(\lambda)=\lambda-(\lambda,(\alpha^*)^\vee) \alpha^*
=\lambda-\alpha^*.
$$
Now set $u^{(0)}:=s_{\alpha^*}(\lambda)$.  Using induction
on the cardinality of the set $M(u^{(k)})$ defined in 
Section~\ref{weights-subsection}, 
we can create a sequence
$u^{(0)},u^{(1)},\ldots,u^{(m)}$ eventually ending with $u^{(m)}$ in $F$, 
where $m = \height(\alpha^*)$ and $u^{(k)}= s_{\alpha_{i_k}}(u^{(k-1)})$ for some simple root
$\alpha_{i_k}$ having $(u^{(k-1)},\alpha_{i_k})<0$.
Note that at each stage $k=1,2,\ldots,m$, 
one has  $u^{(k)}=w_{(k)}(\lambda)$ for an element 
$
w_{(k)}:= s_{\alpha_{i_k}} \cdots  s_{\alpha_{i_2}} s_{\alpha_{i_1}} s_{\alpha^*}
$
of the Weyl group $W$.  Therefore, the inner product
$$
(\alpha_{i_k}^\vee, u^{(k-1)})
=(\alpha_{i_k}^\vee, w_{(k)}(\lambda) )
=(w_{(k)}^{-1}(\alpha_{i_k}^\vee) , \lambda)
$$
always lies in $\{-1,0,1\}$, because $\lambda$ is minuscule.  
However, $(\alpha_{i_k},u^{(k-1)})<0$ by construction, so
\begin{equation}
\label{firing-minus-one-in-numbers-game}
(\alpha_{i_k}^\vee, u^{(k-1)})=-1.
\end{equation}
Therefore 
$
u^{(k)}= s_{\alpha_{i_k}}(u^{(k-1)})
=u^{(k-1)} -( u^{(k-1)}, \alpha_{i_k}^\vee)\alpha_{i_k}
=u^{(k-1)}+\alpha_{i_k}.
$
Thus, we obtain a sequence
\begin{equation}
\label{numbers-game-winning-sequence}
\begin{array}{rllll}
u^{(0)}&:=&s_{\alpha^*}(\lambda)&=&\lambda-\alpha^* \\
u^{(1)}&:=&s_{i_1} s_{\alpha^*}(\lambda)
         &=&\lambda-\alpha^*+\alpha_{i_1} \\
u^{(2)}&:=&s_{i_2} s_{i_1} s_{\alpha^*}(\lambda)
         &=&\lambda-\alpha^*+(\alpha_{i_1}+\alpha_{i_2}) \\
\vdots& & & & \\
u^{(m)}&:=&s_{i_m} \cdots s_{i_2} s_{i_1} s_{\alpha^*}(\lambda)
         &=&\lambda-\alpha^*+(\alpha_{i_1}+\alpha_{i_2}+\cdots+\alpha_{i_m})=\lambda,
\end{array}
\end{equation}
where $u^{(m)}=\lambda$ follows,  because $u^{(m)}$ lies in 
both the $W$-orbit of $\lambda$ and the  fundamental chamber  $F$.

We then claim $v^{(k)}:=\varrho-u^{(k)}$ for $k=0,1,2,\ldots,m$
gives this valid $C$-toppling sequence, showing
\eqref{rho-minus-minuscule-is-recurrent}:
$$
\begin{array}{rlll}
v^{(0)}&=&\varrho-\lambda+\alpha^*,\\
\overset{\text{topple }i_1}{\longrightarrow}
 v^{(1)}&=&\varrho-\lambda+\alpha^*-\alpha_{i_1},\\
\overset{\text{topple }i_2}{\longrightarrow}
 v^{(2)}&=&\varrho-\lambda+\alpha^*-(\alpha_{i_1}+\alpha_{i_2}),\\
\vdots& \\
\overset{\text{topple }i_m}{\longrightarrow}
 v^{(m)}&=&\varrho-\lambda+ \alpha^*
           -(\alpha_{i_1}+\alpha_{i_2}+\cdots+\alpha_{i_m})
=\varrho-\lambda.\\
\end{array}
$$
These are valid topplings,  because for each $k=1,2,\ldots,m$,
one has using \eqref{firing-minus-one-in-numbers-game} 
$$
v_{i_k}^{(k-1)}
= (\alpha_{i_k}^\vee, v^{(k-1)}) 
= (\alpha_{i_k}^\vee, \varrho - u^{(k-1)}) 
=(\alpha_{i_k}^{\vee},\varrho)-(\alpha_{i_k}^{\vee},u^{(k-1)})
=1-(-1)=2. \qedhere
$$

\end{proof}

\begin{example}   \rm \
\label{first-E6-example} 
When $\Phi$ is of  type $\ER_6$, there are two minuscule dominant weights $\lambda_i$, 
whose associated nodes in the Dynkin diagram are 
darkened here:
 $$
\xymatrix@C=4pt@R=6pt{
& & \circ_{\tiny 2} \ar@{-}[d]& & \\
 \bullet_{\tiny 1} \ar@{-}[r]  &  \circ_{\tiny 3} \ar@{-}[r] &  \circ_{\tiny 4} \ar@{-}[r] &  \circ_{\tiny 5} \ar@{-}[r] &  \bullet_{\tiny 6}    
}. 
$$
This labeling of the Dynkin diagram corresponds to the following Cartan matrix:
\[C = \begin{pmatrix*}[r]
  2 & 0 & -1 & 0 & 0 & 0 \\
  0 & 2 & 0 & -1 & 0 & 0 \\
  -1 & 0 & 2 & -1 & 0 & 0 \\
  0 & -1 & -1 & 2 & -1 & 0 \\
  0 & 0 & 0 & -1 & 2 & -1\\
  0 & 0 & 0 & 0 & -1 & 2 
\end{pmatrix*}
\]   
Let  $\lambda$  be the left minuscule dominant weight, represented by the vector $[1,0,0,0,0,0]^t$.
We
exhibit the $C$-toppling sequence $v^{(0)}, v^{(1)}, \ldots, v^{(m)}$ 
showing $\varrho-\lambda=\stab_C((\varrho-\lambda)+\alpha^*)$, where 
$$
\alpha^* = \tilde \alpha  =
\alpha_1 +2\alpha_2 + 2 \alpha_3 + 3\alpha_4 + 2 \alpha_5 + \alpha_6 = \lambda_2.$$   
We display the coordinates relative to the basis of fundamental weights $\{\lambda_i\}_{i=1}^6$ using the 
Dynkin diagram. The number of steps (including addition of $\alpha^*= \tilde \alpha$  is $12$,  which is the Coxeter
number for $\ER_6$.  
$$
\begin{array}{rccccc}

\varrho-\lambda=&
\left[
\begin{matrix}
  & &1& & \\
0 &1&1&1& 1    
\end{matrix}
\right]

&\overset{+\alpha^*}{\longrightarrow}&

\left[
\begin{matrix}
 & &2& & \\
0&1&1&1&1    
\end{matrix}
\right]

&\longrightarrow&

\left[
\begin{matrix}
& &0 & & \\
 0   &  1  &  2  &  1  &  1    
\end{matrix}
\right]

\\
& & &  & & \\
&\quad\uparrow\quad & &  & &\quad\downarrow\quad \\
& & &  & & \\


&
\left[
\begin{matrix}
& &1 & & \\
 2   &  0  &  1  &  1  &  1    
\end{matrix}
\right]

& &  & & 

\left[
\begin{matrix}
& &1 & & \\
 0   &  2  &  0  &  2  &  1    
\end{matrix}
\right]

\\
& & &  & & \\
&\quad\uparrow\quad & &  & &\quad\downarrow\quad \\
& & &  & & \\


&\left[
\begin{matrix}
& &1 & & \\
 1   &  2  &  0  &  1  &  1    
\end{matrix}
\right]

 & &  & & 

\left[
\begin{matrix}
& &1 & & \\
 1   &  0  &  1  &  2  &  1    
\end{matrix}
\right]

\\

& & &  & & \\
&\quad\uparrow\quad & &  & &\quad\downarrow\quad \\
& & &  & & \\


&\left[
\begin{matrix}
& &0 & & \\
 1   &  1  &  2  &  0  &  1    
\end{matrix}
\right]

 & &  & & 

\left[
\begin{matrix}
& &1 & & \\
 1   &  0  &  2  &  0  &  2    
\end{matrix}
\right]

\\
& & &  & & \\
& \quad\uparrow\quad & &  & &\quad\downarrow\quad \\
& & &  & & \\


&\left[
\begin{matrix}
& &2 & & \\
 1   &  1  &  1  &  0  &  1    
\end{matrix}
\right]

&\leftarrow&

\left[
\begin{matrix}
& &2 & & \\
 1   &  1  &  0  &  2  &  0    
\end{matrix}
\right]

&\leftarrow& 

\left[
\begin{matrix}
& &1 & & \\
 1   &  0  &  2  &  1  &  0    
\end{matrix}
\right]
\end{array}
$$

\end{example} 

\medskip

\begin{remark} \rm \ 
\label{who-is-the-identity-remark}
Since the elements of $\coker(C^t)=P(\Phi)/Q(\Phi)$ are
represented by the set of all recurrent configurations 
$\{\varrho, \  \varrho-\lambda_i: i \text{ a minuscule node}\}$,
one might guess that the zero coset \break  $\zero+\im(C^t)$
is represented by $\varrho$.  This is {\it not} always true, e.g. in type 
$\AR_\ell$ for $\ell$ odd, the zero coset is represented 
by $\varrho-\lambda_{\frac{1}{2}(\ell+1)}$.  The question of which
recurrent configuration represents the zero coset is
equivalent to determining which element among $\zero$ and
the minuscule dominant weights $ \lambda_i$ is equivalent 
to $\varrho$ in $P(\Phi)$ modulo $Q(\Phi)$.
\end{remark}

\begin{remark} \rm \
\label{max-chains-in-root-order-remark}
The proof of  \eqref{rho-is-recurrent} shows  that stabilization sequences
from $\varrho+\tilde{\alpha}$ to $\varrho$ have an obvious bijection with
maximal chains
$$
\beta_1 < \beta_2 < \cdots < \beta_{\cox-1}=\tilde{\alpha}
$$
in the root ordering on $\Phi_+$.
\end{remark}

\begin{remark} \rm \
\label{numbers-game-remark}   The proof of \eqref{rho-minus-minuscule-is-recurrent}
exhibits  stabilization sequences from $(\varrho-\lambda)+\alpha^*$ to $\varrho-\lambda$, in which 
a sequence of
vectors in \eqref{numbers-game-winning-sequence}, 
$$
s_{\alpha^*}(\lambda)=u^{(0)},u^{(1)},\ldots,u^{(m-1)},u^{(m)}=\lambda,
$$
$(m = \height(\alpha^*))$  is subtracted.   We explain here how this sequence
\eqref{numbers-game-winning-sequence}
 may be viewed as a {\it winning sequence} in Mozes's {\it numbers game} 
using the Cartan matrix $C$ for $\Phi$, starting from the play position 
$u^{(0)}$.
  In this game, 
if $u=[u_1,\ldots,u_\ell]^t=\sum_{i=1}^\ell u_i \lambda_i$ has any coordinate
$u_i =(u,\alpha_i^\vee)< 0$, then one is permitted to do a {\it numbers firing} at node $i$ that replaces 
$
u \longmapsto u'=s_{\alpha_i}(u),
$
(or in coordinates, $u'_j=u_j-c_{ij} u_i$).
(See for example,  Bj\"orner and Brenti \cite[\S 4.3]{BjornerBrenti} and
Eriksson \cite{Eriksson}.)
One {\it wins} the game 
when one reaches $u$ having 
all nonnegative coordinates, that is, when $u$ lies in the fundamental chamber $F$.  
The survey by Eriksson \cite{Eriksson} discusses both
the chip-firing and numbers games when played with Cartan matrices for finite root systems 
and points out that generally the two games are 
unrelated \cite[p. 118, \P 2]{Eriksson}.

However, something very special
happens in the sequence \eqref{numbers-game-winning-sequence}.
 Because  $\lambda$ is minuscule, the entries in each $u^{(k)}$
lie in $\{-1,0,1\}$, and one is always firing $u^{(k)}$
at a node $i$ having $u^{(k)}_i=-1$.
Since $C$ has all its diagonal entries $c_{ii}=2$, one can check that
in this setting the involution swapping
$u \leftrightarrow v$ whenever $u+v=\one=\varrho$ sends a 
numbers game configuration $u$ in $\{-1,0,1\}^\ell$ to 
a chip configuration $v$ in $ \{0,1,2\}^\ell$
in such a way that the following diagram commutes:
$$
\begin{array}{rcl}
u & \longleftrightarrow&v\\
\downarrow& &\downarrow \\
u+\alpha_i &\longleftrightarrow &v-\alpha_i\\
\end{array}
$$
Here the left (resp. right) vertical arrow is a valid numbers firing (resp. valid toppling) at node $i$.

One obtains a further interpretation by slightly altering the
sequence \eqref{numbers-game-winning-sequence} in two ways:
\begin{itemize}
\item 
Augment the sequence with $u^{(-1)}:=\lambda$, giving the longer sequence
$$
\begin{array}{cccc}
u^{(-1)}, &u^{(0)}&,u^{(1)},\ldots,u^{(m-1)},&u^{(m)} \\
\Vert &\Vert&  &\Vert \\
\lambda &s_{\alpha^*}(\lambda)&  &\lambda.
\end{array}
$$

\item
Artificially pad each element $u=u^{(k)}=[u_1,\ldots,u_\ell]^t$ in $\ZZ^\ell$
to a vector $\tilde{u} = \tilde{u}^{(k)}:=[u_0,u_1,\ldots,u_\ell]^t$ in $\ZZ^{\ell+1}$
having $u_0:=-\sum_{i=1}^\ell \phi_i u_i$ where $\phi = [1,\phi_1,\dots,\phi_\ell]^t$ gives
the coefficients expressing $\alpha^*(\Phi) = \tilde{\alpha}(\Phi)^\vee=\sum_{i=1}^\ell \phi_i \alpha_i^\vee$ as in \eqref{eq:nullspace}.
This  is {\it forcing} each $\tilde{u}^{(k)}$ 
to lie in $\phi^\perp$.
\end{itemize}
The result 
$\tilde{u}^{(-1)},  \tilde{u}^{(0)}, \cdots, \tilde{u}^{(m)}$
turns out to be a {\it looping} sequence for the
{\it numbers game} played using the {\it extended Cartan matrix} $\tilde{C}(\Phi^\vee)$
for the {\it dual} (!) root system $\Phi^\vee$. 
Looping sequences were characterized by Eriksson \cite[\S3.2]{Eriksson},
and studied further by Gashi and Schedler \cite{GashiSchedler}, 
and by Gashi, Schedler and Speyer \cite{GashiSchedlerSpeyer}.
\end{remark}

\begin{example} \rm \
\label{second-E6-example}
The root system of type  $\ER_6$ is simply laced, so that $\tilde{\alpha}(\Phi)=\tilde{\alpha}(\Phi^\vee)=\alpha^*(\Phi)=\alpha^*(\Phi^\vee)$.
Therefore,  one has $\delta=\delta^\vee=\phi=[1,\phi_1,\ldots,\phi_6]^t$,
with $\phi$ shown here labeling the extended Dynkin diagram for $\tilde{\ER}_6$:
$$
\xymatrix@C=2pt@R=3pt{
& &1 \ar@{-}[d]& & \\
& &2 \ar@{-}[d]& & \\
 1 \ar@{-}[r]  &  2 \ar@{-}[r] &  3 \ar@{-}[r] &  2 \ar@{-}[r] &  1    
}.
$$
The looping sequence $( \tilde{u}^{(k)} )_{k=-1}^{m}$ inside $\phi^\perp$
for the numbers game played with the extended Cartan matrix  $\tilde{C}(\ER_6^\vee)=\tilde{C}(\ER_6)$
for $\ER_6$, is displayed here, to be compared with Example~\ref{first-E6-example}:

$$
\begin{array}{rccccc}

{\tilde u}^{(-1)}=&
\left[
\begin{matrix}
  & &-1& & \\
  & &\ \, 0& & \\
1 &0&\ \,0&0& 0    
\end{matrix}
\right]

&\longrightarrow&

\left[
\begin{matrix}
 & &\ \, 1& & \\
 & &-1& & \\
1&0&\ \, 0&0&0    
\end{matrix}
\right]

&\longrightarrow&

\left[
\begin{matrix}
  & &\ \,0& & \\
  & &\ \,1 & & \\
 1&0&-1&0&  0    
\end{matrix}
\right]

\\
& & &  & & \\
&\quad\uparrow\quad & &  & &\quad\downarrow\quad \\
& & &  & & \\


&
\left[
\begin{matrix}
 & &-1& & \\
 & &\ \, 0 & & \\
 -1   & 1  & \, \ 0  &  0  &  0    
\end{matrix}
\right]

& &  & & 

\left[
\begin{matrix}
 & &\ \,0& & \\
 & &\ \,0 & & \\
 1   &  -1  &\ \,1  &  -1  &  0    
\end{matrix}
\right]

\\
& & &  & & \\
&\quad\uparrow\quad & &  & &\quad\downarrow\quad \\
& & &  & & \\


&\left[
\begin{matrix}
 & &-1& & \\
 & &\ \, 0 & & \\
 0   &  -1  & \ \, 1  &  0  &  0    
\end{matrix}
\right]

 & &  & & 

\left[
\begin{matrix}
 & &\ 0& & \\
 & &\ 0 & & \\
 0   & 1  & \ 0  &  -1  &  0    
\end{matrix}
\right]

\\

& & &  & & \\
&\quad\uparrow\quad & &  & &\quad\downarrow\quad \\
& & &  & & \\


&\left[
\begin{matrix}
 & &-1& & \\
 & &\ \ 1 & & \\
 0   &  0  &  -1  & 1  &  0    
\end{matrix}
\right]

 & &  & & 

\left[
\begin{matrix}
 & &\  \, 0& & \\
 & &\ \, 0 & & \\
 0   &  1  &  -1  &  1  &  -1    
\end{matrix}
\right]

\\
& & &  & & \\
& \quad\uparrow\quad & &  & &\quad\downarrow\quad \\
& & &  & & \\


&\left[
\begin{matrix}
 & &\ 0& & \\
 & &-1 & & \\
 0   &  0  &  \ 0  & 1  &  0    
\end{matrix}
\right]

&\leftarrow&

\left[
\begin{matrix}
 & &\ 0& & \\
 & &-1 & & \\
 0   &  0  & \ 1  &  -1  & 1    
\end{matrix}
\right]

&\leftarrow& 

\left[
\begin{matrix}
 & &\ \, 0& & \\
 & &\ \, 0 & & \\
 0   & 1  &  -1  &  0  & 1    
\end{matrix}
\right]
\end{array}
$$
\end{example}

\begin{example} \rm \
\label{type-C4-example}
The root system $\Phi$ of type $\CR_\ell$ ($\ell \geq 2$) is not simply laced.
Choosing simple roots 
$$
\{\alpha_1=e_1-e_2,\, \alpha_2=e_2-e_3,\, \ldots, \,
     \alpha_{\ell-1}=e_{\ell-1}-e_\ell,\, \alpha_\ell=2e_\ell\}, 
$$
we have that the simple coroots are
$$
\{\alpha^\vee_1=e_1-e_2,\, \alpha^\vee_2=e_2-e_3,\, \ldots, \,
     \alpha_{\ell-1}^\vee=e_{\ell-1}-e_\ell,\, \alpha^\vee_\ell=e_\ell\},
$$
and the fundamental dominant weights for $\CR_\ell$ are
$$
\{\lambda_i=e_1 + \cdots + e_i \mid i=1,\dots, \ell\}.$$  
Moreover, 
$$
\begin{aligned}
\tilde{\alpha}(\Phi)&=2e_1,\\
\tilde{\alpha}(\Phi^\vee)
 &=e_1+e_2=\sum_{i=1}^\ell \delta_i^\vee \alpha^\vee_i
 = \alpha^\vee_1+
  2 \alpha^\vee_2+
  2 \alpha^\vee_3+
  \cdots +
  2 \alpha^\vee_\ell,\\
\alpha^*(\Phi)&=e_1+e_2
 =\sum_{i=1}^{\ell} \phi_i \alpha^\vee_i
 = \alpha^\vee_1+
 2 \alpha^\vee_2+
 2 \alpha^\vee_3+
  \cdots +
 2 \alpha^\vee_\ell
 =\lambda_2.
\end{aligned}
$$
Hence,  $\lambda_1$ is the only minuscule dominant weight, and its associated node is 
darkened in the Dynkin diagram for the root system $\Phi$ of type $\CR_\ell$:
$$
\xymatrix@C=8pt@R=5pt{
\bullet_{\tiny 1}  \ar@{-}[r] & \circ_{\tiny 2} \ar@{-}[r]  & \circ_{\tiny 3} \ar@{-}[r]& 
       \dots \ar@{-}[r] & \circ_{\tiny \ell-1} & \circ_{\tiny \ell.} \ar@{=>}[l]
}
$$
The extended Dynkin diagram for the dual root system 
$\Phi^\vee$ of type $\BR_\ell$ labelled by $\phi$ is given here:
$$
\xymatrix@C=8pt@R=5pt{
1\ar@{-}[r] & 2\ar@{-}[r]  & 2\ar@{-}[r]& \dots \ar@{-}[r] & 2\ar@{=>}[r]&2 \\
            & 1 \ar@{-}[u] &            &                  &             &
}
$$

Taking $\ell=4$, we display on the left the
$C$-topplings showing that 
$\varrho-\lambda_1=\stab_C((\varrho-\lambda_1)+\alpha^*)$,
and on the right, the corresponding looping
sequence $( \tilde{u}^{(k)} )_{k=-1}^{m}$ in $\phi^\perp$ for
the numbers game played with the extended Cartan matrix
$\tilde{C}(\Phi^\vee)=\tilde{C}(\BR_4)$ for the dual root system:
$$
\begin{array}{rcclc}
\varrho-\lambda_1=&
\left[
\begin{matrix}
0 & 1 & 1 & 1\\
\end{matrix}
\right]

& \qquad &

{\tilde u}^{(-1)}=&\left[
\begin{matrix}
1 &\ \ 0 & 0 &0\\
   &-1 &   &
\end{matrix}
\right] \\

 & \downarrow +\alpha^* & & &\downarrow \\

&
\left[
\begin{matrix}
0 & 2 & 1 & 1\\
\end{matrix}
\right]

& \qquad &

&\left[
\begin{matrix}
1 &-1 & 0 &0\\
   & \ \ 1 &   &
\end{matrix}
\right] \\

 & \downarrow & & &\downarrow \\

&
\left[
\begin{matrix}
1 & 0 & 2 & 1\\
\end{matrix}
\right]

& \qquad &

&\left[
\begin{matrix}
0 &\ \, 1 & -1 &0\\
   &\ \, 0 &   &
\end{matrix}
\right] \\

 & \downarrow & & &\downarrow \\

&
\left[
\begin{matrix}
1 & 1 & 0 & 2\\
\end{matrix}
\right]

& \qquad &

&\left[
\begin{matrix}
0 &\ \,  0 & 1 &-1\\
    &\ \,  0 &   &
\end{matrix}
\right] \\

 & \downarrow & & &\downarrow \\

&
\left[
\begin{matrix}
1 & 1 & 2 & 0\\
\end{matrix}
\right]

& \qquad &

&\left[
\begin{matrix}
0 &\ \, 0 & -1 &1\\
   &\ \, 0 &   &
\end{matrix}
\right] \\

 & \downarrow & & &\downarrow \\

&
\left[
\begin{matrix}
1 & 2 & 0 & 1\\
\end{matrix}
\right]

& \qquad &

&\left[
\begin{matrix}
0 & -1 & 1 & 0\\
   & \ \  0 &   &
\end{matrix}
\right] \\

 & \downarrow & & &\downarrow \\

&
\left[
\begin{matrix}
2 & 0 & 1 & 1\\
\end{matrix}
\right]

& \qquad &

&\left[
\begin{matrix}
-1 &\ \ 1 & 0 &0\\
   & -1 &   &
\end{matrix}
\right] \\

 & \downarrow & & &\downarrow \\

&
\left[
\begin{matrix}
0 & 1 & 1 & 1\\
\end{matrix}
\right]

& \qquad &

&\left[
\begin{matrix}
1 &\ \ 0 & 0 &0\\
   & -1 &   &
\end{matrix}
\right]. \\

\end{array}
$$

\end{example}

\begin{section}{McKay quivers}
\label{McKay-quiver-section} \end{section}

We now discuss another source of avalanche-finite matrices from McKay quivers 
and their associated matrices.
We first define the quivers and then review some of their basic properties,
most of which can be found, 
for example, in Steinberg \cite[\S1(2)]{Steinberg}. \medskip

\begin{definitions} \rm \ 
For a complex representation 
$
\gamma:G \rightarrow \GL_n(\CC)
$
of a finite group $G$, 
denote its {\it character} by $\chi_\gamma: G \rightarrow \CC$. 
The {\it McKay quiver} 
of $\gamma$ is the digraph $(Q_0,Q_1)$ whose
node set $Q_0=\{\chi_0,\chi_1,\ldots,\chi_\ell\}$  
is the set of inequivalent irreducible complex $G$-representations $\chi_i$, 
with the convention that $\chi_0=\one_G$ is the trivial
representation, and whose arrow set $Q_1$ has $m_{ij}$ arrows from $\chi_i$ to $\chi_j$ if
one has the irreducible expansions:
\begin{equation}
\label{McKay-equation}
\chi_\gamma\cdot \chi_i  \cong \sum_{j=0}^\ell m_{ij} \chi_j.
\end{equation}
Record the coefficients  as the  matrix 
$M=(m_{ij})$  in $\ZZ^{(\ell+1) \times (\ell+1)}$, 
and define the
\begin{itemize}
\item 
 {\it extended McKay-Cartan matrix} $\tilde{C}:=nI-M$, where $I$ is the identity matrix of size $\ell+1$, 
\item 
 {\it McKay-Cartan matrix} $C$ as the submatrix of $\tilde{C}$
obtained by removing the row and column corresponding to $\chi_0$.
\end{itemize}
\end{definitions}

\begin{proposition}
\label{McKay-review-prop}
Fix a representation 
$
\gamma: G \rightarrow \GL_n(\CC)
$
of a finite group $G$.
\begin{itemize}
\item[{\rm (a)}] A full set of 
orthogonal eigenvectors for $\tilde{C}$ is given by
the column vectors in the character table of $G$,
$$
\delta^{(g)}:=
\left[ \chi_0(g),\chi_1(g),\ldots,\chi_\ell(g) \right]^t,
$$
as $g$ ranges over a set of conjugacy class representatives.
These vectors satisfy the eigenvector equations
\begin{equation}\label{eq:eigeneqn}
\tilde{C} \delta^{(g)} = 
\left(n-\chi_\gamma(g)\right) \cdot \delta^{(g)}.
\end{equation}
\item[{\rm (b)}]
In particular, the nullspace of $\tilde{C}$ contains
\begin{equation}
\label{delta-definition}
\delta^{(e)}=
 \left[ \chi_0(e),\chi_1(e),\ldots,\chi_\ell(e) \right]^t,
\end{equation}
where $e$ is the identity element of $G$, and  a basis for this nullspace is
given by the column vectors
$\{ \delta^{(g)} \}$ indexed by $G$-conjugacy class representatives $g$
lying in the normal subgroup $\ke(\gamma)$ of $G$.  In particular, when $\gamma$ is
faithful (injective), the vector $\delta^{(e)}$ is a basis for the nullspace.
\item[{\rm (c)}]
Consequently, $\tilde{C}$ has rank at most $n-1$, with
equality if and only if $\gamma$ is faithful.
\end{itemize}
\end{proposition}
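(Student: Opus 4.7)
The plan is to prove (a) by directly translating the character-ring identity \eqref{McKay-equation} into a matrix-vector equation, then derive (b) by identifying which of the eigenvalues from (a) vanish, and finally obtain (c) as an immediate corollary of (b).

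For (a), I would evaluate the identity $\chi_\gamma \cdot \chi_i = \sum_{j=0}^\ell m_{ij}\chi_j$ at an arbitrary element $g \in G$, yielding the scalar equation $\chi_\gamma(g)\chi_i(g) = \sum_{j=0}^\ell m_{ij}\chi_j(g)$. Reading this as the $i$-th row of a matrix product, it says precisely that $M\delta^{(g)} = \chi_\gamma(g)\,\delta^{(g)}$, whence $\tilde{C}\delta^{(g)} = (n - \chi_\gamma(g))\,\delta^{(g)}$, which is \eqref{eq:eigeneqn}. Mutual orthogonality of the vectors $\delta^{(g)}$ as $g$ ranges over conjugacy class representatives is then the standard column orthogonality relation for the character table,
\[
\sum_{i=0}^\ell \chi_i(g)\,\overline{\chi_i(h)} = 0 \quad \text{whenever } [g] \neq [h],
\]
and since the number of irreducible characters equals the number of conjugacy classes (both $\ell+1$), the $\delta^{(g)}$ in fact form an orthogonal basis of $\CC^{\ell+1}$.

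For (b), I would locate the zero-eigenvectors: $\delta^{(g)}$ lies in $\ker\tilde{C}$ iff $\chi_\gamma(g) = n$. Since $G$ is finite, $\gamma(g)$ is similar to a unitary matrix whose eigenvalues $\zeta_1,\ldots,\zeta_n$ have modulus one, and the equality $\sum_k \zeta_k = n$ forces each $\zeta_k = 1$ (by the triangle inequality, with equality iff all $\zeta_k$ are the same positive real of modulus $1$, namely $1$). Hence $\gamma(g) = I$, i.e. $g \in \ker(\gamma)$. Because $\ker(\gamma) \trianglelefteq G$, the set of $G$-conjugacy classes contained in $\ker(\gamma)$ is well defined, and the corresponding $\delta^{(g)}$'s, being part of the orthogonal basis from (a), form a basis of $\ker\tilde{C}$. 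When $\gamma$ is faithful, only the identity class contributes, leaving $\delta^{(e)}$ as a one-dimensional basis.

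Part (c) is then immediate: the nullity of $\tilde{C}$ equals the number of $G$-conjugacy classes inside $\ker(\gamma)$, which is at least $1$ (the identity class), with equality iff $\gamma$ is faithful. Consequently $\mathrm{rank}(\tilde{C}) \leq \ell$, with equality precisely when $\gamma$ is faithful. The only step that is not purely formal translation is the implication $\chi_\gamma(g) = n \Rightarrow g \in \ker(\gamma)$ in (b), which requires unitarizability of representations of finite groups; the remainder of the argument is bookkeeping converting the character-ring identity defining $M$ into linear algebra on $\CC^{\ell+1}$.
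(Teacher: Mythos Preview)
Your proof is correct and follows essentially the same approach as the paper: evaluate the defining identity \eqref{McKay-equation} at $g$ to get $M\delta^{(g)}=\chi_\gamma(g)\delta^{(g)}$, invoke column orthogonality for the character table, and then use the eigenvalue argument (your triangle inequality, the paper's Cauchy--Schwarz) to show $\chi_\gamma(g)=n$ forces $\gamma(g)=I$. Your writeup is in fact slightly more careful than the paper's in noting explicitly that $\ker(\gamma)$ is normal (so is a union of conjugacy classes) and in correctly stating the rank bound as $\ell$ rather than the paper's apparent typo $n-1$.
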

\begin{proof}
For (a),  the eigenvector equation 
$M \delta^{(g)} = \chi_\gamma(g) \cdot \delta^{(g)}$, which is equivalent to \eqref{eq:eigeneqn},
follows from evaluating both sides of \eqref{McKay-equation} on $g$. 
The rest of (a) is a consequence of the orthogonality of the columns of
the character table.

For (b), use the well-known fact that $\chi_\gamma(g)=n$ if and only if $\gamma(g)=\text{id}_{\CC^n}$: \  if $\gamma(g)$ has eigenvalues
$\zeta_1,\ldots,\zeta_n$ and if $\chi_\gamma(g)=\sum_{i=1}^n \zeta_i=n$,  
then 
$
n=\vert \chi_\gamma(g) \vert 
= \left\vert \sum_{i=1}^n \zeta_i \right\vert
\leq \sum_{i=1}^n  \vert \zeta_i \vert =n,
$
with equality forcing all the $\zeta_i$ to be equal by the Cauchy-Schwartz inequality, 
and hence,  all equal to $1$.

Then (c) is immediate from (b).
\end{proof}

In defining $C$ from $\tilde{C}$,
the choice of the row/column indexed by $\chi_0$, compared to the row/column indexed by any other one-dimensional representation, affects $C$ only
up to similarity, as shown by
the first part of the next proposition.  Recall that  
one-dimensional $G$-representations are simply group homomorphisms 
$\varphi$ in $\widehat{G}=\Hom(G,\CC^\times)$, and for every  irreducible character $\chi_i$ of
$G$,  the complex conjugate $g \mapsto \overline{\chi_i}(g) := \overline{\chi_i(g)} = \chi_i(g^{-1})$ is also an irreducible character.

\begin{proposition}
\label{extended-Cartan-symmetries}
Fix a representation $\gamma: G \rightarrow \GL_n(\CC)$ as before.
\begin{itemize}
\item[{\rm (a)}]
For $\varphi$ in $\widehat{G}$, define a permutation $i \mapsto \varphi(i)$ on $\{0,1,2,\ldots,\ell\}$ so that 
$\chi_i \cdot \varphi = \chi_{\varphi(i)}$.  Then
$$
\tilde{C}_{ij} = \tilde{C}_{\varphi(i)\varphi(j)}.
$$
\item[{\rm (b)}]
Define an involution $i \mapsto i^*$ on $\{0,1,2,\ldots,\ell\}$ so
that $\overline{\chi_i} \cong \chi_{i^*}$.  Then 
$$
\tilde{C}_{ij}=\tilde{C}_{j^*i^*}.
$$
\item[{\rm (c)}]
The involution on $\ZZ^{\ell+1}$ sending $e_i$ to $e_{i^*}$ fixes the vector $\delta^{(e)}$ and induces an isomorphism $\ker(\tilde{C}) \cong \ker(\tilde{C}^t)$.  
In particular, when $\gamma$ is faithful, 
$\delta^{(e)}$ spans both $\ker(\tilde{C})$ and $\ker({\tilde{C}^t})$.
\end{itemize}
\end{proposition}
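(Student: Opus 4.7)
The plan is to verify the three parts in order, exploiting the standard inner product on class functions and the fact that $\tilde{C} = nI - M$ so the identity part is preserved by any relabeling and we only need to check $m_{ij} = m_{\varphi(i)\varphi(j)}$, respectively $m_{ij} = m_{j^*i^*}$, for the $M$-part.

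For part (a), I would translate the entries of $M$ into inner products of characters, $m_{ij} = \langle \chi_\gamma \chi_i,\chi_j\rangle$, and use that tensoring with a one-dimensional character $\varphi$ is an isometry of the space of class functions (because $|\varphi(g)|=1$, so $\varphi \overline{\varphi}=1$). Explicitly,
\[
m_{\varphi(i)\varphi(j)} = \langle \chi_\gamma \chi_{\varphi(i)},\chi_{\varphi(j)}\rangle = \langle \chi_\gamma \chi_i \varphi,\chi_j \varphi\rangle = \langle \chi_\gamma \chi_i,\chi_j\rangle = m_{ij}.
\]
For part (b), I would again rewrite both sides as inner products and use that complex conjugation of characters is built into the inner product. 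Writing $m_{j^*i^*} = \langle \chi_\gamma \overline{\chi_j},\overline{\chi_i}\rangle$ and expanding the definition of the inner product, the integrand becomes $\chi_\gamma(g)\,\overline{\chi_j(g)}\,\chi_i(g)$, which is the same as the integrand for $\langle \chi_\gamma \chi_i,\chi_j\rangle$. This gives $m_{ij} = m_{j^*i^*}$, and once more the diagonal term $n\delta_{ij} = n\delta_{j^*i^*}$ causes no trouble.

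For part (c), the key observation is that $\chi_i(e) = \dim\chi_i = \dim\overline{\chi_i} = \chi_{i^*}(e)$, so the involution $e_i \mapsto e_{i^*}$ fixes $\delta^{(e)}$ componentwise. Let $P$ be the permutation matrix of the involution, so $P^2 = I$. A direct index computation shows $(P\tilde{C}P)_{ij} = \tilde{C}_{i^*j^*}$, and by part (b) this equals $\tilde{C}_{ji} = (\tilde{C}^t)_{ij}$. Thus $P\tilde{C}P = \tilde{C}^t$, so left multiplication by $P$ is an isomorphism $\ker(\tilde{C}) \xrightarrow{\sim} \ker(\tilde{C}^t)$. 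When $\gamma$ is faithful, Proposition~\ref{McKay-review-prop}(b) says $\delta^{(e)}$ spans $\ker(\tilde{C})$; applying the involution and using that $\delta^{(e)}$ is fixed gives that $\delta^{(e)}$ also spans $\ker(\tilde{C}^t)$.

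I do not anticipate a genuine obstacle here: each part is a one-line manipulation once one sets up the right formalism. The one place one must be a little careful is keeping track of the permutation matrix $P$ in part (c) to make sure the indices match up and one obtains $\tilde{C}^t$ rather than $\tilde{C}$ after conjugation; writing out $(P\tilde{C}P)_{ij}$ explicitly and then invoking part (b) makes this unambiguous.
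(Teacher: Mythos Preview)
Your proposal is correct and follows essentially the same approach as the paper: both reduce (a) and (b) to identities for $m_{ij}$ via the inner-product/averaging formula for multiplicities, and (c) follows from (b) together with $\chi_i(e)=\chi_{i^*}(e)$. Your part (c) is slightly more explicit, spelling out the permutation-matrix identity $P\tilde{C}P=\tilde{C}^t$ that the paper only records later; this is a harmless expansion of the same argument.
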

\begin{proof}
For (a), note that multiplying \eqref{McKay-equation} by $\varphi$ gives
$$
\begin{aligned}
\left(\chi_\gamma \cdot \chi_i\right) \cdot \varphi
& = \sum_{j=0}^\ell m_{ij} \chi_j \cdot \varphi  = \sum_{j=0}^\ell m_{ij} \chi_{\varphi(j)} \\
\chi_\gamma \cdot \left(\chi_i \cdot \varphi\right) 
& = \sum_{j=0}^\ell m_{\varphi(i)j} \chi_{j},
\end{aligned}
$$
implying $m_{ij} = m_{\varphi(i)\varphi(j)}$ for all $i,j$. 

For (b), use
\begin{align}
\label{multiplicities-as-averages}
m_{ij}
&=\frac{1}{|G|} \sum_{g \in G} 
\chi_\gamma(g)\chi_i(g)\chi_j(g^{-1}) 
=\frac{1}{|G|} \sum_{g \in G}  \chi_\gamma(g)
        \overline{ \chi_i}(g^{-1})\overline{\chi_j}(g) \\
& =
        \frac{1}{|G|} \sum_{g \in G} \chi_\gamma(g)
        \chi_{j^*}(g) \chi_{i*}(g^{-1})
=m_{j^*i^*}. \qedhere
\end{align}

For (c), note that $\delta^{(e)}_i= \chi_i(e)=\overline{\chi_i}(e)=\delta^{(e)}_{i^*}$,
and the rest follows from (b).

\end{proof}

Proposition~\ref{extended-Cartan-symmetries}\,(b) has a convenient
rephrasing.  Let $P$ be the $\ell \times \ell$ permutation matrix
for the involution $i \leftrightarrow i^*$ having $\overline{\chi_i} \cong \chi_{i^*}$, 
restricted to the nontrivial irreducible $G$-characters  $\{\chi_1,\ldots,\chi_\ell\}$, so
$P = P^{-1}= P^t$.  Then 
\begin{equation}
\label{transpose-contragredient-symmetry-reformulated}
C^t=PCP=P^t C P.
\end{equation}
This, combined
with an observation of Steinberg \cite{Steinberg},  will prove very useful in showing that $C$ is an avalanche-finite matrix.
To state the ideas in \cite[(3)]{Steinberg}, recall that any square matrix $C$ in $\RR^{\ell \times \ell}$, whether symmetric or not, gives rise to a quadratic form $Q(x)=x^t C x$ on $\RR^\ell$.
The {\it radical} of $Q$ is $\rad(Q):=\{ x \in \RR^\ell: Q(x)=0\}$.
The form $Q$ is said to be  {\it nonnegative semidefinite} if $Q(x) \geq 0$ for all $x$ in $\RR^\ell$,
and {\it positive definite} if additionally $\rad(Q)=\{\zero\}$.

\begin{proposition}\cite[(3)]{Steinberg}
For a representation $\gamma: G \rightarrow \GL_n(\CC)$, consider the quadratic form
$\tilde{Q}(y):=y^t \tilde{C} y$ on $\RR^{\ell+1}$ defined by the extended McKay-Cartan matrix $\tilde{C}$.  
Then $\tilde{Q}$ is nonnegative semidefinite, with $\rad(\tilde{Q}) \supseteq \RR \delta^{(e)}$,
and equality holds  if and only if $\gamma$ is faithful.
\end{proposition}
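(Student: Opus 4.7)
The plan is to prove the proposition by converting the quadratic form $\tilde{Q}(y)=y^t \tilde{C} y$ into an expression involving the class function $f := \sum_{i=0}^\ell y_i \chi_i$, and then analyzing when that expression vanishes.

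First I would use the formula $m_{ij} = \frac{1}{|G|} \sum_{g \in G} \chi_\gamma(g) \chi_i(g) \overline{\chi_j(g)}$ (already used in \eqref{multiplicities-as-averages}) to rewrite the off-diagonal part of $\tilde{Q}$:
\[
\sum_{i,j} m_{ij} y_i y_j \;=\; \frac{1}{|G|} \sum_{g \in G} \chi_\gamma(g)\,\Bigl(\sum_i y_i \chi_i(g)\Bigr)\Bigl(\sum_j y_j \overline{\chi_j(g)}\Bigr) \;=\; \frac{1}{|G|} \sum_{g \in G} \chi_\gamma(g)\,|f(g)|^2.
\]
Combining this with the identity $n\sum_i y_i^2 = \frac{n}{|G|}\sum_g |f(g)|^2$ (which is Parseval applied to the orthonormal basis of irreducible characters), I obtain the key formula
\[
\tilde{Q}(y) \;=\; \frac{1}{|G|} \sum_{g \in G} \bigl(n - \chi_\gamma(g)\bigr)\,|f(g)|^2.
\]

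Next I would observe that $\tilde{Q}(y)$ is manifestly real, so pairing each $g$ with $g^{-1}$ (and using $|f(g^{-1})| = |f(g)|$) shows $\tilde{Q}(y) = \frac{1}{|G|} \sum_g \bigl(n - \mathrm{Re}\,\chi_\gamma(g)\bigr)|f(g)|^2$. The eigenvalues of $\gamma(g)$ lie on the unit circle, hence $|\chi_\gamma(g)| \leq n$ and in particular $\mathrm{Re}\,\chi_\gamma(g) \leq n$, so every term is nonnegative: $\tilde{Q}$ is nonnegative semidefinite. For the inclusion $\RR\delta^{(e)} \subseteq \rad(\tilde{Q})$, I would note that $\chi_\gamma(e) = n$ gives $\tilde{C}\delta^{(e)} = \mathbf{0}$ by Proposition~\ref{McKay-review-prop}(b), so $\delta^{(e)} \in \ker(\tilde{C}) \subseteq \rad(\tilde{Q})$.

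For the radical analysis, equality $\tilde{Q}(y)=0$ forces $f(g) = 0$ for every $g$ with $\mathrm{Re}\,\chi_\gamma(g) < n$. Using the fact recalled in the proof of Proposition~\ref{McKay-review-prop}(b) that $\chi_\gamma(g) = n$ if and only if $\gamma(g) = \mathrm{id}$, this says precisely that $f$ is supported on $\ker(\gamma)$. Therefore $\rad(\tilde{Q})$ is isomorphic, via the map $y \mapsto f$, to the space of real class functions of $G$ supported on $\ker(\gamma)$, whose dimension equals the number of conjugacy classes of $G$ contained in $\ker(\gamma)$.

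Finally, I would finish the ``if and only if'' claim. If $\gamma$ is faithful, then $\ker(\gamma) = \{e\}$, the space of class functions supported on $\{e\}$ is one-dimensional, and expanding the indicator $\mathbf{1}_{\{e\}}$ in characters gives coefficients $y_i = \chi_i(e)/|G|$, so $y$ is a scalar multiple of $\delta^{(e)}$, forcing $\rad(\tilde{Q}) = \RR\delta^{(e)}$. Conversely, if $\gamma$ is not faithful then there is a conjugacy class in $\ker(\gamma)$ distinct from $\{e\}$, so $\rad(\tilde{Q})$ has dimension at least $2$ and strictly contains $\RR\delta^{(e)}$. The only real subtlety, and hence the main thing to watch in the write-up, is step three: handling the fact that $\chi_\gamma(g)$ may be complex by using the reality of $\tilde{Q}(y)$ to replace it by $\mathrm{Re}\,\chi_\gamma(g)$ before applying the bound $|\chi_\gamma(g)| \leq n$.
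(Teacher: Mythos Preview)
The paper does not give its own proof of this proposition; it simply cites Steinberg. Your argument is correct and is essentially Steinberg's: rewrite $\tilde{Q}(y)$ via orthogonality of characters as $\frac{1}{|G|}\sum_{g}(n-\mathrm{Re}\,\chi_\gamma(g))\,|f(g)|^2$ with $f=\sum_i y_i\chi_i$, then use $\mathrm{Re}\,\chi_\gamma(g)\le n$ with equality exactly on $\ker(\gamma)$.

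One imprecision to fix in the write-up: the image of $\rad(\tilde{Q})$ under $y\mapsto f$ is \emph{not} the space of real-valued class functions supported on $\ker(\gamma)$, since an $\RR$-linear combination $\sum_i y_i\chi_i$ need not take real values. What you actually get is the space of $\RR$-linear combinations of irreducible characters that vanish off $\ker(\gamma)$. To conclude that this has real dimension equal to the number $k$ of $G$-conjugacy classes contained in $\ker(\gamma)$, you need one more sentence: the complex space $W$ of class functions supported on $\ker(\gamma)$ has $\dim_\CC W=k$ and is stable under the involution $\sum y_i\chi_i\mapsto\sum \bar y_i\chi_i$ (because $\ker(\gamma)$ is closed under inversion and $\chi_i(g^{-1})=\overline{\chi_i(g)}$), so $W=(W\cap\RR^{\ell+1})\otimes_\RR\CC$ and $\dim_\RR(W\cap\RR^{\ell+1})=k$. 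With that added, the faithful\,/\,non-faithful dichotomy follows exactly as you wrote.
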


\begin{corollary}
\label{positive-definite-corollary}
For any representation $\gamma: G \rightarrow \GL_n(\CC)$,  the quadratic form
$Q(x):=x^t C x$ on $\RR^{\ell}$ defined by the McKay-Cartan matrix $C$
is also nonnegative semidefinite, and positive definite whenever $\gamma$ is faithful.
\end{corollary}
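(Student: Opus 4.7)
The plan is to derive this corollary directly from the preceding Steinberg proposition by realizing $Q$ as the restriction of $\widetilde{Q}$ to a coordinate hyperplane. Specifically, I would embed $\RR^\ell \hookrightarrow \RR^{\ell+1}$ via $x = [x_1,\ldots,x_\ell]^t \longmapsto y := [0,x_1,\ldots,x_\ell]^t$. Since $C$ is obtained from $\tilde{C}$ by deleting the row and column indexed by $\chi_0$, the padding by a zero in the $0^{th}$ coordinate kills exactly those entries of $\tilde{C}$ that are not present in $C$, and a short index manipulation gives
\begin{equation*}
\widetilde{Q}(y) \;=\; \sum_{i,j=0}^\ell y_i\,\tilde{C}_{ij}\,y_j \;=\; \sum_{i,j=1}^\ell x_i\,c_{ij}\,x_j \;=\; Q(x).
\end{equation*}

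The nonnegative-semidefinite half of the corollary is then immediate: by the preceding proposition, $\widetilde{Q}(y) \geq 0$ for every $y$ in $\RR^{\ell+1}$, and in particular for those with vanishing $0^{th}$ coordinate, so $Q(x) \geq 0$ for every $x$ in $\RR^\ell$.

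For the positive-definite half, I would argue by contradiction. Assume $\gamma$ is faithful, and suppose $Q(x) = 0$ for some $x$ in $\RR^\ell$. Then $\widetilde{Q}(y) = 0$ with $y = [0, x_1, \ldots, x_\ell]^t$, so $y$ lies in $\rad(\widetilde{Q})$. By the faithful case of Steinberg's proposition, $\rad(\widetilde{Q}) = \RR \delta^{(e)}$, so $y = \lambda \delta^{(e)}$ for some scalar $\lambda$. Comparing $0^{th}$ coordinates gives $0 = y_0 = \lambda \cdot \chi_0(e) = \lambda$, so $y = \zero$ and hence $x = \zero$. Thus $Q$ is positive definite.

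The only minor subtlety is justifying that $\widetilde{Q}(y) = 0$ forces $y \in \rad(\widetilde Q)$; this is automatic because a nonnegative semidefinite quadratic form $\widetilde Q(y) = y^t \widetilde C y$ depends only on the symmetric part $\tfrac12(\widetilde C + \widetilde C^t)$, and its zero set coincides with the kernel of that symmetric part (a standard Cauchy--Schwarz argument on the bilinear form). I do not expect any serious obstacle here; the whole proof is a two-line consequence of Steinberg's result together with the observation that $\delta^{(e)}$ has a nonzero $0^{th}$ entry.
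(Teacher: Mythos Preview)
Your proof is correct and follows essentially the same approach as the paper: embed $\RR^\ell$ into $\RR^{\ell+1}$ by zero-padding the $0^{th}$ coordinate, observe that $Q$ is the restriction of $\tilde{Q}$, and then note (in the faithful case) that the hyperplane $\{y_0=0\}$ meets $\rad(\tilde{Q})=\RR\delta^{(e)}$ only at $\zero$ since $\delta^{(e)}_0=\chi_0(e)=1$. Your closing remark about the zero set versus the radical is unnecessary here, since the paper \emph{defines} $\rad(\tilde{Q})$ to be the zero set $\{y:\tilde{Q}(y)=0\}$, so no Cauchy--Schwarz argument is needed.
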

\begin{proof}
Include $\RR^\ell \hookrightarrow \RR^{\ell+1}$
by sending $x=[x_1,\ldots,x_\ell]^t \mapsto y=[0,x_1,\ldots,x_\ell]^t$.
Then the quadratic form $Q$ is the restriction of $\tilde{Q}$ 
from $\RR^{\ell+1}$ to $\RR^\ell$.
Therefore $Q$ is nonnegative semidefinite  with
$$
\rad(Q)=\RR^\ell \cap \rad(\tilde{Q})=\RR^\ell \cap \RR \delta^{(e)} =\{\zero\}. \qedhere
$$
\end{proof}

This leads to our proof of 
Theorem~\ref{McKay-Cartan-matrices-are-toppling-theorem}, 
which we restate here.
\vskip.1in
\noindent
{\bf Theorem~\ref{McKay-Cartan-matrices-are-toppling-theorem}.} {} The McKay-Cartan matrix $C$ of a faithful representation $ \gamma: G \hookrightarrow \GL_n(\CC)$ of a finite
group $G$ is an avalanche-finite matrix. 

\vskip.05in
\noindent
\begin{proof}
By Proposition~\ref{toppling-equivalences-prop}(iii),  it suffices to check $C+C^t$ is positive definite.
As $Q(x):=x^tCx$ is positive definite by
Corollary~\ref{positive-definite-corollary}, and $C^t=P^t C P$ by
\eqref{transpose-contragredient-symmetry-reformulated},  then since $P$ is a permutation matrix,  
$$
Q'(x):=x^tC^tx= (P^tx)^tP^tCP(P^tx) =x^tP^tCPx=(Px)^tC(Px)
$$
is positive definite.
Thus $x^t(C+C^t)x=Q(x)+Q'(x)$ is positive definite;  that is,
$C+C^t$ is positive definite.
\end{proof}

\begin{definition} \rm \ 
\label{McKay-critical-group-definition}
Given a faithful representation $\gamma: G \hookrightarrow \GL_n(\CC)$, 
with McKay-Cartan and extended McKay-Cartan matrices $C, \tilde{C}$, 
define its {\it critical group} in any of the following four ways, which are
equivalent by  Proposition~\ref{Cartan-extended-Cartan-prop} 
and Proposition~\ref{extended-Cartan-symmetries}\,(c):
$$
\begin{aligned}
\KG&:=\coker(C^t) \left( =\ZZ^\ell/\im(C^t) =\KC\right), \\ 
&:=(\delta^{(e)})^\perp/\im(\tilde{C}^t), \\
&:=\ZZ^{\ell+1}/(\ZZ e_0+ \im(\tilde{C}^t), \  \text{ or } \\
\ZZ \oplus \KG&:=\coker(\tilde{C}^t).
\end{aligned}
$$
\end{definition}

\begin{example} \rm \
\label{first-A4-example}
Let $G=\AAA_4$ be the alternating subgroup of the symmetric group $\SSS_4$, and consider
its faithful irreducible representation $\gamma: \AAA_4 \rightarrow \SO_3(\RR)\subset \GL_3(\CC)$ as the rotational symmetries
of a regular  tetrahedron, where $\chi_\gamma$ is labeled $\chi_3$ in the 
character table below,  and  $\omega:=\exx^{2\pi i/3}$:

\begin{center}
\begin{tabular}{|c||c|c|c|c|}\hline
  $\AAA_4$     &$e$& $\begin{matrix} (123), (134),\\(142),(243),\end{matrix}$ 
                                & $\begin{matrix} (132),(143)\\(124),(234)\end{matrix}$
                                & $\begin{matrix} (12)(34), (13)(24), \\(14)(23)\end{matrix}$ \\ \hline\hline
$\chi_0$ & $1$ & $1$ & $1$ & $\ \, 1$\\ \hline
$\chi_1$ & $1$ & $\omega$ &$\omega^2$ & $\ \, 1$ \\ \hline
$\chi_2$ & $1$ & $\omega^2$ &$\omega$ & $\ \, 1$\\ \hline
$\chi_3$ & $3$ & $0$ & $0$ & $-1$ \\ \hline
\end{tabular}
\end{center} 
\noindent
The matrices $M, \tilde{C}, C$ associated with $\gamma$ are 
$$
M = 
\bordermatrix{
~     & \chi_0 & \chi_1 & \chi_2 & \chi_3 \cr
\chi_0 & 0 & 0 & 0 & 1 \cr
\chi_1 & 0 & 0 & 0 & 1 \cr
\chi_2 & 0 & 0 & 0 & 1 \cr
\chi_3 & 1 & 1& 1 & 2
}, \quad
\tilde{C} = 
\bordermatrix{
~     & \chi_0 & \chi_1 & \chi_2 & \chi_3 \cr
\chi_0 &\ \,  3 & \ \, 0 & \ \, 0 &-1 \cr
\chi_1 & \ \, 0 & \ \, 3 &\ \,  0 &-1 \cr
\chi_2 & \ \, 0 &\ \,  0 & \ \, 3 &-1 \cr
\chi_3 &-1&-1&-1 &\ \, 1
}, \quad
C = 
\bordermatrix{
~     & \chi_1 & \chi_2 & \chi_3 \cr
\chi_1 & \ \, 3 & \ \, 0 &-1 \cr
\chi_2 & \ \, 0 & \ \, 3 &-1 \cr
\chi_3 &-1&-1 & \ \, 1
}      
$$
Note that $\tilde{C}$ is unchanged by simultaneous cyclic permutations of 
the rows/columns indexed by $\chi_0, \chi_1, \chi_2$, as predicted by
Proposition~\ref{extended-Cartan-symmetries}(a),
since $\{\chi_0,\chi_1,\chi_2\}=\widehat{G} \cong \ZZ/3\ZZ$.

Since $\det(C)=3$, one concludes that $\gamma$ has 
critical group 
$$
\KG:=\coker(C^t: \ZZ^3 \rightarrow \ZZ^3)=\ZZ^3/\im(C^t) \cong \ZZ/3\ZZ.
$$
Toppling with $C$ gives these superstable configurations
$u$ and corresponding (via Theorem~\ref{superstable-recurrent-duality})
recurrent configurations $v^C-u$, where $v^C=[2,2,0]^t$:
\begin{center}
\begin{tabular}{|c||c|c|c|}\hline
\text{superstables}& $[0,0,0]^t$ & $[1,0,0]^t$ & $[0,1,0]^t$ \\ \hline
\text{recurrents} &    $[2,2,0]^t$ & $[1,2,0]^t$ & $[2,1,0]^t$ \\ \hline
\end{tabular} 
\end{center}

\end{example}

\begin{remark} \rm \
The equivalence of the various definitions of $\KG$ given in
Definition~\ref{McKay-critical-group-definition}
is sensitive to the choice of  the row/column 
removed  to  form $C$ from $\tilde{C}$.
For example, in the representation $\gamma: \AAA_4 \rightarrow \SO_3(\RR)$ considered in Example~\ref{first-A4-example},  deleting the row/column corresponding to $\chi_0$ gives  $\coker(\tilde{C}^t) \cong \ZZ \oplus \ZZ/3\ZZ$.   However,  the submatrix of $\tilde{C}$ obtained by
striking out the row/column indexed by $\chi_3$  is the $3 \times 3$ matrix $3I$, whose
cokernel is isomorphic to  $(\ZZ/3\ZZ)^3$.   Hence, toppling with respect to this matrix gives $27$ distinct recurrent configurations, which are all the integer configurations of the form $ [a_1,a_2,a_3]^t$ such that $0 \leq a_i  \leq 2$.
\end{remark}
 
\subsection{A burning configuration 
for McKay-Cartan matrices} \quad \

For a faithful representation $\gamma: G \hookrightarrow \GL_n(\CC)$,
the $0^{th}$ row vector
$
[m_{00},m_{01},m_{02},\ldots,m_{0\ell}]^t 
$
of $M$ gives the (nonnegative) coefficients in the irreducible expansion
$\chi_\gamma= \sum_{j=0}^{\ell} m_{0j} \chi_j$.
Next we observe that the projection  
$\pi: \ZZ^{\ell+1} \rightarrow \ZZ^\ell$ which forgets the $0^{th}$ coordinate
sends it to a vector
$$
b_0:=[m_{01},m_{02},\ldots,m_{0\ell}]^t 
$$
which is a burning configuration for $C$.

\begin{proposition}
In the above setting, $b_0$ is a burning configuration for $C$.
\end{proposition}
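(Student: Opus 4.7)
The plan is to check the two conditions of Definition~\ref{burning-configuration-definition} separately: (i)~$b_0 \in \im(C^t)$, and (ii)~every vertex of $D(C)$ is reachable from some vertex in $\supp(b_0)$.

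For (i), I would exhibit an explicit preimage of $b_0$ under $C^t$. The natural candidate, motivated by Proposition~\ref{extended-Cartan-symmetries}(c), is $z := [\dim\chi_1,\ldots,\dim\chi_\ell]^t$. Since $\gamma$ is faithful, Proposition~\ref{McKay-review-prop}(b) combined with Proposition~\ref{extended-Cartan-symmetries}(c) gives that $\delta^{(e)} = [1,\dim\chi_1,\ldots,\dim\chi_\ell]^t$ spans both $\ker(\tilde{C})$ and $\ker(\tilde{C}^t)$, so in particular $\tilde{C}^t \delta^{(e)} = \zero$. Reading off the $j^{th}$ coordinate of this identity for $j=1,\ldots,\ell$, and using $\tilde{C}_{0j} = -m_{0j}$, yields
$$
-m_{0j} \;+\; \sum_{i=1}^\ell c_{ij}\, \dim\chi_i \;=\; 0,
$$
which is precisely $(C^t z)_j = m_{0j} = (b_0)_j$. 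Thus $C^t z = b_0$, and $b_0 \in \im(C^t)$. As a pleasant byproduct, Remark~\ref{number-of-firings-remark} identifies $\dim\chi_i$ as the number of topplings occurring at node $\chi_i$ during any stabilization of $v+b_0$ back to a recurrent configuration $v$.

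For (ii), the key input is the classical theorem of Burnside: since $\gamma$ is faithful, every irreducible complex character $\chi_j$ of $G$ appears as a constituent of some tensor power $\chi_\gamma^{\otimes k}$. Translated through the definition \eqref{McKay-equation} of $M$, this says that every node $\chi_j$ is reachable from $\chi_0$ by a directed path in the McKay quiver. For each $j \in \{1,\ldots,\ell\}$, I would choose a path $\chi_0 = \chi_{i_0} \to \chi_{i_1} \to \cdots \to \chi_{i_k} = \chi_j$ of \emph{minimal} length. Minimality forces two structural properties: no intermediate vertex $\chi_{i_m}$ with $0 < m < k$ can equal $\chi_0$ (else we could splice out a detour back to $\chi_0$), and no two consecutive vertices can coincide (else we could delete a loop). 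Consequently the truncated path $\chi_{i_1} \to \chi_{i_2} \to \cdots \to \chi_{i_k} = \chi_j$ lives entirely within $\{\chi_1,\ldots,\chi_\ell\}$ and each of its arcs, having $i_m \neq i_{m+1}$ and $m_{i_m,i_{m+1}} > 0$, is a valid arc of $D(C)$. Its starting vertex $\chi_{i_1}$ satisfies $m_{0,i_1} > 0$, so $i_1 \in \supp(b_0)$.

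Neither step is technically hard; the one piece of insight required is recognizing the dimension vector $[\dim\chi_1,\ldots,\dim\chi_\ell]^t$ as the preimage in (i), which is forced on us once we notice that $\delta^{(e)}$ is in the nullspace of $\tilde{C}^t$. For (ii), Burnside's theorem supplies exactly the reachability information required, and minimal-path surgery handles the technicality of avoiding the removed vertex $\chi_0$ and self-loops.
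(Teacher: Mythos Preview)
Your proof is correct and follows essentially the same approach as the paper: both verify (i) by using that $\delta^{(e)}$ lies in $\ker(\tilde{C}^t)$ to exhibit $z=[\dim\chi_1,\ldots,\dim\chi_\ell]^t$ as a preimage of $b_0$ under $C^t$, and both invoke Burnside's theorem for (ii) to produce paths in the McKay quiver from $\chi_0$ that restrict to paths in $D(C)$ starting in $\supp(b_0)$. Your minimal-path argument in (ii) is a bit more explicit than the paper's about avoiding $\chi_0$ and self-loops, but this is a matter of exposition rather than a different idea.
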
  

\begin{proof}   It is apparent that $b_0$ lies in $\NN^\ell$, because the multiplicities $m_{ij}$ of the irreducible summands  are nonnegative.  Now we check that $b_0$ satisfies conditions 
(i) and (ii)  in 
Definition~\ref{burning-configuration-definition} of a burning configuration.

For (i),  since $\delta^{(e)} \in \ker(\tilde{C})$ and $\delta^{(e)}_0=1$,
the matrix $\tilde{C}$ has its $0^{th}$ row $\tilde{C}_{0*}$ lying 
in the $\ZZ$-span of the other rows:  
$
\tilde{C}_{0*}=-\sum_{i=1}^\ell \delta^{(e)}_i \tilde{C}_{i*}.
$
Applying $\pi: \ZZ^{\ell+1} \rightarrow \ZZ^\ell$ gives
the second equality here
\begin{equation}
\label{McKay-burning-equation}
b_0
= - \pi(\tilde{C}_{0*})
= \sum_{i=1}^\ell \delta^{(e)}_i \pi(\tilde{C}_{i*})
= \sum_{i=1}^\ell \delta^{(e)}_i C_{i*}
\end{equation}
which shows that $b_0$ lies in $\im(C^t)$.

 For (ii),   Burnside's theorem says that for a faithful representation 
$\gamma$  of a finite group $G$ on $V = \CC^n$, 
 any  irreducible character $\chi_j$ of $G$ occurs in $V^{\otimes m}$ for some $m$  (in fact,
 for  $0 \leq m \leq |\gamma(G)|$ by Brauer's strengthening of that result  (\cite[Thm. 9.34]{Curtis-Reiner}) - see also
 \cite[Problem 4.12.10]{EtingofEtAl}).
This implies there is a directed path with $m$ steps from $\chi_0$ to $\chi_j$ in the McKay quiver  determined by
$\gamma$;   hence, it also gives such a directed path from some node in $\supp(b_0)$ to $\chi_j$ in the digraph $D(C)$
 with at most 
$m-1$ steps.  Consequently, $b_0$ is a burning configuration for $C$.   
\end{proof} 

\begin{remark}
Equation \eqref{McKay-burning-equation}
shows that when stabilizing $v+b_0$ to $v$ for any recurrent configuration $v$,
node $i$ will topple $\delta^{(e)}_i$ times for each $i=1,2,\ldots,\ell$, giving
a total of $\sum_{i=1}^\ell \delta^{(e)}_i$ topplings.
\end{remark}

\begin{example} \rm \
\label{second-A4-example}  For  $G = \AAA_4$ as in Example 
\ref{first-A4-example},  one has $\delta^{(e)} = [1,1,1,3]^t$.   Then 
$$
b_0=[0,0,1]^t = 
C^t [1,1,3]^t
$$   
is a burning configuration, and if $b_0$ 
is added to any of the recurrent configurations $v = [2,2,0]^t,\ [1,2,0]^t$ or $[2,1,0]^t$,  we 
obtain back $v$ after $5 = 1+1+3$ topplings.   
\end{example}

\subsection{Ring and rng structures from tensor product}
\label{rng-section}

As we explain next, besides their additive structure, the 
abelian groups $\KC$ coming from McKay-Cartan
matrices carry a {\it multiplicative} structure 
as a {\it rng} (=ring without unit).
This transpires by viewing $\KC$ as an {\it ideal} inside
the ring $\coker(\tilde{C})$ viewed as a certain quotient of the usual
{\it (virtual) representation ring} $R(G)$ for $G$.

For a finite group $G$ with irreducible complex characters
$\Irr(G)=\{\one_G=\chi_0,\chi_1,\ldots,\chi_\ell\}$,
recall that direct sum $\oplus$ and tensor product $\otimes$ of $G$-representations 
correspond to pointwise addition and pointwise multiplication of characters, 
considered as functions $G \rightarrow \CC$.

\begin{definitions} \rm \
For a finite group $G$,
the {\it ring of virtual characters} or  {\it representation ring} $R(G)$
is a commutative $\ZZ$-algebra whose additive structure
is a free $\ZZ$-module $\ZZ^{\ell+1}$ having the ordered $\ZZ$-basis
 $\{e_0,e_1,\ldots,e_\ell\}$
in bijection with $\Irr(G)$, and
whose multiplication extends $\ZZ$-linearly the rule
$e_i e_j =  \sum_{k=0}^\ell c_k e_k$
if
$
\chi_i \cdot \chi_j = \sum_{k=0}^\ell c_k \chi_k
$ 
as a pointwise equality of functions $G \rightarrow \CC$.
The unit element $1$ of $R(G)$ is $e_0$.
The {\it (virtual) degree} function is a $\ZZ$-algebra homomorphism
defined via
$$
\begin{array}{rcl}
\deg: R(G) &\longrightarrow &\ZZ\\
e_i &\longmapsto &\delta^{(e)}_i (=\chi_i(e)).
\end{array}
$$
One can think of this as taking the dot product  with $\delta^{(e)}$
for vectors in $\ZZ^{\ell+1}$.
\end{definitions} 

For a faithful representation $\gamma:G \hookrightarrow GL_n(\CC)$
of degree $n$, 
with irreducible decomposition 
$\chi_\gamma=\sum_{j=0}^\ell m_{0j} \chi_j$,
its corresponding element $e_\gamma:=\sum_{j=0}^\ell m_{0j} e_j$
in $R(G)$ has $\deg(e_\gamma)=n$. Hence the element $n-e_\gamma$ in $R(G)$
has degree $0$, and generates a principal subideal 
$\lang n-e_\gamma\rang$ inside the ideal $\ker(\deg)=(\delta^{(e)})^\perp$. 

\begin{definition} \rm \
In the above context,
define the quotient ring
$$
R(\gamma):=R(G)/ \lang n-e_\gamma\rang.
$$
Since $\lang n-e_\gamma\rang \subset \ker(\deg)$,   there is an induced
$\ZZ$-algebra homomorphism $\overline{\deg}:R(\gamma) \longrightarrow \ZZ$.
Define its kernel, the ideal within the quotient ring $R(\gamma)$,  
$$
I(\gamma):=\ker(\overline{\deg}: R(\gamma) \longrightarrow \ZZ).
$$
\end{definition}

\begin{proposition}
\label{ring-and-rng-prop}
For a faithful representation $\gamma: G \hookrightarrow GL_n(\CC)$,
there are additive isomorphisms
\begin{align}
\label{Pic-as-ring}
R(\gamma)& \cong  \coker(\tilde{C}) 
         \qquad \left( \cong \ZZ \oplus \KC \right)\\
\label{Pic-zero-as-rng}
I(\gamma) & \cong \coker (C)  
         \qquad \left( \cong \KC \right),
\end{align}
which endow
\begin{itemize}
\item $\ZZ \oplus \KC$ with the extra structure of
a $\ZZ$-algebra as $R(\gamma)$, and 
\item $\KC$ with the extra structure of a ring-without-unit (rng),
as the ideal $I(\gamma)$ in $R(\gamma)$.
\end{itemize}
\end{proposition}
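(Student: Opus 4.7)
The plan is to realize the ideal $\langle n - e_\gamma \rangle \subset R(G)$ explicitly as a sublattice of $\ZZ^{\ell+1} = R(G)$ and then apply Proposition~\ref{Cartan-extended-Cartan-prop} to match the resulting quotients with the various cokernels. The key preliminary computation is that for each basis element $e_i$, the McKay equation \eqref{McKay-equation} gives
$$
(n - e_\gamma) \cdot e_i \;=\; n e_i - \sum_{j=0}^{\ell} m_{ij} e_j \;=\; \sum_{j=0}^{\ell} \tilde{C}_{ij}\, e_j,
$$
so the image of the multiplication-by-$(n-e_\gamma)$ map on $R(G)$, written in the basis $\{e_0,\ldots,e_\ell\}$, is precisely the $\ZZ$-span of the rows of $\tilde{C}$, that is, $\im(\tilde{C}^t) \subseteq \ZZ^{\ell+1}$. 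Hence as additive groups,
$$
R(\gamma) \;=\; R(G)/\langle n-e_\gamma\rangle \;=\; \ZZ^{\ell+1}/\im(\tilde{C}^t) \;=\; \coker(\tilde{C}^t).
$$

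Next I would invoke Proposition~\ref{extended-Cartan-symmetries}(c): because $\gamma$ is faithful, $\delta^{(e)}$ spans both $\ker(\tilde{C})$ and $\ker(\tilde{C}^t)$, and its leading entry $\delta^{(e)}_0 = \chi_0(e) = 1$. Thus the hypotheses of Proposition~\ref{Cartan-extended-Cartan-prop} are satisfied with $\delta = \gamma = \delta^{(e)}$, yielding \eqref{perp-cokernel-versus-cokernel} and \eqref{cokernel-relation}. Combining these with the computation above,
$$
R(\gamma) \;\cong\; \coker(\tilde{C}^t) \;\cong\; \ZZ \oplus \coker(C^t) \;=\; \ZZ \oplus \KC,
$$
proving \eqref{Pic-as-ring}. (The isomorphism $\coker(\tilde{C}^t) \cong \coker(\tilde{C})$ then follows formally, matching the statement as written.)

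For the ideal $I(\gamma)$, the plan is to identify it with the middle term of a short exact sequence. The degree map $\deg: R(G) \to \ZZ$ corresponds under the basis identification to taking the inner product with $\delta^{(e)}$, so $\ker(\deg) = (\delta^{(e)})^\perp$. Since $n - e_\gamma$ itself has degree $0$, the principal ideal $\langle n - e_\gamma\rangle$ sits inside $(\delta^{(e)})^\perp$, and by definition
$$
I(\gamma) \;=\; \ker(\overline{\deg}) \;=\; (\delta^{(e)})^\perp / \im(\tilde{C}^t),
$$
which equals $\coker(C^t) = \KC$ by \eqref{perp-cokernel-versus-cokernel}. This gives \eqref{Pic-zero-as-rng}. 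The additional rng and $\ZZ$-algebra structures on $\KC$ and $\ZZ \oplus \KC$ are then inherited automatically as those of the ideal $I(\gamma)$ inside the quotient ring $R(\gamma)$, with no further verification needed.

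The only delicate point is bookkeeping: keeping straight whether the ideal cuts out $\im(\tilde{C})$ or $\im(\tilde{C}^t)$, and translating between $\coker(\tilde{C})$ and $\coker(\tilde{C}^t)$ in the singular case. This is handled once and for all by Proposition~\ref{extended-Cartan-symmetries}(c) (guaranteeing that $\delta^{(e)}$ kills $\tilde{C}$ on both sides) together with Proposition~\ref{Cartan-extended-Cartan-prop} (guaranteeing that the sublattice inclusion $\im(\tilde{C}^t) \subset (\delta^{(e)})^\perp \subset \ZZ^{\ell+1}$ splits off a canonical $\ZZ$-summand), so no additional case analysis should be required.
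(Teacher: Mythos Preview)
Your argument is correct and follows exactly the paper's approach: identify the principal ideal $\langle n-e_\gamma\rangle$ with $\im(\tilde{C}^t)$ via the computation $(n-e_\gamma)\cdot e_i=\sum_j \tilde{C}_{ij}e_j$, then invoke Proposition~\ref{Cartan-extended-Cartan-prop} (with $\delta=\gamma=\delta^{(e)}$, justified by Proposition~\ref{extended-Cartan-symmetries}(c)) to read off the two cokernels. You have simply written out in full what the paper compresses into two sentences.
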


\begin{proof}
The isomorphism \eqref{Pic-as-ring}
follows since the matrix $\tilde{C}^t$ expresses multiplication by 
$n-e_\gamma$ in the ring $R(G)$, when using the ordered $\ZZ$-basis 
$\{e_0,e_1\ldots,e_\ell\}$. 
Then \eqref{Pic-zero-as-rng} comes from restricting this 
multiplication by $n-\chi_\gamma$ to 
its action on the ideal $(\delta^{(e)})^\perp=\ker(\deg)$ within $R(G)$,
and using the equivalent definition $\KC=(\delta^{(e)})^\perp/\im(\tilde{C})$
from Definition~\ref{McKay-critical-group-definition}.
\end{proof}

Here are three examples of these ring and rng structures.

\begin{example} \rm \
\label{third-A4-example}
Continue Example~\ref{first-A4-example}
of the alternating group $G=\AAA_4$.
It has irreducible characters $\Irr(G)=\{\chi_0,\chi_1,\chi_2,\chi_3\}$,
and its character table shown there allows one to
check these relations:
$$
\left\{ 
\begin{matrix}
\chi_{k}&=\chi_{1}^k,\\
\chi_k \cdot \chi_3&=\chi_3,
\end{matrix}
\right. 
\quad \text{ for }k=0,1,2,
\qquad \text{ and } \qquad 
\begin{aligned}
\chi_1^3&=\one_G=\chi_0\\
\chi_3^2&=2\chi_3+\chi_0+\chi_1+\chi_2.
\end{aligned}
$$
Therefore letting $x:=e_1, y:=e_3$, with $\deg(x)=1, \deg(y)=3$,  one has
$$
R(G) \cong \ZZ[x,y]/ \lang x^3-1, \,\, xy-y, \,\, y^2-(2y+1+x+x^2)\rang.
$$

Now consider the faithful representation 
$\gamma: G=\AAA_4 \hookrightarrow \SO_3(\RR) \subset \GL_3(\CC)$
from Example~\ref{first-A4-example} representing $G=\AAA_4$ 
as the rotational symmetries of a
regular tetrahedron.  Since $\chi_\gamma=\chi_3$,  one has
$$
\begin{aligned}
R(\gamma) 
&:=R(G)/\lang 3-y\rang\\
& \cong \ZZ[x,y]/\lang x^3-1, \,\, xy-y, \,\, y^2-(2y+1+x+x^2), \,\, 3-y\rang\\
& \cong \ZZ[x]/\lang x^3-1, \,\, 3(x-1), \,\, x^2+x-2\rang\\
& \cong \ZZ[x]/\lang 3(x-1), \,\, (x-1)^2\rang\\
& \cong \ZZ[u]/ \lang 3u, \,\, u^2\rang,
\end{aligned}
$$
where the last isomorphism comes from a change of variable $u:=x-1$.
In fact, $u$ principally generates the ideal $I(\gamma)$ in $R(\gamma)$,
and additively one has
$$
R(\gamma) \cong \ZZ \oplus I(\gamma)
$$
in which
$$
I(\gamma) = ( \ZZ/3\ZZ ) u \cong \ZZ/3\ZZ,
$$
with trivial rng (multiplicative) structure on $I(\gamma)$ since $u^2=0$.
\end{example}

\begin{example} \rm \
\label{cyclic-McKay-example}
The cyclic group $G=\langle g:  g^m =e\rangle \cong \ZZ/m\ZZ $ 
has  $m$ irreducible characters 
$\Irr(G)=\{\chi_0,\chi_1,\chi_2, \cdots,  \chi_{m-1}\}$,
all one-dimensional of the form 
$\chi_k(g^j)=\omega_m^{jk}$ with $\omega_m:=e^{2\pi i/ m}$.
Hence $\chi_{k}=\chi_{1}^k$ for $k=0,1,2,\ldots,m-1$
with $\chi_1^{m}=\chi_0=\one_G$, and 
$$
R(G) \cong \ZZ[x]/\lang x^m-1\rang,
$$
where  $x:=e_1$  and   $\deg(x)=1$.
Choosing the faithful representation 
$\gamma: G\hookrightarrow \SL_2(\CC)$
that sends 
$
g \longmapsto 
\left[
\begin{smallmatrix} 
  \omega_m & 0 \\  0 & \omega^{-1}_m
\end{smallmatrix}
\right],
$
one has $\chi_\gamma=\chi_1+\chi_{m-1}=\chi_1+\chi_1^{m-1}$,
and hence $e_\gamma=x+x^{m-1}$.
Therefore,
$$
\begin{aligned}
R(\gamma) 
 &:=R(G)/\lang 2-(x+x^{m-1})\rang\\
 &\cong \ZZ[x]/\lang x^m-1,\,\, x^{m-1}+x-2\rang\\
 &\cong \ZZ[x]/\lang x^2-2x+1,\,\ x^{m-1}+x-2\rang\\
 & \cong \ZZ[x]/ \lang x^2-2x+1, \,\, m(x-1)\rang\\
 & \cong \ZZ[u]/ \lang u^2, \,\, mu\rang,
\end{aligned}
$$
where 
\begin{itemize}
\item the second isomorphism used the fact that in $\ZZ[x]$
dividing $x^m-1$ by $x^{m-1}+x-2$  
leaves a remainder of $-(x^2-2x+1) =  -(x-1)^2$, when $m \geq 3$,
\item the third isomorphism used the fact that in $\ZZ[x]$
dividing $x^{m-1}+x-2$ by  $x^2-2x+1$ leaves a remainder of $m(x-1)$,
\end{itemize}
and the final isomorphism comes from the change of variable $u:=x-1$.
Thus,  additively one has
$$
\begin{aligned}
R(\gamma) &\cong \ZZ \oplus I(\gamma) \\
I(\gamma) &\cong \left( \ZZ/m \ZZ \right) u \cong \ZZ/m\ZZ
\end{aligned}
$$
with trivial rng (multiplicative) structure on $I(\gamma)$ since $u^2=0$.
\end{example} 

\begin{example} \rm \
\label{cyclic-non-SL-example}
To illustrate some nontrivial multiplicative structure on $I(\gamma)$,
we consider a different family of faithful representations for the same cyclic
group $G$ of order $m$.  Fix $n \geq 1$, and let
$\gamma: G \hookrightarrow \GL_n(\CC)$ send $g$ to  the $n \times n$ scalar matrix $\omega_m  I$.
Therefore,  $\chi_\gamma=n \chi_1$, and $e_\gamma=nx$, so that
$$
\begin{aligned}
R(\gamma) 
&:=R(G)/\lang n-nx\rang \\
& \cong \ZZ[x]/ \lang  x^m -1, \,\, n-nx\rang\\
& \cong \ZZ[u]/ \lang (u+1)^m-1, \,\,nu\rang, 
\end{aligned}
$$
where the last isomorphism comes from our usual change of variable $u:=x-1$.
Again, $u$ principally generates the ideal $I(\gamma)$ in $R(\gamma)$,
and additively,  
$$
R(\gamma) \cong \ZZ \oplus I(\gamma)
$$
in which
$$
\begin{aligned}
I(\gamma) &= ( \ZZ/n\ZZ ) u \oplus
( \ZZ/n\ZZ ) u^2 \oplus \cdots \oplus
( \ZZ/n\ZZ ) u^{m-1} 
&\cong (\ZZ/n\ZZ)^{m-1}
\end{aligned}
$$
with rng (multiplicative) structure determined
by  $(u+1)^m-1=0$, that is,
$
u^m =-\sum_{k=1}^{m-1} \binom{m}{k} u^k.
$
\end{example} 

\begin{section}{Relation to the abelianization}
\label{abelianization-section}\end{section}

Our goal here is to prove Theorem~\ref{abelianization-theorem},
relating the critical group $\KG$ to the {\it abelianization}
$
G^{\ab}:=G/[G,G],
$
whenever $\gamma$ is a faithful representation that maps  $G$ into the 
{\it special linear group} $\SL_n(\CC)$ 
of complex $n\times n$-matrices of determinant 1.

Recall that every group homomorphism in $\widehat{G}$
factors as a composite 
$G \twoheadrightarrow G^\ab \rightarrow \CC^\times$,
that is, $\widehat{G} \cong \widehat{G^{\ab}}$.
Hence for $G$ finite, $\widehat{G}$ is Pontrjagin dual to
the abelianization $G^{\ab}$, 
and (non-canonically) isomorphic to it.  \medskip

\begin{definition}
For $\gamma: G \rightarrow \GL_n(\CC)$,
let $\det_\gamma:=\det \circ \gamma$ in $\widehat{G}$,
that is, $\det_\gamma(g):=\det(\gamma(g))$.
\end{definition}

The following is a more precise version of Theorem~\ref{abelianization-theorem}.
In stating it, we are considering $\ZZ^{\ell+1}$ and $\widehat{G}$    
as abelian groups under addition and pointwise multiplication, respectively.

\begin{theorem}
\label{McKay-surjection-theorem}

For a faithful representation $\gamma: G \    \hookrightarrow \  \SL_n(\CC)$ 
of a finite group $G$, the homomorphism 
$$
\begin{array}{rcl}
\ZZ^{\ell+1} & \overset{\pi}{\longrightarrow}& \widehat{G} \\
e_i  &\longmapsto & \det_{\chi_i}
\end{array}
$$
induces a surjective homomorphism of abelian groups 
$
\KG:=\ZZ^{\ell+1}/ \big( \ZZ e_0+ \im(\tilde{C}^t)\big )
 \twoheadrightarrow  \widehat{G}.
$
\end{theorem}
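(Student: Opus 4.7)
The plan is to verify two things: first, that the homomorphism $\pi\colon \ZZ^{\ell+1}\to\widehat{G}$ descends to a homomorphism on the quotient $\KG = \ZZ^{\ell+1}/(\ZZ e_0+\im(\tilde C^t))$; and second, that this induced map is surjective. Extending $\pi$ $\ZZ$-linearly, a vector $v=\sum_i v_i e_i$ maps to the character $\prod_i \det_{\chi_i}^{\,v_i}$ in $\widehat{G}$ (written multiplicatively).

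For well-definedness on $\ZZ e_0$, the character $\chi_0=\one_G$ is trivial, so $\det_{\chi_0}$ is the trivial character and $\pi(e_0)=1$. To handle $\im(\tilde C^t)$, note that the columns of $\tilde C^t$ are the rows of $\tilde C$, so I must show that for each $i=0,1,\ldots,\ell$,
\[
\pi(\tilde C^t e_i) \ = \ \prod_{j=0}^\ell \det_{\chi_j}^{\,c_{ij}} \ = \ 1,
\]
equivalently $\det_{\chi_i}^{\,n}=\prod_j \det_{\chi_j}^{\,m_{ij}}$. The key identity is the tensor-product determinant formula: for $G$-representations $V,W$,
\[
\det(g;V\otimes W) \ =\ \det(g;V)^{\dim W}\det(g;W)^{\dim V}.
\]
Applying this to the McKay relation $\chi_\gamma\cdot\chi_i=\sum_j m_{ij}\chi_j$ from \eqref{McKay-equation} gives
\[
\det_{\chi_\gamma}^{\,\dim\chi_i}\cdot \det_{\chi_i}^{\,n} \ =\ \prod_j \det_{\chi_j}^{\,m_{ij}}.
\]
Here the hypothesis $\gamma(G)\subseteq \SL_n(\CC)$ enters decisively: it forces $\det_\gamma=\det_{\chi_\gamma}=\one_G$, so the left side collapses to $\det_{\chi_i}^{\,n}$, yielding the desired identity. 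This shows $\pi$ vanishes on $\im(\tilde C^t)$ and therefore descends to $\bar\pi\colon \KG\to\widehat{G}$.

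For surjectivity, recall that every element of $\widehat{G}=\Hom(G,\CC^\times)$ is a one-dimensional complex character of $G$, and hence coincides with some $\chi_k\in\Irr(G)$ of degree one. For any such $\chi_k$ one has $\det_{\chi_k}=\chi_k$, so $\bar\pi(e_k)=\chi_k$, and every element of $\widehat{G}$ is hit. Thus $\bar\pi$ is surjective.

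The only step requiring actual work is the verification that $\pi$ annihilates $\im(\tilde C^t)$; the rest is bookkeeping. The mild subtlety there is remembering to apply the tensor--determinant formula with the correct exponents and to invoke the $\SL_n$ hypothesis; without the latter, one would instead obtain a factor of $\det_\gamma^{\,\dim\chi_i}$ and the identity would fail, which helps explain why Theorem~\ref{abelianization-theorem} is restricted to representations landing in $\SL_n(\CC)$ rather than $\GL_n(\CC)$.
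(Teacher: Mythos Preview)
Your proof is correct and follows essentially the same approach as the paper: both verify that $\pi(e_0)$ is trivial, that each row of $\tilde{C}$ lies in $\ker(\pi)$ via the tensor-product determinant identity combined with the $\SL_n$ hypothesis $\det_\gamma=\one_G$, and that surjectivity follows because every one-dimensional character of $G$ is some $\chi_k$ with $\det_{\chi_k}=\chi_k$. Your closing remark on why the $\SL_n$ hypothesis is essential is a nice addition.
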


\begin{proof}  
Surjectivity of $\pi$ follows because every homomorphism
$\varphi$ in $\widehat{G}$
is itself a one-dimensional irreducible character $\chi_i$  of $G$  for some $i$, and 
hence $\pi(e_i)=\varphi$.  
Since $\KG =\ZZ^{\ell+1}/ \big( \ZZ e_0+ \im(\tilde{C}^t)\big)$,
it suffices to show that
$e_0$ and $\im(\tilde{C}^t)$ lie in $\ke(\pi)$.
For $e_0$ this is clear, since $\chi_0=\one_G$ and
$\det_{\one_G}=\one_G$.

To show that $\im(\tilde{C}^t) \subset \ke(\pi)$, 
we compare the expressions for $\det_\gamma$ on the two sides of

\begin{equation}
\label{repeated-McKay-relation}
\chi_\gamma \cdot \chi_i = \sum_{j=0}^\ell m_{ij} \chi_j.
\end{equation}    
On the left side of \eqref{repeated-McKay-relation}, note that for any
 two linear operators $T_i: U_i \rightarrow U_i$  acting on finite-dimensional vector spaces $U_i$  for $i=1,2$, 
 we  have
\begin{equation}
\label{det-of-tensors}
\det{}_{U_1 \otimes U_2}(T_1 \otimes T_2) =
 \det{}_{U_1}(T_1)^{\dim(U_2)} \det{}_{U_2}(T_2)^{\dim(U_1)}.
\end{equation}
\noindent
Consequently, for any two genuine characters $\chi, \psi$ of $G$, one has
\begin{equation}
\label{det-of-tensors-consequence}
\det{}_{\chi \cdot \psi} 
   = \left(  \det{}_\chi \right)^{\psi(e)}
      \left(  \det{}_\psi \right)^{\chi(e)}.
\end{equation}

Since $\gamma: G \rightarrow \SL_n(\CC)$ 
means that $\det_\gamma(-)=\one_G$, \eqref{det-of-tensors}, this shows that 
$$
\det{}_{\chi_\gamma \cdot \chi_i} = \left(\det{}_{\chi_i}\right)^n 
                      \left(\det{}_\gamma\right)^{\delta^{(e)}_i} 
                    = \left(\det{}_{\chi_i}\right)^n.
$$
Comparing this with the right side of \eqref{repeated-McKay-relation},  we conclude that for each $i=0,1,2,\ldots,\ell$, one has
$$
\left(\det{}_{\chi_i}\right)^n = \prod_{j=0}^\ell \left(\det{}_{\chi_j} \right)^{m_{ij}}.
$$
This says that row $i$ of $\tilde{C}$ (= column $i$ of $\tilde{C}^t$) lies in $\ke(\pi)$ for each $i$, 
and  hence $\im(\tilde{C}^t) \subset \ke(\pi)$.
\end{proof}

\begin{example} \rm \
\label{fourth-A4-example}
Example~\ref{first-A4-example} 
considered a faithful representation  
$\gamma: G=\AAA_4 \hookrightarrow \SL_3(\CC)$
with
$$
\KG \cong \ZZ/3\ZZ \cong G^{\ab}.
$$
\end{example}

\begin{example} \rm \
\label{second-cyclic-McKay-example}
Example~\ref{cyclic-McKay-example},
considered a faithful representation 
$\gamma: G=\ZZ/m\ZZ \hookrightarrow \SL_2(\CC)$
with
$$
\KG \cong  \ZZ/m\ZZ \cong G(=G^{\ab}).
$$
\end{example}

\begin{example} \rm \
On the other hand, Example~\ref{cyclic-non-SL-example}
considered a different 
family of faithful representations 
$\gamma: G=\ZZ/m\ZZ \hookrightarrow \GL_n(\CC)$
that sent $g$  to  $\omega_m I \in \GL_n(\CC)$, with
$$
\KG \cong (\ZZ/n\ZZ)^m.
$$
Note that $\gamma(G) \subset \SL_n(\CC)$ 
if and only if $m$ divides $n$, which is
exactly the same condition under which $\KG\cong (\ZZ/n\ZZ)^m$ can
surject onto $\ZZ/m\ZZ =G (=G^\ab)$,
as Theorem~\ref{McKay-surjection-theorem} would predict.
\end{example}

Theorem~\ref{McKay-surjection-theorem} interacts in an interesting way with multiplication in $R(G)$.

\begin{proposition}
\label{products-annihilated}
The surjection $\pi$ from Theorem~\ref{McKay-surjection-theorem}, considered
as a homomorphism 
$$
(R(G),+) \longrightarrow \widehat{G}
$$
annihilates all products $xy$ with $x,y,$ in $I(G)$, that is, it factors through
the quotient $( R(G) /I(G)^2, + )$.
\end{proposition}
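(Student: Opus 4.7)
The plan is to derive a clean bilinear formula describing how $\pi$ behaves on products in $R(G)$, and then observe that it vanishes automatically on $I(G) \cdot I(G)$.

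First, I would recall the identity already used in the proof of Theorem~\ref{McKay-surjection-theorem}: for any two genuine characters $\chi,\psi$ of $G$,
\[
\det{}_{\chi\cdot\psi} \;=\; \bigl(\det{}_\chi\bigr)^{\psi(e)} \bigl(\det{}_\psi\bigr)^{\chi(e)}.
\]
Translating this into the language of $R(G)$ and $\pi$, for any two basis elements $e_i,e_j$ one obtains
\[
\pi(e_i e_j) \;=\; \pi(e_i)^{\deg(e_j)} \cdot \pi(e_j)^{\deg(e_i)}
\]
(here and below $\widehat{G}$ is written multiplicatively).

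The second step is to promote this identity from basis elements to arbitrary $x,y \in R(G)$. Writing $x=\sum_i a_i e_i$ and $y=\sum_j b_j e_j$, one has $xy = \sum_{i,j} a_i b_j\, e_i e_j$, and applying $\pi$ together with the basis-level formula gives
\[
\pi(xy) \;=\; \prod_{i,j} \Bigl(\pi(e_i)^{\deg(e_j)} \pi(e_j)^{\deg(e_i)}\Bigr)^{a_i b_j}.
\]
Separating the two factors and pulling the sums $\sum_j b_j \deg(e_j) = \deg(y)$ and $\sum_i a_i \deg(e_i) = \deg(x)$ out of the exponents, this collapses to the bilinear formula
\[
\pi(xy) \;=\; \pi(x)^{\deg(y)} \cdot \pi(y)^{\deg(x)}.
\]

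The third step is the punchline. If $x,y \in I(G) = \ker(\deg)$, then $\deg(x)=\deg(y)=0$, so the right-hand side is the trivial character $\one_G$, i.e.\ $\pi(xy)=0$ additively. Since $I(G)^2$ is additively generated by such products $xy$, the additive map $\pi$ vanishes on $I(G)^2$ and therefore factors through $(R(G)/I(G)^2,+)$.

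There is no real obstacle here; the only minor subtlety is the bookkeeping in step two, where one must check that the multiplicativity in each argument of the $\widehat{G}$-valued ``bilinear'' form $(\chi,\psi)\mapsto \det_{\chi\psi}$ really does interact with the $\ZZ$-bilinearity of multiplication in $R(G)$ so that the exponents $\deg(x)$ and $\deg(y)$ appear as claimed. Once the bilinear formula $\pi(xy) = \pi(x)^{\deg(y)}\pi(y)^{\deg(x)}$ is in hand, annihilation of $I(G)^2$ is immediate.
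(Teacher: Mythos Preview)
Your proof is correct and rests on the same key identity \eqref{det-of-tensors-consequence} as the paper's. The only organizational difference is that the paper picks a $\ZZ$-basis $\{e_i-\delta^{(e)}_i\}$ for $I(G)$ and checks directly that $\pi\bigl((e_i-\delta^{(e)}_i)(e_j-\delta^{(e)}_j)\bigr)=\one_G$, whereas you first extend the identity bilinearly to the closed formula $\pi(xy)=\pi(x)^{\deg(y)}\pi(y)^{\deg(x)}$ for all $x,y\in R(G)$ and then specialize to $\deg(x)=\deg(y)=0$; unwinding either computation yields the other.
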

\begin{proof}
Note that $I(G):=\ker(\deg)$ is the $\ZZ$-linear span of the elements
$\{e_i - \delta^{(e)}_i\}_{i=1,2,\ldots,\ell}$:
$$
x=\sum_{i=0}^\ell c_i e_i \text{ lies in } \ker(\deg)
\quad \Longleftrightarrow \quad
\sum_{i=0}^\ell c_i \delta^{(e)}_i=0
\quad \Longleftrightarrow \quad 
x=\sum_{i=0}^\ell c_i (e_i-\delta^{(e)}_i).
$$
Therefore it suffices to show that $\pi$ annihilates all products of the form
$(e_i - \delta^{(e)}_i )(e_j - \delta^{(e)}_j)$:
$$
\begin{aligned}
\pi\left( (e_i - \delta^{(e)}_i )(e_j - \delta^{(e)}_j) \right)
&= \pi\left( e_i e_j  
                - ( \delta^{(e)}_i e_j +  \delta^{(e)}_j e_i) 
                 + \delta^{(e)} \delta^{(e)}_j) \right) \\
&=\left( \det_{\chi_i \cdot \chi_j} \cdot \one_G^{\delta^{(e)}_i \delta^{(e)}_j} \right)
      / \left( (\det_{\chi_i})^{\delta^{(e)}_j}  (\det_{\chi_j})^{\delta^{(e)}_i} \right) 
 =\one_G,
\end{aligned}
$$
where the last equality used  identity \eqref{det-of-tensors-consequence}.
\end{proof}

\begin{corollary}
\label{isomorphism-implies-trivial-rng}
The surjection $\pi$ from Theorem~\ref{McKay-surjection-theorem}
induces a surjection of abelian groups
$$
\KG \cong (I(\gamma),+) \twoheadrightarrow \widehat {G}
$$
which annihilates all products in $I(\gamma)^2$.  In particular, whenever
there is an isomorphism $\KG \cong {\widehat G}$, all products in $I(\gamma)$ vanish,
that is, $I(\gamma)^2=0$.
\end{corollary}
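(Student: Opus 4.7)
The plan is to combine the three preparatory results already in hand: the additive identification $\KG \cong I(\gamma)$ from Proposition~\ref{ring-and-rng-prop}, the surjection $\KG \twoheadrightarrow \widehat{G}$ from Theorem~\ref{McKay-surjection-theorem}, and the vanishing on products from Proposition~\ref{products-annihilated}. The first step is to transport the surjection of Theorem~\ref{McKay-surjection-theorem} across the additive isomorphism $\KG \cong (I(\gamma),+)$, obtaining a surjection $\varpi: I(\gamma) \twoheadrightarrow \widehat{G}$. Under the identifications, $\varpi$ is computed simply as follows: given a class in $I(\gamma)$, lift it to an element $x \in I(G) \subset R(G)$ and apply the map $\pi: R(G) \to \widehat{G}$ sending $e_i \mapsto \det_{\chi_i}$. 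That $\varpi$ is well-defined on $I(\gamma) = I(G)/\langle n - e_\gamma\rangle \cap I(G)$ is exactly the content of the proof of Theorem~\ref{McKay-surjection-theorem}, which showed $\im(\tilde{C}^t) \subset \ker(\pi)$.

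Next I would verify that $\varpi$ kills $I(\gamma)^2$. Given $\bar{x},\bar{y} \in I(\gamma)$ with lifts $x,y \in I(G)$, the product $\bar{x}\bar{y}$ is represented by $xy \in I(G)^2$, so Proposition~\ref{products-annihilated} yields $\varpi(\bar{x}\bar{y}) = \pi(xy) = \one_G$. Since $I(\gamma)^2$ is additively spanned by such products, $\varpi$ annihilates the whole of $I(\gamma)^2$, establishing the first assertion.

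For the ``in particular'' clause, assume $\KG \cong \widehat{G}$ as abelian groups. Any surjection between finite abelian groups of the same cardinality must be an isomorphism; thus $\varpi$ is an isomorphism, its kernel is zero, and consequently $I(\gamma)^2 \subseteq \ker(\varpi) = 0$, as claimed.

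The argument is essentially a packaging of prior results, so I do not anticipate a hard step; the only point requiring mild care is making sure the identification $\KG \cong I(\gamma)$ from Proposition~\ref{ring-and-rng-prop} matches the surjection $\pi$ in Theorem~\ref{McKay-surjection-theorem} in a way compatible with the ring structure on $R(G)$, so that lifts of products in $I(\gamma)$ genuinely land in $I(G)^2 \subset \ker(\pi)$. This is immediate from the construction of $R(\gamma)$ as a quotient ring of $R(G)$, but should be stated explicitly.
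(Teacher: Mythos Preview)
Your proof is correct and follows essentially the same approach as the paper: restrict/descend $\pi$ to $I(\gamma)$, invoke Proposition~\ref{products-annihilated} to kill products, then use the equal-cardinality argument for the ``in particular'' clause. The one place where the paper is slightly more direct is surjectivity of the restricted map: rather than transporting the surjection across the isomorphism $\KG\cong I(\gamma)$ and then separately describing $\varpi$ as ``lift to $I(G)$ and apply $\pi$,'' the paper simply observes that $\pi(e_i-\delta^{(e)}_i)=\det_{\chi_i}=\pi(e_i)$, which immediately shows $\pi|_{I(G)}$ already hits all of $\widehat{G}$ and makes your two descriptions of $\varpi$ visibly coincide.
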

\begin{proof}
Since $\pi(e_i-\delta^{(e)}_i)=\det_{\chi_i}=\pi(e_i)$, the map
$\pi$ is surjective when restricted from $R(G)$ to $I(G)$.  Since
$\pi$ descends to the quotent $I(\gamma)$ of $I(G)$, the rest follows 
from Proposition~\ref{products-annihilated}.
\end{proof}

\begin{question} \rm \
\label{isomorphism-question}
For which faithful representations $\gamma: G \hookrightarrow \SL_n(\CC)$ of a finite group $G$
is the surjection in Theorem~\ref{McKay-surjection-theorem} an {\it isomorphism}
$\KG \cong \widehat{G}$?
\end{question}

\noindent
The answer for {\it abelian groups} will be given in Proposition~\ref{abelian-isomorphism-characterization} below. 

We mention here two general
reductions in Question~\ref{isomorphism-question}.

\begin{proposition}
\label{reductions-proposition}
The critical group $\KG$ of a 
representation $\gamma:G  \rightarrow \GL_n(\CC)$
and the ring and rng structures on $R(\gamma), I(\gamma)$
are unchanged by

\begin{itemize}
\item[{\rm (a)}]
adding copies of the trivial representation, that is,
replacing $\chi_\gamma \mapsto \chi_\gamma+ d \cdot \chi_0$, or
\item[{\rm (b)}]
precomposing with any group
automorphism $\sigma: G \rightarrow G$,
that is, replacing $\gamma \mapsto \gamma \circ \sigma$.
\end{itemize}
\end{proposition}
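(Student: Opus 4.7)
The plan splits the proof into the two separate assertions, each essentially a bookkeeping exercise once the right transformation law is identified.

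For (a), my approach is a direct computation. If $\chi_{\gamma'} := \chi_\gamma + d\,\chi_0$, then the degree shifts $n \mapsto n+d$ and, since $\chi_0 \cdot \chi_i = \chi_i$, the multiplicities shift as $m_{ij} \mapsto m_{ij} + d\,\delta_{ij}$. These two shifts cancel exactly in $\tilde{C}_{ij} = n\delta_{ij} - m_{ij}$, so $\tilde{C}$ is \emph{literally} unchanged, whence so are $C$ and $\KG$. At the ring level, the new generator of the ideal being quotiented out is $(n+d) - e_{\gamma'} = (n+d) - e_\gamma - d\,e_0 = n - e_\gamma$, using $e_0 = 1$ in $R(G)$; the same principal ideal is generated, so $R(\gamma)$ and $I(\gamma)$ are also unchanged.

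For (b), an automorphism $\sigma\colon G \to G$ acts on $R(G)$ by pullback, $(\sigma^*\chi)(g) := \chi(\sigma(g))$, giving a ring automorphism of $R(G)$ that permutes the $\ZZ$-basis $\{e_i\}$ via a permutation $\tau$ of $\{0,1,\ldots,\ell\}$ with $\tau(0) = 0$ (since the trivial character is preserved). The first step of the plan is to compute how the McKay multiplicities transform: from $\chi_{\gamma'}(g) = \chi_\gamma(\sigma(g))$ and the identity $\chi_i(g) = \chi_{\tau^{-1}(i)}(\sigma(g))$, a short manipulation gives
$$\chi_{\gamma'} \cdot \chi_i \;=\; \sigma^*\bigl(\chi_\gamma \cdot \chi_{\tau^{-1}(i)}\bigr),$$
so $m'_{ij} = m_{\tau^{-1}(i),\,\tau^{-1}(j)}$. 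In matrix language this reads $\tilde{C}' = P\tilde{C}P^{-1}$, where $P$ is the permutation matrix of $\tau$. Because $\tau$ fixes $0$, the similarity $P$ decomposes into a direct sum of a $1{\times}1$ block and a permutation $P_0$ of the remaining rows and columns, so it restricts to a similarity $C' = P_0 C P_0^{-1}$ on the McKay-Cartan submatrices, inducing the desired canonical abelian group isomorphism $\KG \cong \mathsf{K}(\gamma')$.

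For the ring and rng structures the same map $\sigma^*$ finishes the job: being a ring automorphism of $R(G)$ sending $e_\gamma$ to $e_{\gamma'}$, it carries the principal ideal $\langle n - e_\gamma\rangle$ onto $\langle n - e_{\gamma'}\rangle$, and therefore descends to a ring isomorphism $R(\gamma) \cong R(\gamma')$, which in turn restricts to a rng isomorphism $I(\gamma) \cong I(\gamma')$. I do not anticipate a serious obstacle in either part; the only mild delicacy is making the permutation bookkeeping in (b) consistent, in particular keeping track of whether $\tau$ or $\tau^{-1}$ appears on each side of the conjugation, and checking that the $0$-fixing property of $\tau$ really is what allows the passage from $\tilde C$ down to $C$.
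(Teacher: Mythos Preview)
Your proof is correct and follows essentially the same approach as the paper's own proof: for (a) both you and the paper observe that the shifts $n \mapsto n+d$ and $M \mapsto M+dI$ cancel in $\tilde{C}$ and in $n-e_\gamma$, while for (b) both argue that precomposition with $\sigma$ conjugates $\tilde{C}$ by a permutation matrix and induces a ring automorphism of $R(G)$ carrying one principal ideal to the other. Your version is slightly more explicit in (b), spelling out that $\tau(0)=0$ and that this is precisely what allows the conjugation to descend from $\tilde{C}$ to the submatrix $C$; the paper leaves this implicit.
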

\begin{proof}
For (a), note that replacing $\chi_\gamma$ with 
$\chi_{\gamma'}:=\chi_\gamma+ d \cdot \chi_0$,
replaces $M$ with $M'=M+dI$, and hence $\tilde{C}$ with 
$
\tilde{C}'=(n+d)I-(M+dI)=nI-M=\tilde{C}.
$
It also replaces $n-e_\gamma$ with $(n+d)-(e_\gamma+d)=n-e_\gamma$,
so that $R(\gamma), I(\gamma), \coker(\tilde{C}),$ and $\coker(C)$ are
unchanged.
For (b), note that precomposing $G$-representations with $ \sigma$
permutes $\{\chi_0,\chi_1,\ldots,\chi_\ell\}$,
replacing $\tilde{C}$ by $P\tilde{C} P^{-1}$
for some permutation matrix $P$, and inducing 
simultaneous ring automorphisms of $R(G)$ and $R(\gamma)$.
\end{proof}


\subsection{Example:  McKay correspondence for  
subgroups of $\SL_2(\CC)$}

For a finite subgroup $G$  of
$\SL_2(\CC)$,  the action of $G$ on $\CC^2$ (by matrix multiplication)  gives a faithful
representation $\gamma$.
McKay originally observed in \cite{McKay} 
(see also Steinberg \cite[\S1(4)]{Steinberg})
that the McKay quiver $(Q_0,Q_1)$ has 
\begin{compactenum}
\item[$\bullet$] {\it undirected} arcs, 
implying $m_{ij}=m_{ji}$,
\item[$\bullet$] no multiple edges,  meaning $m_{ij}=m_{ji} \in \{0,1\}$, 
\item[$\bullet$] no self-loops,  meaning $m_{ii}=0$,
\item[$\bullet$] $\tilde{C}=2 I -M$,  
the extended Cartan matrix of a simply-laced root system,
as in Section~\ref{extended-Cartan-matrix-section}.
\end{compactenum}

\noindent
In particular, 
$\KG=\ZZ^\ell/\im(C^t) = P(\Phi)/Q(\Phi)$ is the 
{\it fundamental group} of the finite root system $\Phi$,
the weight lattice modulo the root lattice,
discussed in Section~\ref{root-order-section}.

Note that here $\delta^{(e)}=\delta=\delta^\vee$,
where $\delta^{(e)}$ was defined in \eqref{delta-definition},
and $\delta, \delta^\vee$ were defined in 
Section~\ref{root-order-section}:
$\Phi$ being simply laced implies
$\delta^\vee=\delta$ is the basis for the left or right-nullspace of 
$\tilde{C}$, and since $\gamma$ is faithful, Proposition~\ref{McKay-review-prop}(c)
tells us that this nullspace is one-dimensional,
spanned by $\delta^{(e)}$ from \eqref{delta-definition}.  
Since $\delta_0=1=\delta_0^{(e)}$, it must be that  $\delta=\delta^{(e)}$.

We have the following more precise version of 
Theorem~\ref{McKay-abelianization-theorem}.

\begin{theorem}
\label{classical-McKay-abelianization-theorem} For $G$ a finite subgroup of $\SL_2(\CC)$
and  $\gamma: G \hookrightarrow \SL_2(\CC)$ (the natural representation), 
the surjection in Theorem~\ref{McKay-surjection-theorem}
is an isomorphism 
$
\KG \cong \widehat{G}.
$
Thus the critical group $\KG$ is Pontrjagin dual 
(so non-canonically isomorphic) to
the abelianization $G^{\ab}$.
\end{theorem}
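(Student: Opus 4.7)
The plan is to promote the surjection $\pi\colon \KG \twoheadrightarrow \widehat{G}$ already furnished by Theorem~\ref{McKay-surjection-theorem} to an isomorphism by a cardinality comparison, since both groups are finite. So the only thing that must be shown is $|\KG| = |\widehat{G}|$, and I will compute each side in terms of the simply-laced finite root system $\Phi$ associated to $G$ via McKay's observation.

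First I would recall that, since $G \hookrightarrow \SL_2(\CC)$, McKay's original observation (as recapped just above the theorem statement) identifies the extended McKay-Cartan matrix $\tilde{C}$ with the extended Cartan matrix of a simply-laced finite irreducible root system $\Phi$. Consequently Section~\ref{root-system-section} gives
$$ \KG \;\cong\; \coker(C^t) \;\cong\; P(\Phi)/Q(\Phi), $$
so $|\KG|$ equals the index of connection $f := |P(\Phi)/Q(\Phi)|$ of $\Phi$.

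Next I would count $|\widehat{G}|$ using the McKay data. The group $\widehat{G}$ consists of the one-dimensional irreducible characters of $G$, that is, the $\chi_i$ with $\chi_i(e)=1$; equivalently, those indices $i \in \{0,1,\ldots,\ell\}$ with $\delta^{(e)}_i = 1$. In the simply-laced case the discussion preceding the theorem records the coincidence $\delta^{(e)} = \delta = \delta^\vee$, so the count becomes
$$ |\widehat{G}| \;=\; \#\bigl\{ i \in \{0,1,\ldots,\ell\} : \delta^\vee_i = 1 \bigr\}. $$
By Proposition~\ref{minuscule-dominant-properties}(a), the indices $i \in \{1,\ldots,\ell\}$ with $\delta^\vee_i = 1$ are precisely those whose fundamental weight $\lambda_i$ is minuscule dominant; by Proposition~\ref{minuscule-dominant-properties}(b) there are exactly $f-1$ such indices. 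Adjoining the affine node $i=0$, where $\delta^\vee_0 = 1$ by definition, yields $|\widehat{G}| = f$.

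Combining the two counts gives $|\KG| = f = |\widehat{G}|$, forcing the surjection $\pi$ of Theorem~\ref{McKay-surjection-theorem} to be an isomorphism; the final sentence about Pontrjagin duality with $G^{\ab}$ is then automatic. Notably this argument sidesteps the case-by-case classification of finite subgroups of $\SL_2(\CC)$ into cyclic, binary dihedral, and binary polyhedral groups; the essential input beyond Theorem~\ref{McKay-surjection-theorem} is the simply-laced identity $\delta^{(e)} = \delta^\vee$, which is already in hand, so I do not anticipate any real technical obstacle --- the proof is essentially an assembly of facts already established in the excerpt.
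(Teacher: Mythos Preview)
Your proposal is correct and follows essentially the same approach as the paper: reduce to showing $|\KG|=|\widehat G|$, identify $|\KG|$ with the index of connection $f$ via $\KG\cong P(\Phi)/Q(\Phi)$, and count $|\widehat G|$ as the number of indices $i$ with $\delta^{(e)}_i=1$ using the simply-laced identity $\delta^{(e)}=\delta=\delta^\vee$ together with Proposition~\ref{minuscule-dominant-properties}(a),(b). The paper's proof is the same argument, phrased slightly more tersely.
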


\begin{proof}[Proof of Theorem~\ref{classical-McKay-abelianization-theorem}.]
It suffices to show $|\KG|=|\widehat{G}|$.
As discussed above,  $\KG=P(\Phi)/Q(\Phi)$,
and hence $|\KG|=f$,
the {\it index of connection} for $\Phi$.
Section~\ref{root-order-section}
asserted
that $f-1$ is the number of indices
$i=1,2,\ldots,\ell$ for which $\delta^\vee_i=1$ in 
\eqref{minuscule-dominant-characterization}.
Since $\delta^\vee=\delta=\delta^{(e)}$,
it must be that $f-1$ is the number of $i$
for which $\delta^{(e)}_i=1$, that is, the 
number of one-dimensional characters in $\widehat{G} \setminus \{\one_G\}$.
Thus $|\widehat{G}|=f=|\KG|$.
\end{proof}


\hspace{-.2cm} \begin{tabular}{|c|c|c|c|} \hline
$\begin{matrix} \hbox{\rm root} \\
\hbox{\rm system} \ \Phi 
\end{matrix}$& $\begin{matrix} \hbox{\rm presentation of} \\ G \subset \SL_2(\CC)\end{matrix}$ & 
$\begin{matrix}\KG  \cong P(\Phi)/Q(\Phi) \\
\hspace{-.38cm} \cong G^{\ab} \end{matrix}
$ 
  & $\begin{matrix}  \hbox{\rm affine diagram labeled by} \\ \delta=\delta^\vee=\delta^{(e)}
  \end{matrix}$ \\ \hline\hline
$\AR_{m-1} $& $\langle{r: r^m}=1 \rangle$ & $\ZZ/ m\ZZ$ & 
\xymatrix@C=10pt@R=3pt{
& &1\ar@{-}[dll]\ar@{-}[drr]& & \\
1 \ar@{-}[r] & 1 \ar@{-}[r] & \cdots  \ar@{-}[r] &  1 \ar@{-}[r] &  1  
}
\\ \hline
$\DR_{m+2}$ & $\langle {r,s,t: r^2=s^2=t^m=rst \rangle}$  &$\ZZ/4\ZZ \  \text{ if }m  \text{ odd}$  & \\
       &                                                                    &$\left(\ZZ/2\ZZ\right)^2 \text{ if } m \text{ even}$ & 
\xymatrix@C=10pt@R=3pt{
1 \ar@{-}[dr]& & & & &1 \ar@{-}[dl] \\
& 2 \ar@{-}[r] &  2 \ar@{-}[r] & \cdots \ar@{-}[r]  &  2  &  \\ 
1 \ar@{-}[ur]& & & & &1 \ar@{-}[ul] 
} 
\\ \hline
$\ER_6$  &$\langle {r,s,t: r^2=s^3=t^3=rst} \rangle$ & $\ZZ/3\ZZ$  &
\xymatrix@C=10pt@R=3pt{
& &1 \ar@{-}[d]& & \\
& &2 \ar@{-}[d]& & \\
 1 \ar@{-}[r]  &  2 \ar@{-}[r] &  3 \ar@{-}[r] &  2 \ar@{-}[r] &  1    
}  
\\ \hline
$\ER_7$  &$\langle {r,s,t: r^2=s^3=t^4=rst}\rangle$ & $\ZZ/2\ZZ$ & 
\xymatrix@C=10pt@R=3pt{
& & &2 \ar@{-}[d]& & & \\
 1 \ar@{-}[r]  &  2 \ar@{-}[r] &  3\ar@{-}[r] & 4 \ar@{-}[r] &  3 \ar@{-}[r] &  2 \ar@{-}[r] &  1    
} 
\\ \hline
$\ER_8$  &$\langle {r,s,t: r^2=s^3=t^5=rst} \rangle$ & $0$  & 
\xymatrix@C=10pt@R=3pt{
& & & & & 3 \ar@{-}[d]& & \\
 1 \ar@{-}[r]  &  2 \ar@{-}[r] &  3\ar@{-}[r] & 4 \ar@{-}[r] &  5 \ar@{-}[r] &  6 \ar@{-}[r] &  4  \ar@{-}[r] & 2  
} 
\\\hline
\end{tabular}

\vskip.1in
\noindent
For example, $\AR_{m-1}$ in the table corresponds
to the representation of $\ZZ/m\ZZ$ from 
Example~\ref{cyclic-McKay-example}.

\begin{remark} \rm \ 
\label{Steinberg-abelianization-remark}
We explain next  how Theorem~\ref{classical-McKay-abelianization-theorem}
is Pontrjagin dual 
to a result of Steinberg \cite[\S1(6)]{Steinberg}.

A finite, crystallographic, irreducible root system $\Phi$ has associated
to it various compact real Lie groups $\GGG$ that all share
the same Lie algebra $\ggg$ and root system $\Phi$.  The two extremes
among them are the {\it adjoint group} $\GGG_{\ad}$ and
its simply-connected universal cover $\GGG_{\sc} \rightarrow \GGG_{\ad}$,
a Galois covering with covering group
$$
\pi_1(\GGG_{\ad}) \cong  P(\Phi^\vee)/Q(\Phi^\vee) \cong \mathfrak Z(\GGG_{\sc})
$$
where $\mathfrak Z(\GGG_\sc)$ denotes the center of the group $\GGG_\sc$.
This explains the 
terminology ``fundamental group''; see Br\"ocker-tom Dieck
\cite[Chap. V \S 7, Thms. 7.1, 7.16]{BrockerTomDieck} and
Bourbaki \cite[Chap. IX \S 9, Thm.2ff]{Bourbaki789}.
For a finite group $G \subset \SL_2(\CC)$, Steinberg \cite[\S1(6)]{Steinberg}
describes 
an isomorphism $G^\ab \overset{\sim}{\longrightarrow}  \mathfrak Z(\GGG_{\sc})$.
Applying the contravariant Pontrjagin duality functor $A \mapsto \widehat{A}$
then gives this disguised version of the isomorphism in 
Theorem~\ref{classical-McKay-abelianization-theorem}:
$$
\begin{array}{ccc}
\widehat{ \mathfrak Z(\GGG_\sc)} 
&\overset{\sim}{\longrightarrow} &\widehat{G^\ab}\\
\Vert & &\Vert \\
P(\Phi)/Q(\Phi)& &\widehat{G}\\
\Vert& & \\
\KG
\end{array}
$$
\end{remark}

\begin{remark} \rm \ 
Theorem~\ref{classical-McKay-abelianization-theorem}
disagrees slightly 
with Brylinski \cite[Cor.~5.9]{Brylinski},
which says that the {\it index of connection} $f:=|\KG|$ is
the {\it exponent} of the abelianization $G^\ab$.
For example, in the McKay correspondence for type $\DR_\ell$ with $\ell$ even,  
the above table shows $\KG=G^\ab=(\ZZ/2\ZZ)^2$,
so that one has $f=4$, but the exponent of $G^\ab$ is $2$.  
\end{remark}

\begin{remark} \rm \
In McKay's original setting of $G \hookrightarrow \SL_2(\CC)$, 
the interpretation of
minuscule dominant weights from Proposition~\ref{minuscule-dominant-properties}\,(a)
enables us to reinterpret the  assertion of 
Theorem~\ref{Cartan-recurrents-theorem}(i) as follows:
the (nonzero) superstable configurations for $C$ are exactly
the basis vectors $e_i$ corresponding to the (nontrivial)
$1$-dimensional characters $\chi_i$ of $G$.  
In light of Theorem~\ref{abelianization-theorem},  this suggests the following question.

\begin{question} \rm \
Given a faithful representation $G \hookrightarrow \SL_n(\CC)$, and
its McKay-Cartan matrix $C$,
do the basis vectors $e_i$ corresponding to the nontrivial
one-dimensional characters $\chi_i$ of $G$ always form a subset
of the superstable configurations for $C$?
\end{question}

\end{remark}

\subsection{Example:  abelian groups}
\label{abelian-groups-section}

We explain here why any faithful representation 
$\gamma: G \   \hookrightarrow \ \GL_n(\CC)$ of a finite {\it abelian} group $G$
always  has critical group $\KG$ equal to the usual critical group $K(D)$ for the 
directed graph $D$ which is its McKay quiver, as in Remark~\ref{digraph-remark}.

For $G$ abelian, all irreducible representations of $G$
are one-dimensional, that is, 
$\{\chi_0,\chi_1,\ldots,\chi_\ell\}=\widehat{G}$.
Evaluating the characters on both sides of 
$\chi_\gamma \cdot \chi_i = \sum_{j=0}^\ell m_{ij} \chi_j$  at $e$ 
shows that  
$
n=\sum_{j=0}^\ell m_{ij}
$
is the outdegree of $\chi_i$ for each $i$  in the McKay quiver considered
as a digraph $D$.
Thus,  the extended Cartan matrix $\tilde{C}$ is the same as the 
Laplacian matrix $\tilde{L}$ for $D$.  
In fact, $D$ can be considered as the {\it Cayley digraph}
for $\widehat{G}$ in which each potential generator $\chi_j$ is given
multiplicity $m_{0j}$ if $\chi_\gamma = \sum_{j=0}^\ell m_{0j} \chi_j$.  One
can check that the set $\{\chi_j: m_{0j} \geq 1\}$ actually {\it does}
generate $\widehat{G}$ if and only if $\gamma$ is faithful.
In particular, all vertices in $D$ look the same, up to translation
by elements of $\widehat{G}$, so that
the choice of the row/column in $\tilde{C}$ to strike out 
in forming $C$ is immaterial.  

\begin{example} \rm \
Examples~\ref{cyclic-McKay-example} and \ref{cyclic-non-SL-example}
both had $G=\langle g: g^m =e\rangle \cong \ZZ/m\ZZ$,
and therefore $\tilde{C}=\tilde{L}$
for a circulant digraph $D$ on vertex set $\{0,1,\ldots,m-1\}$.
\begin{itemize}
\item In the classical McKay case of type $\AR_{m-1}$ in  Example~\ref{cyclic-McKay-example},
where
$
g  \mapsto  
  \left[ \begin{smallmatrix} 
   \omega_m & 0 \\ 0 & \omega_m^{-1} 
  \end{smallmatrix} \right],
$
one has $\chi_\gamma = \chi_1 + \chi_{m-1}$,
and hence the digraph $D$ has both arrows  $\chi_i \rightarrow \chi_{i+1}$
and $\chi_{i+1} \rightarrow \chi_{i}$ for each $i=0,1,2,\ldots,m-1$ (subscripts 
modulo $m$).
\item In Example~\ref{cyclic-non-SL-example},
where
$
g  \mapsto \omega_m I,
$
one has $\chi_\gamma = n \chi_1$,
and hence the digraph $D$ has $n$ copies of the same arrow 
$\chi_i \rightarrow \chi_{i+1}$ for each $i=0,1,2,\ldots,m-1$  (subscripts modulo $m$).
\end{itemize}
\end{example}

Interestingly, in the case where $G$ is abelian, one can resolve
Question~\ref{isomorphism-question}(i).
The authors are grateful to S. Koplewitz for providing a proof
of the following proposition.

\begin{proposition}
\label{abelian-isomorphism-characterization}
Let $G$ be a finite abelian group,
and $\gamma: G \hookrightarrow \SL_n(\CC)$
be a faithful representation with no copies of $\one_G$.
Then $\KG \cong \widehat{G}$ if and only if $n=2$, that is,
if and only if $G\cong \ZZ/ m\ZZ$ with $G \subset \SL_2(\CC)$
as in McKay's type $\AR_{m-1}$.
\end{proposition}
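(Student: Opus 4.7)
My plan is to prove the two directions of the if-and-only-if separately.

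For $(\Leftarrow)$: if $n=2$, the abelian hypothesis decomposes $\chi_\gamma=\chi+\chi'$ with $\chi,\chi'\in\widehat{G}\setminus\{\one_G\}$; the condition $\gamma:G\to\SL_2(\CC)$ yields $\chi\chi'=\det_\gamma=\one_G$, forcing $\chi'=\chi^{-1}$; faithfulness then forces $\widehat{G}=\langle\chi\rangle\cong\ZZ/m\ZZ$ to be cyclic, and $\gamma$ is the standard $2$-dimensional representation of McKay's type $\AR_{m-1}$ case, so Theorem~\ref{classical-McKay-abelianization-theorem} supplies $\KG\cong\widehat{G^{\ab}}=\widehat{G}$.

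For $(\Rightarrow)$: assume $\KG\cong\widehat{G}$; I want to derive $n=2$. By Corollary~\ref{isomorphism-implies-trivial-rng}, $I(\gamma)^2=0$. I would unwind this in $R(G)\cong\ZZ[\widehat{G}]$ using the standard isomorphism $I(G)/I(G)^2\cong\widehat{G}$ (sending $a-1\mapsto a$), which identifies the surjection $\KG\cong I(\gamma)\twoheadrightarrow\widehat{G}$ of Theorem~\ref{McKay-surjection-theorem} with the quotient $I(\gamma)\twoheadrightarrow I(\gamma)/I(\gamma)^2$. The hypothesis then forces the ideal equality
$$(n-e_\gamma)\,R(G)\;=\;I(G)^2$$
in $R(G)$, making $I(G)^2$ a principal ideal.

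I would next show that principality of $I(G)^2$ forces $\widehat{G}$ to be cyclic: if some prime $p$ divides $|\widehat{G}|$ with the $p$-Sylow non-cyclic, then in the local factor of $\FF_p[\widehat{G}]$ corresponding to that $p$-Sylow, the image of $I(G)^2/I(G)^3$ contains the three independent classes $(a-1)^2,(a-1)(b-1),(b-1)^2$ (for two independent generators $\bar a,\bar b$ of the $p$-Sylow modulo $p$), contradicting principality of $I(G)^2$ via Nakayama's lemma. With $\widehat{G}=\langle a\rangle\cong\ZZ/m\ZZ$ cyclic, $I(G)^2=(a-1)^2R(G)$; writing $\chi_\gamma=\sum_{j=1}^n a^{k_j}$ with $\sum_j k_j\equiv 0\pmod m$, I would expand $n-e_\gamma$ in powers of $u=a-1$ modulo $(1+u)^m=1$ and identify the generator condition for $(n-e_\gamma)R(G)=(a-1)^2R(G)$ as requiring a specific $\pm 1$ value (a unit in $\ZZ/m\ZZ$) of the reduced $u^2$-coefficient of $n-e_\gamma$, which I would check holds only when $n=2$ with $\{k_1,k_2\}=\{k,-k\}$.

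The main obstacle I anticipate is this last arithmetic verification in the cyclic case (controlling the $u^2$-coefficient after the $(1+u)^m=1$ reduction). As a cleaner alternative, I would use the eigenvalue description of $\tilde C$ (Proposition~\ref{McKay-review-prop}(a)) together with the matrix-tree theorem (Remark~\ref{digraph-remark}) to obtain the order formula $|\KG|=\frac{1}{|G|}\prod_{g\neq e}(n-\chi_\gamma(g))$, reducing the problem to showing $\prod_{g\neq e}(n-\chi_\gamma(g))>|G|^2$ strictly whenever $n\geq 3$, which immediately yields $|\KG|>|G|$ and hence $\KG\not\cong\widehat{G}$.
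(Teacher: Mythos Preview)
Your $(\Leftarrow)$ direction is clean and correct; the paper also defers this direction to Theorem~\ref{classical-McKay-abelianization-theorem}.

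For $(\Rightarrow)$, both of your approaches are genuinely different from the paper's, and both stall at their final step. In your ring-theoretic route, the reduction $\KG\cong\widehat{G}\ \Longleftrightarrow\ (n-e_\gamma)R(G)=I(G)^2$ is correct. However, the Nakayama argument that principality of $I(G)^2$ forces $\widehat{G}$ cyclic breaks at $p=2$: in $\FF_2[(\ZZ/2\ZZ)^2]\cong\FF_2[u,v]/(u^2,v^2)$ one has $(a-1)^2=u^2=0=(b-1)^2$, so your ``three independent classes'' in $I^2/I^3$ collapse to one, and $\bar I^2=(uv)$ \emph{is} principal. One must therefore argue integrally (or separately at $p=2$), and you still owe the cyclic-case arithmetic you flag as an obstacle. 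In your eigenvalue route, the formula $|\KG|=\frac{1}{|G|}\prod_{g\neq e}(n-\chi_\gamma(g))$ is valid (the Cayley digraph is Eulerian, so the matrix-tree count applies), but the resulting inequality $\prod_{g\neq e}(n-\chi_\gamma(g))>|G|^2$ for $n\geq 3$ is not obvious: the factors are complex and individual moduli can be small, so one needs a genuine argument, not just a reduction.

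The paper takes a third, purely combinatorial route. It rephrases the question via Remark~\ref{digraph-remark} as: for a Cayley digraph of a finite abelian group $A=\widehat{G}$ on a generating multiset $\{a_1,\ldots,a_r\}\subset A\setminus\{0\}$ with $\sum a_i=0$, the number of arborescences toward $0$ strictly exceeds $|A|$ whenever $r\geq 3$. It then uses the existence of Hamiltonian paths in abelian Cayley digraphs to build an explicit injection from $A$ into the set of arborescences (sending each $b_i$ on a fixed Hamilton path to an arborescence containing the terminal segment from $b_i$ but avoiding the arc $b_{i+1}\to b_i$), and checks this injection misses at least one arborescence when $r\geq 3$. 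This sidesteps both the local-ring analysis and the analytic inequality entirely.
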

\begin{proof}[Sketch proof.]
Recall that Theorem~\ref{abelianization-theorem} 
provides a surjection $\KG \twoheadrightarrow \widehat{G}$.
Since for digraphs $D$, one has an interpretation 
(see Remark~\ref{digraph-remark}) for $|K(D)|$ as the number of arborescences
in $D$ directed toward the vertex $\chi_0$,  this proposition can be rephrased as follows:   
Given a 
\begin{itemize}
\item finite abelian group $(A,+)$ \  (corresponding to $A=\widehat{G}$),
\item a multiset $a_1,\ldots,a_r$ of $A\setminus \{0\}$ \ 
(corresponding to $\chi_\gamma=\sum_{i=1}^r \chi_{a_i}$
not containing the character $\chi_0= \one_G$),
\item with  $\{a_i\}_{i=1}^r$ \  generating \  $A$ \ 
(corresponding to $\chi_\gamma$ being faithful),
\item and $\sum_{i=1}^r a_i=0$ 
(corresponding to $\gamma(G) \subset \SL_n(\CC)$), 
\end{itemize}
then the associated Cayley digraph for $(A,(a_i)_{i=1}^r)$ has
the number of arborescences directed toward $0$ strictly greater than $|A|$
whenever $r \geq 3$.  

To see this, use the fact (see, e.g., \cite[Thm. 3.3]{GallianWitte}) 
that any Cayley digraph $D$ for an abelian group $A$ has a (directed)
Hamilton path 
$
(b_{|A|} \rightarrow \cdots \rightarrow b_2 \rightarrow b_1 \rightarrow b_0:=0),
$
and any element $a_j$ of 
the generating multiset $\{a_i\}_{i=1}^r$ can be chosen as the label
$a_j=b_1-b_0$ on the last step $b_1 \rightarrow b_0$.
The idea is then to construct a map $f$ from $A$ 
to the set of arborescences that
takes $b_i$ to an arborescence containing the subpath 
$(b_i \rightarrow \cdots \rightarrow b_2 \rightarrow b_1 \rightarrow b_0:=0)$
but not containing the arc $b_{i+1} \rightarrow b_i$.  Roughly speaking,
 there is flexibility to do this,  since the condition
$\sum_{i=1}^r a_i=0$ implies that the generator $a_j:=b_{i+1}-b_i$
is redundant for generating $A$, and hence partial arborescences
can be completed to full arborescences avoiding arcs 
(such as $b_{i+1} \rightarrow b_i$) labeled by $a_j$.  
This gives an injective map $f$, which one 
can show is {\it not} surjective for $r \geq 3$.
\end{proof}

\end{document}